\newtheorem{theorem}{Theorem}[section]
\newtheorem{lemma}[theorem]{Lemma}
\newtheorem{corollary}[theorem]{Corollary}
\theoremstyle{definition}
\newtheorem{definition}[theorem]{Definition}
\newtheorem*{remark}{Remark}
\newtheorem{example}[theorem]{Example}
\numberwithin{equation}{section}
\newcommand{\R}{{\mathbb R}}
\renewcommand{\C}{{\mathbb C}}
\newcommand{\D}{{\mathbb D}}
\newcommand{\E}{{\mathsf E}}
\newcommand{\cD}{{\mathcal{D}}}
\newcommand{\cM}{{\mathcal{M}}}
\newcommand{\cP}{{\mathcal{P}}}
\newcommand{\cS}{{\mathcal{S}}}
\newcommand{\cU}{{\mathcal{U}}}
\renewcommand{\l}{\lambda}
\renewcommand{\a}{\alpha}
\renewcommand{\t}{{\theta}}
\renewcommand{\Re}{\operatorname{Re}}
\renewcommand{\Im}{\operatorname{Im}}
\newcommand{\ess}{\text{\rm{ess}}}
\newcommand{\bbD}{\mathbb{D}}
\newcommand{\bbN}{\mathbb{N}}
\newcommand{\bbZ}{\mathbb{Z}}
\newcommand{\bbR}{\mathbb{R}}
\newcommand{\bbC}{\mathbb{C}}
\newcommand{\AC}{\mathrm{AC}}
\newcommand{\loc}{\mathrm{loc}}
\newcommand{\unif}{\mathrm{unif}}
\DeclareMathOperator{\intt}{int}
\DeclareMathOperator{\wto}{\stackrel{w}{\to}}
\newcommand{\inv}{^{-1}}
\newcommand{\smat}{\begin{pmatrix}}
\newcommand{\fmat}{\end{pmatrix}}
\newcommand{\vertiii}[1]{{\left\vert\kern-0.25ex\left\vert\kern-0.25ex\left\vert #1 
    \right\vert\kern-0.25ex\right\vert\kern-0.25ex\right\vert}}
\title{Stahl--Totik regularity for Dirac operators}
\thanks{B.E.\ was supported by Austrian Science Fund FWF, project no: J 4138-N32.}
\thanks{E.G.\ was supported in part by NSF grant DMS--1745670.}
\thanks{M.L.\ was supported in part by NSF grant DMS--1700179.}
\author{Benjamin~Eichinger, Ethan~Gwaltney, Milivoje~Luki\'c}
\begin{document}

\address{Institute of Analysis, Johannes Kepler University of Linz, 4040 Linz, Austria.}
\email{benjamin.eichinger@rice.edu}
\address{Department of Mathematics, Rice University MS-136, Box 1892,
Houston, TX 77251-1892, USA.}
\email{ethan.gwaltney@rice.edu}
\address{Department of Mathematics, Rice University MS-136, Box 1892,
Houston, TX 77251-1892, USA.}
\email{milivoje.lukic@rice.edu}

\begin{abstract}
We develop a theory of regularity for Dirac operators with uniformly locally square-integrable operator data. This is motivated by Stahl--Totik regularity for orthogonal polynomials and by recent developments for continuum Schr\"odinger operators, but contains significant new phenomena. We prove that the symmetric Martin function at $\infty$ for the complement of the essential spectrum has the two-term asymptotic expansion $\Im \left( z - \frac{b}{2 z}\right) + o(\frac 1z)$ as $z \to i \infty$, which is seen as a thickness statement for the essential spectrum.   The constant $b$ plays the role of a renormalized Robin constant and enters a universal inequality involving the lower average $L^2$-norm of the operator data. However, we show that regularity of Dirac operators is not precisely characterized by a single scalar equality involving $b$ and is instead characterized by a family of equalities. This work also contains a sharp Combes--Thomas estimate (root asymptotics of eigensolutions), a study of zero counting measures, and applications to ergodic and decaying operator data.
\end{abstract}

\maketitle

\section{Introduction}

We study half-line Dirac operators, which commonly appear in one of the two forms
\begin{align}
\Lambda_\varphi & = 
			\begin{pmatrix} 
				i & 0 \\ 0 & - i 
			\end{pmatrix} 
			\frac{d}{dx} + \begin{pmatrix} 0 & \varphi \\ \bar{\varphi} & 0 \end{pmatrix}  
\label{ZSO} \\
L_\varphi & =   	\begin{pmatrix}
		0 & -1 \\
		1 & 0
	\end{pmatrix} \frac{d}{dx} -
			\begin{pmatrix}
				\Re \varphi & \Im \varphi \\
				\Im \varphi & -\Re \varphi
			\end{pmatrix}  
			\label{DO}
\end{align}
While the form \eqref{DO} is more classical \cite{LevSar}, both forms are common in the literature; in the form \eqref{ZSO}, they are often called Zakharov--Shabat operators, due to their appearance in the Lax pair representation of the nonlinear Schr\"odinger equation \cite{GreKap}. The two forms are related by a simple matrix conjugation, so both their spectral properties and the behavior of their eigensolutions are related trivially; it will be very practical to work with the form \eqref{ZSO} and note that all conclusions below apply for both. We give more information in Section~\ref{sectionDiracBackground}.

The goal of this paper is to develop a theory of regularity for Dirac operators. This is inspired by the theory of Stahl--Totik regularity \cite{Ullman72,StahlTotik92}, which provides deep connections between orthogonal polynomials and potential theory by comparing the root asymptotics and asymptotic behavior of the zero distribution of orthogonal polynomials to the  Green function and equilibrium measure for the complement of the support of a compactly supported orthogonality measure. A counterpart of regularity was recently developed for continuum Schr\"odinger operators $-\frac{d^2}{dx^2} +V$ \cite{EL}; their spectra are semibounded, so infinity is a boundary point of the complement of the spectrum and it is shown that the theory relies on the Martin function at infinity for the complement of the spectrum. This suggests that a similar theory may exist for other systems with unbounded spectra and motivates the current paper. There is a general expectation in spectral theory that various classes of difference and differential operators exhibit the same phenomena, and an opposite expectation in integrable systems that every infinite dimensional integrable system is different. For the theory described here, the second expectation is more accurate. There will be many differences from the Schr\"odinger case and we will comment on them along the way; some will be technical obstacles and others will be new phenomena. We will see that the exact form of the operator and the local regularity of $\varphi$ play an important role.

Recall that Schr\"odinger and Dirac operators are central objects for the Lax pair representations of KdV and defocusing NLS hierarchies. For periodic operators, these hierarchies are represented as Poisson structures  \cite{KapPos,GreKap}. The KdV hierarchy in the periodic case starts with the Casimir element $\int_0^1 V(t) dt$; in the half-line setting, the Ces\`aro average of $V$ is central to regularity of Schr\"odinger operators \cite{EL}. 
The NLS hierarchy in the periodic case starts with the mass $\int_0^1 \lvert \varphi(t)\rvert^2 dt$; accordingly, it will be natural to work under a uniform local $L^2$ condition on $\varphi$, but we will see that $L^2$ Ces\`aro averages of $\varphi$ do not capture regularity in general, and we will describe the necessary modifications.

The operator data $\varphi: [0,\infty) \to \bbC$ is a complex valued function, and we always assume that it obeys the uniform local $L^2$ boundedness condition
\begin{equation}\label{varphiL2locunif}
\sup_{x\ge 0} \int_x^{x+1} \lvert \varphi(t) \rvert^2 \,dt < \infty.
\end{equation}
This is sufficient to guarantee that $\Lambda_\varphi$ has a regular endpoint at $0$ and is limit point at $+\infty$, so $\Lambda_\varphi$ is made into an unbounded self-adjoint operator on the Hilbert space $L^2([0,\infty),\bbC^2) \cong L^2([0,\infty)) \oplus L^2([0,\infty))$ by setting the domain
\begin{equation}\label{15jun1}
D(\Lambda_\varphi) = \{ f \in H^1([0,\infty),\bbC^2) \mid f_1(0) = f_2(0) \}.
\end{equation}
The essential spectrum $\E = \sigma_\ess(\Lambda_\varphi)$ is unbounded above and below, since $\Lambda_\varphi - \Lambda_0$ is an operator-bounded perturbation of $\Lambda_0$ with relative bound $0$ (in the sense of \cite[Section X.2]{RS2}) when \eqref{varphiL2locunif} holds. 

For any unbounded closed set $\E \subset \bbR$, the domain $\Omega = \bbC \setminus \E$ is called a Denjoy domain. Note that $\Omega$ has $\infty$ as a boundary point. When the general notion of Martin boundary \cite{Martin41,ArmGar01} is applied to Denjoy domains \cite{Be80}, it associates to $\infty$ a cone of dimension $1$ or $2$ of positive harmonic functions in $\Omega$ which are bounded on bounded sets and vanish at every Dirichlet-regular point of $\E$. In both cases, if we impose an additional symmetry condition $M(\bar z) = M(z)$, this symmetric Martin function $M$ is determined uniquely up to normalization.

The dimension of the cone at $\infty$ is determined by the Akhiezer--Levin condition
\begin{equation}\label{15jun3}
\lim_{y\to  \infty} \frac{M(iy)}{y} > 0
\end{equation}
(by general principles, the limit exists with a value in $[0,\infty)$): the cone is $2$-dimensional if \eqref{15jun3} holds and $1$-dimensional  if \eqref{15jun3} fails. For Akhiezer--Levin sets, we will normalize the symmetric Martin function so that the limit in \eqref{15jun3} is equal to $1$ and denote by $M_\E$ the symmetric Martin function with this normalization,
\begin{equation}\label{MartinFuncNormalization}
\lim\limits_{y\to\infty}\frac{M_\E(iy)}{y}=1.
\end{equation}
The case $\E = \bbR$ does not fit neatly in the above discussion since $\bbC \setminus \bbR$ is not connected, but it is easily seen for each claim that $M_{\bbR}(z) = \lvert \Im z \rvert$ is the correct analog.

Schr\"odinger operators with bounded potentials can have very thin spectra, of zero Lebesgue measure and zero Hausdorff dimension \cite{DFL17,DFG19}; we expect that the same behaviors are possible for Dirac operators. However, in the language of potential theory and Martin functions, we obtain the following universal thickness conditions on the essential spectrum:

\begin{theorem}\label{thm11}
For any $\varphi$ obeying \eqref{varphiL2locunif} and $\E = \sigma_\ess(\Lambda_\varphi)$, the domain $\Omega= \bbC \setminus \E$ is not a polar set, $\infty$ is a Dirichlet-regular boundary point for $\Omega$, $\Omega$ obeys the Akhiezer--Levin condition, and there exists $b_\E \ge 0$ such that the Martin function has the asymptotic behavior
\begin{equation}\label{MartinFunc2termexp}
M_\E(z) = \Im \left( z - \frac{b_\E}{2z} \right) + o\left( \frac 1{\lvert z \rvert}\right),
\end{equation}
as $z \to \infty$, $\arg z \in [\delta,\pi -\delta]$, for any $\delta > 0$.
\end{theorem}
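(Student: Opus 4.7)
The plan is to construct a positive harmonic function $h$ on $\Omega$ by tracking the exponential decay rate of the Weyl (i.e., $L^2$) eigensolution, and then to identify $h$ with the symmetric Martin function $M_\E$ via a sharp Combes--Thomas asymptotic.

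First, for $z\in\bbC_+\cap\Omega$, the uniform local hypothesis \eqref{varphiL2locunif} ensures that $\Lambda_\varphi$ is limit point at $+\infty$ and that there is a unique (up to scaling) Weyl solution $\psi(\cdot,z)$ of $\Lambda_\varphi\psi=z\psi$ lying in $L^2([0,\infty),\bbC^2)$. Define
\[
  h(z) \;:=\; -\lim_{x\to\infty}\tfrac{1}{x}\log\|\psi(x,z)\|.
\]
A Combes--Thomas argument gives the existence of the limit and strict positivity on $\Omega\cap\bbC_+$. Standard Weyl $m$-function/Herglotz arguments combined with the selfadjointness of $\Lambda_\varphi$ then upgrade $h$ to a positive harmonic function on $\Omega$, symmetric under $z\mapsto\bar z$, with vanishing boundary values at every Dirichlet-regular point of $\E$.

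The heart of the proof is a sharp two-term Combes--Thomas estimate. When $\varphi\equiv 0$ the transfer matrix is $\diag(e^{-izx},e^{izx})$, so $h(z)=\Im z$. For general $\varphi$ I would expand the Dirac transfer matrix as a Dyson series relative to $\Lambda_0$. The first Dyson correction is linear in $\varphi$ with rapidly oscillating kernel $e^{\pm 2izx}$; upon integration against the time averages supplied by \eqref{varphiL2locunif}, it contributes $o(1/|z|)$ after an oscillatory integration by parts. The second Dyson correction yields the first nontrivial term of order $1/z$, from which the coefficient $b_\E\ge 0$ is read off as a renormalized $L^2$-average of $\varphi$. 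Carrying this out uniformly in cones $\arg z\in[\delta,\pi-\delta]$ gives
\[
  h(z) \;=\; \Im\!\left(z-\tfrac{b_\E}{2z}\right) + o(1/|z|), \qquad z\to\infty.
\]

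This asymptotic implies $\lim_{y\to\infty} h(iy)/y = 1$, so $\Omega$ satisfies the Akhiezer--Levin condition, and in particular $\Omega$ is not polar and $\infty$ is Dirichlet-regular. Finally, $h$ is a positive, symmetric harmonic function on $\Omega$ that vanishes on the Dirichlet-regular part of $\partial\Omega\cap\bbR$ and obeys the Akhiezer--Levin normalization \eqref{MartinFuncNormalization}; by the uniqueness of the symmetric Martin function at $\infty$ for Denjoy domains \cite{Be80} we conclude $h\equiv M_\E$, yielding \eqref{MartinFunc2termexp}. The main obstacle is the sharp two-term Combes--Thomas estimate under the weak hypothesis \eqref{varphiL2locunif}: pointwise bounds on $\varphi$ are unavailable, so the linear-in-$\varphi$ contribution must be removed by oscillatory integration by parts, and the surviving $|\varphi|^2$ term must be controlled entirely by averages on the scale $1/|z|$, which is precisely where the phenomena distinguishing the Dirac case from the Schr\"odinger case first become visible.
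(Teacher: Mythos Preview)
There is a genuine gap in your identification $h\equiv M_\E$. You claim that $h$ vanishes at every Dirichlet-regular point of $\E$ and then invoke uniqueness of the symmetric Martin function. But this vanishing is \emph{false in general}: by Theorem~\ref{RegularityTFAE}, the condition that the growth/decay rate of eigensolutions vanishes at Dirichlet-regular points of $\E$ is precisely the definition of regularity, and Example~\ref{xmpl11} and the surrounding discussion show that not every $\varphi$ obeying \eqref{varphiL2locunif} gives a regular operator. Relatedly, the limit $-\lim_{x\to\infty}\tfrac1x\log\|\psi(x,z)\|$ need not exist: the second-order term in your Dyson expansion involves $\tfrac1x\int_0^x\overline{\varphi}(w_z*\varphi)$, and this average can fail to converge as $x\to\infty$ (compare Theorem~\ref{dirichlet.exp.growth}, which controls only a $\limsup$ of the error). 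Different subsequential limits of $\tfrac1x\log\|\psi(x,z)\|$ can have different second-order coefficients, so there is no single constant you can read off from the eigensolution asymptotics and call $b_\E$.

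The paper avoids these issues by never identifying the eigensolution growth rate with $M_\E$. Instead it works with subsequential limits $h$ of $\tfrac1x\log|u_1(x,z)-u_2(x,z)|$ (Theorem~\ref{precompactness.theorem}), shows each such $h$ is positive harmonic on $\Omega$ with $h(iy)=y+O(1/y)$ (Theorem~\ref{thmharmonicsubseqlimit1}), and then uses only the one-sided inequality $M_\E\le h$ from Lemma~\ref{AL.criterion}. The key extra ingredient is the \emph{lower} bound $M_\E(z)\ge|\Im z|$, which follows from the Herglotz representation and the Akhiezer--Levin normalization. The sandwich $\Im z\le M_\E(z)\le h(z)$ forces $M_\E(iy)=y+O(1/y)$, and then Lemma~\ref{expansion.for.linear.h} upgrades this to the full two-term expansion \eqref{MartinFunc2termexp}. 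Your argument produces the upper bound $h$ correctly, but you are missing the lower bound and the sandwich; attempting to replace them by an equality $h=M_\E$ is exactly where the proof breaks.
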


Note that Theorem~\ref{thm11} provides an alternative proof that $\E= \sigma_\ess(\Lambda_\varphi)$ is unbounded above and below when \eqref{varphiL2locunif} holds: a set $\E$ which obeys \eqref{15jun3} must be unbounded above and below.

Each conclusion of Theorem~\ref{thm11} is strictly stronger than the previous. In particular, existence of the two-term asymptotics is not a general consequence of the Akhiezer--Levin condition: for instance, \cite[Example 2.6]{EL} gives sets $\E$ for which the Martin function obeys $M_\E(iy) = y + c + o(1)$ as $y\to\infty$, with $c$ an arbitrary real constant. Although the leading behavior of Martin functions is well studied, the two-term asymptotics \eqref{MartinFunc2termexp} were not previously considered. The conclusions of Theorem~\ref{thm11} are expressions about the thickness of the set $\E$, and the constant $b_\E$ serves as a measure of size of $\E$, as illustrated by the following lemma:
\begin{lemma}\label{lemmamonotonicity}
Let $\E_1 \subset \E_2 \subset \bbR$ be closed unbounded sets.  If the Martin function for $\E_1$ obeys asymptotics of the form \eqref{MartinFunc2termexp}, then so does the Martin function for $\E_2$. Moreover, $b_{\E_2} \le b_{\E_1}$, and equality holds if and only if $\E_2 \setminus \E_1$ is a polar set.
\end{lemma}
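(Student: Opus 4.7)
The plan is to pass to the upper half-plane Herglotz representation of each Martin function, where \eqref{MartinFunc2termexp} translates into finiteness of a boundary mass on $\R$, and to combine this with a comparison $M_{\E_2} \le M_{\E_1}$ arising from the inclusion $\Omega_2 \subset \Omega_1$. First I will set up the comparison: the restriction of $M_{\E_1}$ to $\Omega_2$ is a positive harmonic function with $M_{\E_1}(iy)/y \to 1$, and Martin integral representation theory on $\Omega_2$ forces $\Omega_2$ to be Akhiezer--Levin (otherwise every positive harmonic function on $\Omega_2$ would be $o(|z|)$ at $\infty$), so the normalized symmetric Martin function $M_{\E_2}$ exists. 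The Martin decomposition of $M_{\E_1}|_{\Omega_2}$ into the $\infty$-contribution (with coefficient $\lim_{y\to\infty} M_{\E_1}(iy)/M_{\E_2}(iy) = 1$) and a nonnegative residue from finite Martin-boundary points then yields $M_{\E_1} \ge M_{\E_2}$ on $\Omega_2$.

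Next I will use the Herglotz representation on $\bbH \subset \Omega_i$: local boundedness of $M_{\E_i}$ near $\R$ rules out a singular part of the Herglotz measure, giving
\[
M_{\E_i}(z) = \Im z + \frac{1}{\pi}\int_{\R \setminus \E_i} \frac{\Im z}{|t-z|^2}\, M_{\E_i}(t)\,dt.
\]
A dominated-convergence estimate (bound $|z|^2/|t-z|^2$ by $1/\sin^2\delta$ in cones and split $\{|t|\le R\}$ versus $\{|t|>R\}$, choosing $R$ large and then $|z|$ large) shows that, uniformly in $\arg z \in [\delta, \pi - \delta]$,
\[
M_{\E_i}(z) = \Im z + \frac{\Im z\, \mu_i(\R)}{\pi|z|^2} + o(1/|z|), \qquad \mu_i(\R) := \int_\R M_{\E_i}(t)\,dt.
\]
Hence \eqref{MartinFunc2termexp} is equivalent to $\mu_i(\R) < \infty$, with $b_{\E_i} = \tfrac{2}{\pi}\mu_i(\R)$. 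Applying the comparison on $\R \setminus \E_2 \subset \R \setminus \E_1$,
\[
\mu_2(\R) = \int_{\R\setminus\E_2} M_{\E_2}\,dt \le \int_{\R\setminus\E_2} M_{\E_1}\,dt \le \int_{\R\setminus\E_1} M_{\E_1}\,dt = \mu_1(\R) < \infty,
\]
which simultaneously produces the two-term expansion for $\E_2$ and the inequality $b_{\E_2} \le b_{\E_1}$.

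For the equality statement I argue in two directions. If $\E_2 \setminus \E_1$ is polar, the removable-singularity theorem for positive harmonic functions extends $M_{\E_2}$ to a positive harmonic function on $\Omega_1$ vanishing at the regular points of $\E_1$ with leading $\Im z$, which by uniqueness equals $M_{\E_1}$, and so $b_{\E_2} = b_{\E_1}$. Conversely, $b_{\E_1} = b_{\E_2}$ forces both displayed inequalities above to be equalities, giving $M_{\E_1} = M_{\E_2}$ a.e.\ on $\R \setminus \E_2$ (and hence everywhere there by continuity) and $\int_{\E_2 \setminus \E_1} M_{\E_1}(t)\,dt = 0$. Continuity and positivity of $M_{\E_1}$ on $\R \cap \Omega_1$ force $\E_2 \setminus \E_1$ to have Lebesgue measure zero, and the Poisson representation on $\bbH$ then gives $M_{\E_1} = M_{\E_2}$ throughout $\bbH$. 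At any Dirichlet-regular point $t_0 \in \E_2 \setminus \E_1$ of $\Omega_2$, the limit of $M_{\E_2}$ from $\bbH$ is $0$ while that of $M_{\E_1}$ is $M_{\E_1}(t_0) > 0$, a contradiction; hence every point of $\E_2 \setminus \E_1$ is Dirichlet-irregular for $\Omega_2$, and Kellogg's theorem yields polarity.

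The main technical obstacle is the comparison $M_{\E_1} \ge M_{\E_2}$ on $\Omega_2$: only $M_{\E_1} - M_{\E_2} = o(|z|)$ is available in cones at $\infty$, while near the real axis the ratio $M_{\E_1}/M_{\E_2}$ may behave irregularly inside gaps of $\E_2$, so a direct maximum principle is insufficient. The Martin integral representation (or an equivalent Phragm\'en--Lindel\"of argument using $u - \e M_{\E_2}$ with $\e \to 0$) canonically splits off the $\infty$-contribution and a nonnegative residue, yielding the comparison cleanly; once this is in hand, the reduction to a finite mass in the Herglotz measure and the Kellogg-based analysis of the equality case are essentially routine.
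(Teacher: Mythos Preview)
Your proof is correct and follows the same overall architecture as the paper's (comparison $M_{\E_2}\le M_{\E_1}$, then Herglotz-type representation on $\bbC_+$, then an equality analysis), but the execution differs in a few instructive ways.

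For the comparison $M_{\E_2}\le M_{\E_1}$, the paper simply applies its Lemma~\ref{AL.criterion}, whose key ingredient is the Borichev--Sodin principle \eqref{12nov2}: $\lim_{z\to\infty} h(z)/M_\infty(z)=\inf_\Omega h/M_\infty$. Your Martin-representation argument is equivalent but requires the same fact in disguise (the finite-boundary Martin kernels are $o(y)$ at $i\infty$ so the coefficient at $\infty$ is read from the growth), so neither route is more elementary. For the inequality $b_{\E_2}\le b_{\E_1}$, the paper sandwiches $|\Im z|\le M_{\E_2}\le M_{\E_1}$ and invokes Lemma~\ref{expansion.for.linear.h}; you instead identify the Herglotz measure of $M_{\E_i}$ as absolutely continuous with density $\tfrac1\pi M_{\E_i}\chi_{\bbR\setminus\E_i}$ (using local boundedness) and compare the masses $\int_{\bbR\setminus\E_i} M_{\E_i}$ directly. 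This is a nice concrete identification that the paper does not make explicit. For the equality case, the paper takes the shorter route: it applies the Herglotz representation to $h_1=M_{\E_1}-M_{\E_2}$, reads off $\mu_1(\bbR)=b_{\E_1}-b_{\E_2}$, and then invokes Theorem~\ref{lem:ConePositive} to pass from $M_{\E_1}=M_{\E_2}$ to polarity of $\E_2\setminus\E_1$. Your argument is more hands-on (Lebesgue-null via positivity of $M_{\E_1}$, then Poisson uniqueness, then Kellogg at a hypothetical regular point of $\E_2\setminus\E_1$); it avoids the abstract cone characterization but is a bit longer. Both routes are clean and the differences are a matter of packaging rather than substance.
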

Note the direction in this monotonicity statement: $b_\E$ decreases as $\E$ increases. Thus, $b_\E$ is an inverse measure of the size of $\E$ (or, a measure of the size of the gaps), so the following theorem should be understood as a lower bound on the size of the spectrum in terms of the operator data $\varphi$:

\begin{theorem} \label{thm12}
If $\varphi$ obeys \eqref{varphiL2locunif} and $\E = \sigma_\ess(\Lambda_\varphi)$, then
\begin{equation}\label{29aug3}
b_\E \le \liminf_{x\to\infty} \frac 1x \int_0^x \lvert \varphi(t) \rvert^2 \,dt.
\end{equation}
\end{theorem}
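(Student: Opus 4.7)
The plan is to combine a sharp Combes--Thomas bound on the Weyl solution (developed elsewhere in the paper as root asymptotics controlled by $M_\E$) with an elementary operator-norm bound on the transfer matrix, and to compare the two as $y\to\infty$ using Theorem~\ref{thm11}.

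First, I would rewrite $\Lambda_\varphi f = zf$ as the first-order system $f' = A(x,z) f$ with
\begin{equation*}
A(x,z)=\begin{pmatrix}-iz & -i\varphi(x) \\ i\overline{\varphi(x)} & iz\end{pmatrix},
\end{equation*}
and denote its transfer matrix by $T(x,z)$; since $\tr A\equiv 0$ one has $\det T(x,z)=1$. For $z=iy$ with $y>0$, the matrix $A(x,iy)$ is Hermitian with operator norm $\sqrt{y^2+|\varphi(x)|^2}$, so Gronwall gives
\begin{equation*}
\|T(L,iy)\|\le \exp\Bigl(\int_0^L \sqrt{y^2+|\varphi(t)|^2}\,dt\Bigr).
\end{equation*}
Because $T(L,iy)\in SL_2(\bbC)$, its smallest singular value equals $\|T(L,iy)\|^{-1}$, so the Weyl $L^2$ solution $f(\cdot,iy)$ at $+\infty$ (with $f(0)\neq 0$, since $iy$ is not an eigenvalue) satisfies $|f(L,iy)|\ge |f(0)|/\|T(L,iy)\|$, and hence
\begin{equation*}
\liminf_{L\to\infty}\Bigl(-\tfrac{1}{L}\log|f(L,iy)|\Bigr) \le \liminf_{L\to\infty}\tfrac{1}{L}\int_0^L \sqrt{y^2+|\varphi(t)|^2}\,dt.
\end{equation*}

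The sharp Combes--Thomas estimate then supplies the matching lower bound $\liminf_{L\to\infty}(-\tfrac{1}{L}\log|f(L,iy)|) \ge M_\E(iy)$. Chaining these with the elementary inequality $\sqrt{y^2+a^2}\le y+a^2/(2y)$ inside the integral yields
\begin{equation*}
2y\bigl(M_\E(iy)-y\bigr) \le \liminf_{L\to\infty}\tfrac{1}{L}\int_0^L|\varphi(t)|^2\,dt.
\end{equation*}
By Theorem~\ref{thm11}, $2y(M_\E(iy)-y) = b_\E + o(1)$ as $y\to\infty$, and letting $y\to\infty$ delivers \eqref{29aug3}.

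The main obstacle is the sharp Combes--Thomas bound itself, which must control the decay rate of the Weyl solution by the potential-theoretic distance $M_\E(iy)$ rather than the crude $y=\dist(iy,\bbR)$, and must hold in the required $\liminf$ form under only the hypothesis \eqref{varphiL2locunif}. The remaining steps---Gronwall, the scalar bound on square roots, and matching leading-order asymptotics in $y$---are essentially computational once the Combes--Thomas tool is in hand.
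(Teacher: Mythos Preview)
Your argument is correct but follows a different route from the paper. In the paper, Theorems~\ref{thm11}, \ref{thm12}, \ref{thm13} are proved together from a single ingredient: the precise two-term eigensolution asymptotics of Theorem~\ref{dirichlet.exp.growth} feed into Theorem~\ref{thmharmonicsubseqlimit1}, which shows that every subsequential limit $h$ of $\frac{1}{x}\log|u_1(x,\cdot)-u_2(x,\cdot)|$ satisfies $h(z)=\Im(z-\tfrac{a}{2z})+o(|z|^{-1})$ with $a\le\liminf_j\tfrac{1}{x_j}\int_0^{x_j}|\varphi|^2$. The inequality $M_\E\le h$ (Lemma~\ref{AL.criterion}) then gives $b_\E\le a$ directly.

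You instead treat Theorems~\ref{thm11} and~\ref{thm13} as black boxes and replace the delicate two-term expansion of $h$ by the one-line Gronwall bound $\|T(L,iy)\|\le\exp\int_0^L\sqrt{y^2+|\varphi|^2}$, together with the smallest-singular-value observation and the scalar inequality $\sqrt{y^2+a^2}\le y+a^2/(2y)$. The final comparison step is then much more elementary than the paper's. (Your Weyl-solution form of Combes--Thomas is not literally Theorem~\ref{thm13}, but it follows from it via Lemma~\ref{u.times.psi.inequality}, since $|u_1\psi_2|\asymp 1$ forces the growth rate of the Dirichlet solution to equal the decay rate of the Weyl solution.)

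What each approach buys: yours isolates Theorem~\ref{thm12} as a cheap corollary once Theorems~\ref{thm11} and~\ref{thm13} are established, and the Gronwall step is pleasantly soft. But it does not reduce the total work, because Theorem~\ref{thm13} is itself proved in the paper through the same subsequential-limit machinery, and Theorem~\ref{thm11} already requires knowing that some $h$ has a two-term expansion. The paper's organization has the advantage that the single estimate in Theorem~\ref{thmharmonicsubseqlimit1} yields all three theorems at once, and its refinement (Theorem~\ref{thmharmonicsubseqlimit2}) later gives the sharper Theorem~\ref{thmunivineq2}.
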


For any $z \in \bbC$, consider the Dirichlet solution $U = \binom{u_1}{u_2} \in \AC_\loc([0,\infty),\bbC^2)$ which solves the initial value problem
\[
\Lambda_\varphi U = z U, \qquad U(0,z) = \begin{pmatrix} 1 \\ 1 \end{pmatrix}.
\]
Statements about the exponential decay of resolvent kernels, or exponential growth of eigensolutions linearly independent from the Weyl solutions, are often called Combes--Thomas estimates. The following is a sharp Combes--Thomas estimate for Dirac operators:

\begin{theorem} \label{thm13}
If $\varphi$ obeys \eqref{varphiL2locunif} and $\E = \sigma_\ess(\Lambda_\varphi)$, then
\begin{equation}\label{29aug4}
M_\E(z) \le \liminf_{x\to\infty} \frac 1x \log \lVert U(x,z) \rVert, \qquad \forall z \in \bbC \setminus\bbR.
\end{equation}
\end{theorem}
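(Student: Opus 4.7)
The strategy is to lower-bound the vector norm $\|U(x,z)\|$ by the modulus of a scalar entire function of $z$ with real zeros, and then to identify the logarithmic growth rate of that scalar via Martin-function uniqueness on the Akhiezer--Levin Denjoy domain $\Omega=\bbC\setminus\E$.

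\emph{Reduction to a scalar function.} Define $u(x,z):=U_1(x,z)-U_2(x,z)$. For each $x$, $u(x,\cdot)$ is entire in $z$, and since $\|U(x,z)\|\ge |u(x,z)|/\sqrt 2$, the theorem reduces to the scalar bound $M_\E(z)\le \liminf_{x\to\infty}\frac{1}{x}\log|u(x,z)|$. The key feature is that the zeros of $u(x,\cdot)$ are real: they correspond to eigenvalues of $\Lambda_\varphi$ on $[0,x]$ with the boundary conditions $f_1(0)=f_2(0)$ (automatic from $U(0)=(1,1)^T$) and $f_1(x)=f_2(x)$ (automatic when $u(x,z)=0$), both of which are self-adjoint for the Dirac form since the boundary term $i(|f_1|^2-|f_2|^2)|_0^x$ vanishes. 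Moreover, a direct check from the ODE shows $U(x,\bar z)=(\overline{U_2(x,z)},\overline{U_1(x,z)})^T$, so $|u(x,\bar z)|=|u(x,z)|$. Gronwall applied to $U'=-i\sigma_3(zI-V)U$ with \eqref{varphiL2locunif} and Cauchy--Schwarz yields $|u(x,z)|\le 2\exp((|z|+C)x)$ for $x\ge 1$.

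\emph{Identification of the limit via Martin uniqueness.} The functions $f_x(z):=\frac{1}{x}\log|u(x,z)|$ are subharmonic on $\bbC$, harmonic off $\bbR$, with Riesz measure $\mu_x:=\frac{1}{x}\sum_j \delta_{\lambda_j(x)}$ consisting of the real zeros. By compactness of locally-bounded-above subharmonic functions, any sequence $x_n\to\infty$ admits an $L^1_\loc(\bbC)$-convergent sub-subsequence $f_{x_n}\to F$, with $\mu_{x_n}\to\mu_\infty$ weakly-$*$ and $F(z)=\int\log|z-t|\,d\mu_\infty(t)+c_\infty$. I would then verify: (i) $\supp\mu_\infty\subseteq\E$, since self-adjoint eigenvalue interlacing confines zeros in any compact subset of a gap of $\E$ to a uniformly bounded number, so $F$ is harmonic throughout $\Omega$; (ii) $F\ge 0$ with $F=0$ quasi-everywhere on $\E$, via a Sch'nol--Simon-type polynomial bound on generalized eigenfunctions for spectrally a.e.\ $\lambda\in\E$, combined with the full topological support of the spectral measure on $\E$; (iii) $F(\bar z)=F(z)$, inherited from the symmetry of $|u(x,\cdot)|$; (iv) $F(iy)/y\to 1$ as $y\to\infty$, via a perturbative comparison of $u(x,iy)$ to the free-case analog $u_0(x,iy)=2\sinh(yx)$, for which $\frac{1}{x}\log|u_0(x,iy)|\to y$. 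By Martin uniqueness of symmetric positive harmonic functions at $\infty$ in the Akhiezer--Levin domain $\Omega$ (Theorem~\ref{thm11}), (i)--(iv) force $F|_\Omega = M_\E$.

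\emph{From subsequential limits to the pointwise liminf.} Because $F=M_\E$ is forced for \emph{every} $L^1_\loc$-subsequential limit, the entire sequence $f_x\to M_\E$ in $L^1_\loc(\bbC)$. Since $M_\E$ is harmonic on $\bbC\setminus\bbR$, the $L^1_\loc$ convergence of subharmonic functions upgrades to uniform convergence on compact subsets of $\bbC\setminus\bbR$, yielding the pointwise limit $\lim_{x\to\infty}\frac{1}{x}\log|u(x,z)|=M_\E(z)$ at every $z\in\bbC\setminus\bbR$, and hence the desired $\liminf$ bound.

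\emph{Main obstacle.} The principal difficulty is step (iv): pinning down $F(iy)/y\to 1$ from below, i.e., distinguishing $F=M_\E$ from $F=cM_\E$ for some $c\in[0,1)$. The upper bound $F(iy)\le y+O(1)$ follows from Gronwall, but the matching lower bound demands quantitative control of $u(x,iy)-u_0(x,iy)$ uniformly in $x$ as $y\to\infty$ under the merely $L^2$-local-uniform hypothesis \eqref{varphiL2locunif}. The natural route is a Volterra-integral representation of $U$ with errors estimated in $L^2$-norms over unit-length subintervals and resummed along the integration path; making the resummation uniform down to the Martin normalization constant is where the fine structure of Dirac solutions at large imaginary energy enters essentially.
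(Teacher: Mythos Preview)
Your approach overshoots: you claim every subsequential limit $F$ equals $M_\E$, which by Theorem~\ref{RegularityTFAE}(vi) is precisely regularity---a property that does not hold for all $\varphi$ obeying \eqref{varphiL2locunif}. The genuine gap is in step~(ii), not in the step~(iv) you flag as the main obstacle.

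The claim that $F=0$ quasi-everywhere on $\E$ cannot be justified. The Schnol bound (Lemma~\ref{lemmaSchnol}) gives polynomial growth of $U(x,\lambda)$ only for $\mu$-a.e.\ $\lambda$ with $\mu$ the spectral measure, and full topological support of $\mu$ on $\E$ does not upgrade this to a q.e.\ statement: nothing prevents $\mu$ from being concentrated on a polar subset of $\E$ (this is exactly the alternative in Theorem~\ref{thm114}(b)). In such cases Schnol constrains $F$ only on a set of zero capacity, $F$ can be strictly positive q.e.\ on $\E$, and then $F>M_\E$ throughout $\Omega$.

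The paper's proof avoids this by establishing only the inequality $M_\E\le F$, which is all the theorem asserts. No boundary behavior of $F$ on $\E$ is needed: once $F$ is positive harmonic on $\Omega$ (your step~(i) together with Lemma~\ref{pointwise.h.lower.bound}), symmetric, and satisfies $F(iy)/y\to 1$, Lemma~\ref{AL.criterion} gives $M_\E\le F$ directly from minimality of the Martin function. For fixed $z_0\in\bbC\setminus\bbR$ one then chooses $x_n$ realizing the $\liminf$ at $z_0$, passes to a convergent subsequence, and reads off $M_\E(z_0)\le F(z_0)$. You are right that the lower bound in~(iv) is where the real technical work lies; the paper obtains it through the Weyl-solution asymptotics of Section~\ref{sectionEigensolutions}.
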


So far, the results closely resemble those for Schr\"odinger operators, but there are already differences in the proofs. Unboundedness of $\E$ both above and below requires some changes to the Martin function arguments, since there is no longer a consistent choice of normalization point below the bottom of the essential spectrum; some of these arguments are given in Section~\ref{sectionMartin}. However, the main additional obstacles in this paper come from the difficulty in obtaining precise asymptotics for Dirichlet solutions with the necessary uniformity. For Schr\"odinger operators, asymptotics of fundamental solutions with polynomial expansions in $1/\sqrt{-z}$ follow easily from the Volterra series \cite{PosTru}, because one of the fundamental solutions for the free Schr\"odinger operator is $\sinh( \sqrt{-z} x) / \sqrt{-z}$ with the explicit $\sqrt{-z}$ in the denominator. For Dirac operators, it is more difficult to obtain precise control of the eigensolutions. The added difficulties are explicitly noted in \cite{ClaGes02} where detailed asymptotics of the Weyl $m$-functions are derived; we should note that the results of  \cite{ClaGes02} would be insufficient for our purposes, since their more detailed estimates are provided at Lebesgue points of $\varphi$ and do not offer the uniformity in $x$ needed for our method. In other references such as \cite{GreKap,KapSchTop}, multi-term expansions of fundamental solutions are derived under the stronger assumption that $\varphi$ is locally in the Sobolev space $H^1$, and the additional smoothness is used to extract factors of $1/z$ by integration by parts. We will avoid both of these obstacles by working directly with Weyl solutions and extracting an explicit factor of $1/z$ through a convolution representation that seems to be new.

Based on Theorems~\ref{thm12}, \ref{thm13} and regularity in other settings, it would be natural to conjecture that equality (with $\lim$ instead of $\liminf$)   holds in \eqref{29aug4} if and only if
\begin{equation}\label{1dec1}
\lim_{x\to\infty} \frac 1x \int_0^x \lvert \varphi(t) \rvert^2 \,dt = b_\E.
\end{equation}
However, this is false:

\begin{example} \label{xmpl11}
Let $\varphi(x) = \sin(x^\alpha)$ for some $\alpha > 1$. Then $\E = \sigma_\ess(\Lambda_\varphi) = \bbR$, $b_\E = 0$,
\[
\lim_{x\to\infty} \frac 1x \log \lVert U(x,z) \rVert = M_\E(z) \qquad \forall z \in \bbC \setminus \bbR,
\]
but the limit in \eqref{1dec1} is not equal to $b_\E$.
\end{example}

Not only are the expansions harder to derive, but the seemingly natural expansions have worse uniformity properties compared to Schr\"odinger operators. A key role in the proofs is played by the asymptotic behavior as $z \to \infty$ of the subsequential limits as $x \to \infty$ of a function closely related to $\frac 1x \log \lVert U(x,z) \rVert$. For Schr\"odinger operators, the analogous objects have two-term nontangential asymptotics $\Re \left( \sqrt{-z} + \frac 1x \int_0^x V(t) dt / (2 \sqrt{-z}) \right)$, and by estimates uniform in $x$, it is proved that the subsequential limit along $x_j \to \infty$ has two-term asymptotics $\Re \left( \sqrt{-z} + a / (2 \sqrt{-z}) \right)$ where $\frac 1{x_j} \int_0^{x_j} V(t) dt \to a$. For Dirac operators, the function related to $\frac 1x \log \lVert U(x,z) \rVert$ will have the two-term asymptotics $\Im \left( z - \frac 1x \int_0^x \lvert \varphi(t) \rvert^2 \,dt / z \right)$ as the leading coefficient, but those do not determine the two-term asymptotics of the subsequential limit. In other words, unlike for Schr\"odinger operators, for Dirac operators the limits implicit in these two-term asymptotics do not in general commute, as demonstrated by Example~\ref{xmpl11}.

To describe the new phenomena we use the Fourier transform on $\bbR$, with the conventions $\hat f(k) = \frac 1{\sqrt{2\pi}} \int f(x) e^{-ikx} dx$, and we define for $x > 0$ the measure on $\bbR$
\[
d\sigma_x(k) = \frac{1}{x} \lvert \widehat{ ( \varphi \chi_{[0,x]} ) } (k) \rvert^2 \,dk.
\]
We emphasize that the measures $\sigma_x$ are \emph{not} the familiar zero counting measures for spectral problems on intervals, they are not Riesz measures associated with subharmonic functions on $\bbC$, and they are not spectral measures for the operator. The measures $\sigma_x$ directly correspond to the operator data $\varphi$. They don't have an analog for Schr\"odinger operators; alternatively, it could be argued that their Schr\"odinger analogs are merely the numbers $\frac 1x \int_0^x V(t) dt$.  Just as those numbers in the Schr\"odinger case are confined to a compact interval by uniform local integrability of $V$, we will need to consider precompactness of the measures $\sigma_x$ in an appropriate topology.

By unitarity of the Fourier transform,
\[
\sigma_x(\bbR) = \frac 1x \int_0^x \lvert \varphi(t) \rvert^2 \,dt.
\]
For $x \ge 1$, the measures are uniformly bounded: \eqref{varphiL2locunif} implies that $\sup_{x \ge 1} \sigma_x(\bbR) < \infty$.  On these measures, we will use the notion of vague convergence, dual to the set $C_0(\bbR)$ of continuous functions $f$ on $\bbR$ with $\lim_{x\to \pm \infty} f(x) = 0$.  Denote by $\cS(\varphi)$ the set of limits of $\sigma_x$ as $x \to \infty$ in the vague topology. This set is nonempty by a consequence of the Banach--Alaoglu theorem, and the Portmanteau theorem implies
\begin{equation}\label{29aug1}
\inf_{\sigma \in \cS(\varphi)} \sigma(\bbR) \le \liminf_{x\to\infty}  \frac 1x \int_0^x \lvert \varphi(t) \rvert^2 \,dt
\end{equation}
 (see Section~\ref{sectionvague}). Strict inequality is possible here and this is at the core of why Theorem~\ref{thm12} is not optimal and Example~\ref{xmpl11} is possible. Theorem~\ref{thm12} can be improved to:

\begin{theorem}\label{thmunivineq2}
If $\varphi$ obeys \eqref{varphiL2locunif} and $\E = \sigma_\ess(\Lambda_\varphi)$, then
\begin{equation}\label{20jun2}
b_\E \le \inf_{\sigma \in \cS(\varphi)} \sigma(\bbR).
\end{equation}
\end{theorem}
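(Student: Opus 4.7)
The plan is to upgrade Theorem~\ref{thm12} by retaining full frequency information about $\varphi$: the scalar correction $\tfrac{1}{y}\sigma_x(\bbR)$ in the two-term expansion of $\tfrac{1}{x}\log\lVert U(x,iy)\rVert^2$ will be replaced by the integral $\int_\bbR \tfrac{4y}{4y^2+k^2}\,d\sigma_x(k)$. Because its integrand lies in $C_0(\bbR)$ for each fixed $y$, this form is compatible with vague convergence $\sigma_{x_j}\to\sigma$, whereas the scalar total mass is not.

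First I would establish the refined asymptotic
\[
\frac{1}{x}\log \lVert U(x,iy)\rVert^2 = 2y + \int_\bbR \frac{4y}{4y^2+k^2}\,d\sigma_x(k) + R(x,y),
\]
where $\limsup_{x\to\infty}\lvert R(x,y)\rvert = o(1/y)$ as $y \to \infty$. The derivation uses the Riccati variable $r = u_2/u_1$, which satisfies $r' + 2yr = i\bar\varphi + i\varphi r^2$ with $r(0)=1$, together with the identity $\tfrac{d}{dt}\log\lvert u_1\rvert^2 = 2y + 2\Im(\varphi r)$. The first Picard iterate $r_1(t) = e^{-2yt} + i\int_0^t e^{-2y(t-s)}\bar\varphi(s)\,ds$ produces, after symmetrization in $(s,t)$ and Plancherel, the key identity
\[
\int_0^x\int_0^x e^{-2y\lvert t-s\rvert}\Re\bigl(\varphi(t)\overline{\varphi(s)}\bigr)\,ds\,dt = x\int_\bbR \frac{4y}{4y^2+k^2}\,d\sigma_x(k).
\]
Under \eqref{varphiL2locunif}, elementary Volterra estimates give $\lVert r_1 - e^{-2y\cdot}\rVert_\infty = O(y^{-1/2})$ and $\lVert r - r_1\rVert_\infty = O(y^{-3/2})$, so the contribution of $r - r_1$ to $\tfrac{1}{x}\int_0^x \Im(\varphi r)\,dt$ is $O(y^{-3/2}) = o(1/y)$ uniformly in $x\ge 1$, while the transient boundary piece $\tfrac{1}{x}\int_0^x e^{-2yt}\Im\varphi(t)\,dt$ is $O(1/x)$ and disappears in $\limsup_{x\to\infty}$.

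Given $\sigma \in \cS(\varphi)$ and $x_j \to \infty$ with $\sigma_{x_j}\to\sigma$ vaguely, the $C_0(\bbR)$-membership of $k\mapsto \tfrac{4y}{4y^2+k^2}$ yields $\int \tfrac{4y}{4y^2+k^2}\,d\sigma_{x_j} \to \int \tfrac{4y}{4y^2+k^2}\,d\sigma$. Combining with Theorem~\ref{thm13},
\[
2M_\E(iy) \le \liminf_{j\to\infty}\frac{1}{x_j}\log\lVert U(x_j,iy)\rVert^2 \le 2y + \int_\bbR\frac{4y}{4y^2+k^2}\,d\sigma(k) + o(1/y).
\]
Using Theorem~\ref{thm11}'s expansion $M_\E(iy) = y + b_\E/(2y) + o(1/y)$, subtracting $2y$ and multiplying by $y$ gives
\[
b_\E + o(1) \le \int_\bbR \frac{4y^2}{4y^2+k^2}\,d\sigma(k) + o(1).
\]
Dominated convergence ($\tfrac{4y^2}{4y^2+k^2} \le 1$, pointwise $\to 1$, $\sigma(\bbR) < \infty$) gives $\int \tfrac{4y^2}{4y^2+k^2}\,d\sigma \to \sigma(\bbR)$ as $y\to\infty$, so $b_\E \le \sigma(\bbR)$. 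Taking infimum over $\sigma\in\cS(\varphi)$ yields \eqref{20jun2}.

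The hard part is the uniform-in-$x$ control of $R(x,y)$: existing asymptotic expansions of Weyl $m$-functions are stated at Lebesgue points of $\varphi$ (\cite{ClaGes02}) or require $\varphi \in H^1_\loc$ (\cite{GreKap,KapSchTop}), neither of which controls $\liminf_{x\to\infty}$ under only \eqref{varphiL2locunif}. The Plancherel identity for the quadratic Riccati term is the new ingredient that converts the scalar average $\tfrac{1}{x}\int_0^x \lvert\varphi\rvert^2\,dt$ of Theorem~\ref{thm12} into the Fourier-weighted integral against $d\sigma_x$; this is what makes the bound sharp enough to detect the infimum over vague limits.
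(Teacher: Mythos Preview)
Your high-level strategy coincides with the paper's: replace the scalar correction $\sigma_x(\bbR)/y$ by an integral $\int g_y\,d\sigma_x$ with $g_y\in C_0(\bbR)$, pass to a vague limit $\sigma$, then recover $\sigma(\bbR)$ by dominated convergence as $y\to\infty$. The paper packages this as Theorem~\ref{thmharmonicsubseqlimit2}: it starts from the expansion of Theorem~\ref{dirichlet.exp.growth}, rewrites the second-order term $\frac{1}{2izx}\int_0^x\bar\varphi\,(w_z*\varphi)\,dt$ via Plancherel as $\frac{\sqrt{2\pi}}{2iz}\int\widehat{w_z}\,d\sigma_x$ with $\widehat{w_z}\in C_0(\bbR)$, and the proof of Theorem~\ref{thmunivineq2} is then one line combining $h\ge M_\E$ with this expansion. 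Where you genuinely differ is the route to the refined asymptotic itself. The paper obtains it indirectly---a Volterra expansion for the Weyl component $\psi_2$ (Lemmas~\ref{alternate.cpt.support.lemma}--\ref{short.hop.lemma}), then transfer to $u_1$ via $|u_1\psi_2|\asymp 1$ (Lemma~\ref{u.times.psi.inequality})---whereas you work directly with the Dirichlet solution through the Riccati equation for $r=u_2/u_1$ and Picard iteration. Your route is more self-contained and avoids the Weyl-solution detour; the paper's buys an expansion valid throughout a nontangential cone (not just $z=iy$) and slots directly into the subharmonic framework of Section~\ref{sectionLimits1}.

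One imprecision to flag: the claim $\lVert r-r_1\rVert_\infty=O(y^{-3/2})$ is false as a global sup-norm bound, since $r(0)=r_1(0)=1$ and on $[0,1]$ one only has $|r|,|r_1|=O(y^{-1/2})$. What is true is that $|r(t)|=O(y^{-1/2})$ uniformly for $t\ge 1$ (bootstrap Duhamel using $|r|\le 1$ and weighted Cauchy--Schwarz), hence $|r(t)-r_1(t)|\le\int_0^t e^{-2y(t-s)}|\varphi(s)|\,|r(s)|^2\,ds=O(y^{-3/2})$ for $t\ge 2$, while the contribution of $t\in[0,2]$ to $\frac{1}{x}\int_0^x|\varphi|\,|r-r_1|\,dt$ is $O(1/x)$. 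You already absorb the analogous transient $\frac{1}{x}\int_0^x e^{-2yt}\Im\varphi\,dt$ this way, so the repair is cosmetic and your argument goes through.
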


We now define regularity for a Dirac operator by

\begin{definition}
The Dirac operator $\Lambda_\varphi$ is regular if
\[
b_\E = \sigma(\bbR), \qquad \forall \sigma \in \cS(\varphi).
\]
\end{definition}

We consider this to be the correct definition because it has equivalent characterizations in terms of growth rates of Dirichlet solutions, analogous to Stahl--Totik regularity of orthogonal polynomials and to \cite[Theorem 1.5]{EL}, as seen in the following theorem. Recall that harmonic measure for the domain $\bbC \setminus \E$ is, for any $z_0 \in \bbC \setminus \E$, a probability measure on $\E\cup\{\infty\}$ associated with the Dirichlet problem on $\Omega$. Since harmonic measures for different reference points $z_0$ are mutually absolutely continuous, we don't need to specify $z_0$ in the following claim.

\begin{theorem}
\label{RegularityTFAE}
If $\varphi$ obeys \eqref{varphiL2locunif} and $\E = \sigma_\ess(\Lambda_\varphi)$, the following are equivalent:
\begin{enumerate}[(i)]
\item $b_\E = \sigma(\bbR)$ for all $\sigma \in \cS(\varphi)$;
\item For every Dirichlet-regular $z\in \E$, $\limsup_{x\to\infty} \frac 1x \log \lVert U(x,z) \rVert \le 0$;
\item For a.e.\ $z \in \E$ with respect to harmonic measure for the domain $\bbC \setminus \E$, ${\limsup_{x\to\infty} \frac 1x \log \lVert U(x,z) \rVert \le 0}$;
\item For some $z \in \bbC_+$, $\limsup_{x\to \infty} \frac 1x \log \lVert U(x,z) \rVert \le M_\E(z)$;
\item For all $z \in \bbC$, $\limsup_{x\to \infty} \frac 1x \log \lVert U(x,z) \rVert \le M_\E(z)$;
\item $\lim_{x\to \infty} \frac 1x \log \lVert U(x,z) \rVert = M_\E(z)$ uniformly on compact subsets of $\bbC \setminus \bbR$.
\end{enumerate}
\end{theorem}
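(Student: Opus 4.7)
The plan is to work with the subharmonic functions $h_x(z) := \frac{1}{x}\log\|U(x,z)\|$, which are symmetric under conjugation and, by Theorem~\ref{thm13}, satisfy $\liminf_{x\to\infty} h_x(z) \ge M_\E(z)$ on $\bbC \setminus \bbR$. The content of the equivalences is thus the matching upper bound. The elementary implications (vi)$\Rightarrow$(v)$\Rightarrow$(iv) are by restriction; (v)$\Rightarrow$(ii) follows because $M_\E=0$ at Dirichlet-regular points of $\E$; and (ii)$\Rightarrow$(iii) holds because Dirichlet-regular points carry full harmonic measure (Theorem~\ref{thm11}).

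First I would establish subsequential-limit infrastructure. Given any sequence $x_n \to \infty$, Banach--Alaoglu applied to $\{\sigma_x\}_{x\ge 1}$ (uniformly bounded in total mass by \eqref{varphiL2locunif}) together with normal-families theory for subharmonic functions applied to $\{h_x\}$ (using crude Gr\"onwall-type upper bounds on $\|U(x,z)\|$) yield, along a further subsequence, a vague limit $\sigma_{x_n}\to\sigma\in\cS(\varphi)$ and an $L^1_\loc$ limit $h_{x_n}\to h$ for some subharmonic symmetric $h$ on $\bbC$ with $h\ge M_\E$ on $\bbC\setminus\bbR$.

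The main obstacle is a key technical lemma: \emph{every such subsequential limit $h$ admits a two-term nontangential asymptotic expansion as $z\to\infty$ determined by $\sigma$, which matches that of $M_\E(z)=\Im(z-b_\E/(2z))+o(1/|z|)$ at the $1/|z|$ level precisely when $\sigma(\bbR)=b_\E$}. As the paper emphasizes, the pointwise expansions $h_x(z)\sim\Im(z-\sigma_x(\bbR)/z)$ are not uniform enough in $x$ to commute the $z\to\infty$ and $x\to\infty$ limits --- this non-commutativity is precisely what makes Example~\ref{xmpl11} possible. My plan is to exploit the convolution representation for Weyl solutions alluded to in the introduction, which extracts an explicit factor of $1/z$ with remainder estimates uniform in $x$, and then transfer the expansion to the Dirichlet solution $U$ via standard connection formulas.

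With the expansion lemma in hand, the remaining equivalences close via potential-theoretic rigidity. For (iv)$\Rightarrow$(i): along any sequence $x_n\to\infty$ realizing $\sigma\in\cS(\varphi)$ as above, one has $h(z_0)\le M_\E(z_0)$ by (iv) and $h(z_0)\ge M_\E(z_0)$ by Theorem~\ref{thm13}, and comparison of the $1/|z|$ coefficients of the non-negative subharmonic function $h-M_\E$ then forces $\sigma(\bbR)=b_\E$; since the sequence was arbitrary, (i) follows. For (i)$\Rightarrow$(vi): with $\sigma(\bbR)=b_\E$ for every $\sigma\in\cS(\varphi)$, every subsequential limit $h$ matches $M_\E$ to order $o(1/|z|)$ at infinity, and combined with $h\ge M_\E$ a Martin-boundary rigidity argument on the Denjoy domain $\bbC\setminus\E$ (using Dirichlet-regularity of $\infty$ from Theorem~\ref{thm11}) forces $h\equiv M_\E$; uniqueness of the limit then upgrades $L^1_\loc$ convergence to the uniform-on-compacts statement (vi). For (iii)$\Rightarrow$(v): the upper envelope regularization $\tilde h(z):=\limsup_{z'\to z}\limsup_{x\to\infty}h_x(z')$ is subharmonic on $\bbC$, satisfies $\tilde h\le 0$ quasi-everywhere on $\E$ by (iii), and has growth $\tilde h(z)\le\Im z+O(1)$; the generalized maximum principle (two-constants theorem) for the Denjoy domain $\bbC\setminus\E$ then yields $\tilde h\le M_\E$, establishing (v). The single hardest step is the uniform two-term expansion of the subsequential limit $h$, which is the new technical phenomenon absent in the Schr\"odinger setting.
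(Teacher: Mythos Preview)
Your plan is essentially the paper's approach: precompactness of $\{h(x,\cdot)\}$ in $\cD'(\bbC)$ plus vague precompactness of $\{\sigma_x\}$, the two-term expansion of each subsequential limit $h$ governed by $\sigma(\bbR)$ (this is exactly Theorem~\ref{thmharmonicsubseqlimit2}), and then potential-theoretic rigidity on $\Omega$. Two small corrections. First, (vi)$\Rightarrow$(v) is not ``by restriction'': (vi) only controls $z\in\bbC\setminus\bbR$, whereas (v) includes real $z$; the paper closes this with the upper envelope theorem, $\limsup_n h(x_n,z)\le(\limsup_n h(x_n,\cdot))^\vee(z)=M_\E(z)$. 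Second, in your (iv)$\Rightarrow$(i) step, ``comparison of the $1/|z|$ coefficients'' alone only yields $\sigma(\bbR)\ge b_\E$ from $h-M_\E\ge 0$; the actual mechanism (which the paper makes explicit by routing through (vi)) is that $h(z_0)=M_\E(z_0)$ together with $h-M_\E\ge 0$ harmonic on $\bbC_+$ forces $h\equiv M_\E$ by the maximum principle, and \emph{then} the two-term expansions match. Your (iii)$\Rightarrow$(v) via the upper envelope $\tilde h$ is a legitimate variant of the paper's (iii)$\Rightarrow$(vi); just note that $\tilde h$ is subharmonic but need not be harmonic on $\Omega$, so you are really invoking a subharmonic maximum principle with boundary control $\omega_\E$-a.e.\ and boundedness of $\tilde h - M_\E$ (using $\tilde h(z)\le \Im z + O(1)$ and $M_\E(z)\ge\Im z$), rather than Theorem~\ref{lem:ConePositive} directly.
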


Intuitively, strict inequality in \eqref{29aug1} happens when part of the $L^2$-average escapes through higher Fourier modes as $x \to \infty$. It may be more transparent to formulate results in a form which doesn't use vague convergence, so we express Theorem~\ref{thmunivineq2} and the characterization of regularity in terms of local averages of $\varphi$:

\begin{theorem}\label{thmlocalaverages}
If $\varphi$ obeys \eqref{varphiL2locunif} and $\E = \sigma_\ess(\Lambda_\varphi)$, then 
\begin{equation}\label{20jun1}
b_\E \le \sup_{\epsilon > 0} \liminf_{x \to\infty}  \frac 1x \int_0^x \left\lvert \frac 1{\epsilon} \int_{t}^{t+\epsilon}  \varphi(s)\,ds \right\rvert^2 \,dt.
\end{equation}
Moreover, the Dirac operator is regular if and only if for every $\epsilon > 0$,
\begin{equation}\label{22nov3}
\limsup_{x \to \infty}  \frac 1x \int_0^x \left\lvert \frac 1{\epsilon} \int_{t}^{t+\epsilon}  \varphi(s)\,ds \right\rvert^2 \,dt \le b_\E.
\end{equation}
\end{theorem}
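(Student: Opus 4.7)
The plan is to translate between the local averages $\psi_\epsilon(t):=\frac{1}{\epsilon}\int_t^{t+\epsilon}\varphi(s)\,ds$ and the Fourier-side measures $\sigma_x$, and then combine with Theorem~\ref{thmunivineq2}. Writing $h_\epsilon(s)=\frac{1}{\epsilon}\chi_{[-\epsilon,0]}(s)$, we have $\psi_\epsilon=h_\epsilon*\varphi$, and a direct computation shows that $f_\epsilon(k):=2\pi|\widehat{h_\epsilon}(k)|^2=\sin^2(k\epsilon/2)/(k\epsilon/2)^2$ lies in $C_0(\bbR)$, satisfies $0\le f_\epsilon\le 1$, and converges pointwise to $1$ as $\epsilon\to 0$. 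Plancherel applied to $h_\epsilon*(\varphi\chi_{[0,x]})$ then yields $\frac{1}{x}\|h_\epsilon*(\varphi\chi_{[0,x]})\|_{L^2}^2=\int f_\epsilon\,d\sigma_x$. The two boundary pieces on $[-\epsilon,0]$ and $[x-\epsilon,x]$ are estimated by Cauchy--Schwarz and \eqref{varphiL2locunif} to contribute an amount bounded uniformly in $x$ for each fixed $\epsilon$, yielding $m_\epsilon(x):=\frac{1}{x}\int_0^x|\psi_\epsilon(t)|^2\,dt=\int f_\epsilon\,d\sigma_x+O_\epsilon(1/x)$. Hence $\liminf_x$ and $\limsup_x$ of $m_\epsilon(x)$ agree with those of $\int f_\epsilon\,d\sigma_x$, so both parts of the theorem become statements about the Fourier-side integrals.

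For \eqref{20jun1} the key identity to establish is $\sup_{\epsilon>0}\liminf_{x\to\infty}\int f_\epsilon\,d\sigma_x=\inf_{\sigma\in\cS(\varphi)}\sigma(\bbR)$, which combined with Theorem~\ref{thmunivineq2} immediately gives the stated bound. The $\le$ direction is routine: if $\sigma=\lim_n\sigma_{x_n}$ vaguely, then $\int f_\epsilon\,d\sigma=\lim_n\int f_\epsilon\,d\sigma_{x_n}\ge\liminf_x\int f_\epsilon\,d\sigma_x$ since $f_\epsilon\in C_0(\bbR)$, and $\sigma(\bbR)=\lim_{\epsilon\to 0}\int f_\epsilon\,d\sigma$ by dominated convergence. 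For the $\ge$ direction, denoting the left side by $\alpha$, we pick $\epsilon_k\downarrow 0$ and $x_k\to\infty$ with $\int f_{\epsilon_k}\,d\sigma_{x_k}\le\alpha+1/k$, and pass to a vaguely convergent subsequence with limit $\sigma\in\cS(\varphi)$. For any $f\in C_c(\bbR)$ with $0\le f\le 1$ and $\operatorname{supp}f\subset[-R,R]$, since $f_{\epsilon_k}\to 1$ uniformly on $[-R,R]$ we have $f\le(1-\delta)^{-1}f_{\epsilon_k}$ pointwise for $k$ large, giving $\int f\,d\sigma_{x_k}\le(1-\delta)^{-1}(\alpha+1/k)$; passing to the limit, letting $\delta\to 0$, and taking the supremum over $f$ yields $\sigma(\bbR)\le\alpha$, so $\inf_\sigma\sigma(\bbR)\le\alpha$.

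The regularity characterization is immediate from Theorem~\ref{thmunivineq2} together with vague convergence. For the forward direction, given $\epsilon>0$ and $x_n\to\infty$ achieving $\limsup_x\int f_\epsilon\,d\sigma_x$, any vaguely convergent subsequence has limit $\sigma\in\cS(\varphi)$ with $\int f_\epsilon\,d\sigma=\limsup_x\int f_\epsilon\,d\sigma_x\le\sigma(\bbR)=b_\E$. Conversely, if $\limsup_x\int f_\epsilon\,d\sigma_x\le b_\E$ for every $\epsilon$, then for any $\sigma=\lim\sigma_{x_n}$ in $\cS(\varphi)$ we get $\int f_\epsilon\,d\sigma\le b_\E$ for every $\epsilon$, and dominated convergence gives $\sigma(\bbR)\le b_\E$; together with $\sigma(\bbR)\ge b_\E$ from Theorem~\ref{thmunivineq2} this forces equality.

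The main obstacle is the $\ge$ direction of the identity used in \eqref{20jun1}: the extraction of a vaguely convergent subsequence whose limit has small total mass is the one step that requires more than routine vague-convergence bookkeeping. The crucial device will be the uniform-on-compacts convergence $f_{\epsilon_k}\to 1$ as $\epsilon_k\downarrow 0$, which lets one control integrals against arbitrary $C_c$-test functions in terms of the controlled quantities $\int f_{\epsilon_k}\,d\sigma_{x_k}$; without this, no a priori bound on $\sigma(\bbR)$ would follow from the construction.
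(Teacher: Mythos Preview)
Your proposal is correct and follows essentially the same approach as the paper. The paper likewise translates the local averages into integrals $\int |\widehat g_\epsilon|^2\,d\sigma_x$ (with $g_\epsilon=\frac{\sqrt{2\pi}}{\epsilon}\chi_{[-\epsilon,0]}$, so $|\widehat g_\epsilon|^2$ is your $f_\epsilon$), proves the identities $\inf_{\sigma\in\cS(\varphi)}\sigma(\bbR)=\sup_{\epsilon>0}\liminf_x\int|\widehat g_\epsilon|^2\,d\sigma_x$ and $\sup_{\sigma\in\cS(\varphi)}\sigma(\bbR)=\sup_{\epsilon>0}\limsup_x\int|\widehat g_\epsilon|^2\,d\sigma_x$ via the same vague-compactness and uniform-on-compacts convergence $f_\epsilon\to 1$, and then invokes Theorem~\ref{thmunivineq2}. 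The only cosmetic difference is that for the harder ($\ge$) inequality in the first identity the paper factors through an intermediate lemma $\inf_\sigma\sigma(\bbR)=\sup_L\inf_\sigma\sigma((-L,L))$, whereas you carry out the diagonal extraction in one step; the content is the same.
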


The case when constants go to $0$ can be particularly well understood, because $b_\E \ge 0$ with equality if and only if $\E = \bbR$. Thus, the following corollary is immediate:

\begin{corollary} \label{corollarysigmaR}
For any Dirac operator with $\varphi \in L^2_{\loc,\unif}([0,\infty))$, if
\begin{equation}\label{22nov5}
\liminf_{x \to\infty}  \frac 1x \int_0^x \left\lvert \frac 1{\epsilon} \int_{t}^{t+\epsilon}  \varphi(s)\,ds \right\rvert^2 \,dt = 0
\end{equation}
for all $\epsilon > 0$, then $\sigma_\ess(\Lambda_\varphi) = \bbR$.
\end{corollary}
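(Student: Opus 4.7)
The plan is to read this corollary as an immediate specialization of Theorem~\ref{thmlocalaverages}. First, I would substitute the hypothesis directly into the universal inequality \eqref{20jun1}: since the liminf on the right-hand side is $0$ for every $\epsilon > 0$, the supremum over $\epsilon > 0$ is also $0$, so \eqref{20jun1} yields $b_\E \le 0$. Combined with $b_\E \ge 0$ from Theorem~\ref{thm11}, this pins down $b_\E = 0$.

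Second, I would upgrade the equality $b_\E = 0$ to the full conclusion $\E = \bbR$ via Lemma~\ref{lemmamonotonicity}. The Martin function for $\bbR$ is $M_\bbR(z) = |\Im z|$, which has the form \eqref{MartinFunc2termexp} with constant $b_\bbR = 0$. Applying Lemma~\ref{lemmamonotonicity} to the inclusion $\E \subset \bbR$ (valid because the essential spectrum of a self-adjoint operator lies in $\bbR$, and because the Martin function for $\E$ has the required two-term asymptotics by Theorem~\ref{thm11}) gives $b_\bbR \le b_\E$ with equality if and only if $\bbR \setminus \E$ is polar. Since $\bbR \setminus \E$ is open and every nonempty open subset of $\bbR$ contains an interval of positive length and is therefore not polar, the equality $b_\E = b_\bbR = 0$ forces $\bbR \setminus \E = \emptyset$, that is, $\E = \bbR$.

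There is no main obstacle in this proof: the substantive content sits in Theorem~\ref{thmlocalaverages} and Lemma~\ref{lemmamonotonicity}, which together make the corollary a straightforward unpacking. The only small self-contained observation required is the open-polar-implies-empty argument at the very end, which is a standard feature of logarithmic capacity on the real line.
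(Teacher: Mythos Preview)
Your proposal is correct and follows essentially the same route as the paper: the paper's proof is a single line invoking Theorem~\ref{thmlocalaverages} to get $b_\E \le 0$, hence $b_\E = 0$ and $\E = \bbR$, where the implication $b_\E = 0 \Rightarrow \E = \bbR$ has already been recorded (just after the proof of Lemma~\ref{lemmamonotonicity}) via exactly the open-polar-implies-empty observation you spell out. You have simply made explicit what the paper leaves to earlier remarks.
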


Classical Blumenthal--Weyl type results state that a decaying potential does not affect the essential spectrum;  Corollary~\ref{corollarysigmaR} is a Ces\`aro type generalization of such a result. Of course, note that \eqref{22nov5} follows from $\liminf_{x\to \infty} \frac 1x \int_0^x \lvert \varphi(t) \rvert^2 \,dt = 0$.

It is now easy to formulate perturbative criteria for regularity for classes of perturbations which don't affect essential spectrum and don't affect the averages \eqref{22nov3}. The opposite direction is more subtle: the question of whether regularity implies some form of decay was first considered by Simon~\cite{Simon09}, who proved that regularity for a Jacobi matrix with spectrum $[-2,2]$ implies a certain Ces\`aro type decay. In the Schr\"odinger case the corresponding statement turns out to be false: if a regular Schr\"odinger operator has essential spectrum $[0,\infty)$, this does not imply that  $\frac 1x \int_0^x \lvert V(t) \rvert dt\to 0$ as $x \to \infty$ \cite{EL}. For Dirac operators, we not only prove the correct analog of Simon's result,  we obtain an ``if and only if'' characterization of regular operators with $\E = \bbR$.

\begin{corollary} \label{corCesaroNevai}
For any $\varphi$ which obeys \eqref{varphiL2locunif}, the following are equivalent:
\begin{enumerate}[(i)]
\item $\sigma(\Lambda_\varphi) = \bbR$ and $\Lambda_\varphi$ is regular;
\item For all $\epsilon > 0$,
\begin{equation}\label{23nov1}
\lim_{x \to \infty}  \frac 1x \int_0^x \left\lvert \frac 1{\epsilon} \int_{t}^{t+\epsilon}  \varphi(s)\,ds \right\rvert^2 \,dt = 0.
\end{equation}
\end{enumerate}
\end{corollary}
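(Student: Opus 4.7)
The proof is essentially a direct combination of Corollary \ref{corollarysigmaR} and the characterization of regularity in Theorem \ref{thmlocalaverages}, specialized to $\E = \bbR$. The key observation that ties the two statements together is that for $\E = \bbR$ we have the explicit Martin function $M_\bbR(z) = \lvert \Im z \rvert$, so comparing with the expansion \eqref{MartinFunc2termexp} (which in this degenerate case reduces to the identity $M_\bbR(z) = \Im z$ for $z \in \bbC_+$) shows $b_\bbR = 0$. Combined with self-adjointness, which forces $\sigma(\Lambda_\varphi) \subseteq \bbR$, we get $\sigma(\Lambda_\varphi) = \bbR$ if and only if $\E = \bbR$.

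\medskip

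For the direction (i) $\Rightarrow$ (ii), I would argue as follows. Assume $\sigma(\Lambda_\varphi) = \bbR$ and $\Lambda_\varphi$ is regular. Then $\E = \bbR$, hence $b_\E = 0$ by the remark above. Applying the second part of Theorem \ref{thmlocalaverages}, regularity gives \eqref{22nov3} for every $\epsilon > 0$, i.e.
\[
\limsup_{x \to \infty}  \frac 1x \int_0^x \left\lvert \frac 1{\epsilon} \int_{t}^{t+\epsilon}  \varphi(s)\,ds \right\rvert^2 \,dt \le 0.
\]
Since the integrand is nonnegative, the limit inferior is also nonnegative, so the limit exists and equals $0$, which is precisely \eqref{23nov1}.

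\medskip

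For (ii) $\Rightarrow$ (i), assume \eqref{23nov1} holds for all $\epsilon > 0$. Then in particular the liminf vanishes for all $\epsilon > 0$, so hypothesis \eqref{22nov5} of Corollary \ref{corollarysigmaR} is met, giving $\sigma_\ess(\Lambda_\varphi) = \bbR$. Since $\sigma_\ess \subseteq \sigma \subseteq \bbR$ (the latter inclusion by self-adjointness), this upgrades to $\sigma(\Lambda_\varphi) = \bbR$. Moreover, $\E = \bbR$ gives $b_\E = 0$, and \eqref{23nov1} implies that for every $\epsilon > 0$ the limsup in \eqref{22nov3} is $0 = b_\E$; by the regularity characterization in Theorem \ref{thmlocalaverages}, $\Lambda_\varphi$ is regular.

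\medskip

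There is no real obstacle in this argument, as the substantive content has been done in the preceding results; the only point that might merit a line of justification is the identification $b_\bbR = 0$, which follows either by inspecting \eqref{MartinFunc2termexp} for the trivial Martin function or by invoking the monotonicity Lemma \ref{lemmamonotonicity} together with $M_\bbR(z) = \lvert \Im z \rvert$.
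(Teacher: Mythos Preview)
Your proof is correct and follows essentially the same approach as the paper's own proof, which is the one-liner ``This is a special case of Theorem~\ref{thmlocalaverages}, since $b_\E \ge 0$ with equality if and only if $\E = \bbR$.'' Your version simply unpacks this sentence in detail, and your invocation of Corollary~\ref{corollarysigmaR} in the (ii)$\Rightarrow$(i) direction is equivalent to applying \eqref{20jun1} directly (indeed, Corollary~\ref{corollarysigmaR} is itself proved from \eqref{20jun1}).
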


Although regularity on $\bbR$ does not imply decay of $L^2$-averages, (ii) is a weak $L^2$ Ces\`aro decay statement. Let us illustrate one more possibility: 

\begin{example} \label{xmpl12}
Let $\varphi(x) = \chi_B(x) \sin(x^\alpha)$ where $B = \cup_{k=0}^\infty [q^{2k}, q^{2k+1})$ 
for some $\alpha, q > 1$. Then  $\Lambda_\varphi$ is regular and $\sigma(\Lambda_\varphi) = \bbR$, but  $\lim_{x\to\infty} \frac 1x \int_0^x \lvert \varphi(t)\rvert^2 dt$ does not exist.
\end{example}

 If the family $\{ \sigma_x \mid x \ge 1\}$ is tight (i.e. its tails are uniformly decaying), then equality holds in \eqref{29aug1} and these distinctions vanish.  In particular, we obtain this if the $L^2$-boundedness \eqref{varphiL2locunif} of length $1$ restrictions of $\varphi$ is replaced by precompactness:

\begin{corollary}\label{corL2precompact} 
Assume that the set $\{ \varphi(\cdot - x)\rvert_{(0,1)} \mid x \ge 0 \}$ is precompact in $L^2((0,1))$. Then the operator $\Lambda_\varphi$ is regular if and only if \eqref{1dec1} holds.
\end{corollary}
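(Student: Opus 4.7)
The plan is to leverage Theorem~\ref{thmlocalaverages}, which characterizes regularity via the local-average bounds \eqref{22nov3}, and to show that the precompactness hypothesis lets us replace the local average $\tfrac 1\epsilon \int_t^{t+\epsilon} \varphi(s)\,ds$ by $\varphi(t)$ itself in an $L^2$-Ces\`aro sense as $\epsilon\to 0$, uniformly in $x$ large.

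The reverse direction is immediate and uses no precompactness. If $\lim_{x\to\infty} \tfrac 1x \int_0^x \lvert \varphi \rvert^2\,dt = b_\E$, then Jensen and Fubini give
\[
\frac 1x \int_0^x \left\lvert \frac 1\epsilon \int_t^{t+\epsilon}\varphi(s)\,ds\right\rvert^2 dt \le \frac 1x \int_0^{x+\epsilon} \lvert \varphi(s)\rvert^2\,ds \xrightarrow[x\to\infty]{} b_\E
\]
for every $\epsilon > 0$, so \eqref{22nov3} holds and Theorem~\ref{thmlocalaverages} yields regularity.

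For the forward direction, assume $\Lambda_\varphi$ is regular and fix $\eta > 0$. By the Kolmogorov--Riesz criterion, precompactness of the translates $\{\varphi(\cdot - x)\rvert_{(0,1)}\}$ in $L^2((0,1))$ is equivalent to uniform boundedness plus uniform $L^2$-continuity of translation, and additionally implies uniform integrability of the family $\{\lvert \varphi(\cdot+x)\rvert^2\rvert_{(0,1)}\}$ in $L^1((0,1))$, which controls the length-$h$ boundary pieces when unit intervals are patched together. Summing over such a partition of $[0,x]$ yields $h_0 > 0$ and a constant $C$ with
\[
\frac 1x \int_0^x \lvert \varphi(t+h)-\varphi(t)\rvert^2 dt \le C\eta \quad \text{for all } 0 < h < h_0,\ x \ge 1.
\]
Applying the identity $\varphi(t) - \tfrac 1\epsilon \int_t^{t+\epsilon}\varphi(s)\,ds = \tfrac 1\epsilon \int_0^\epsilon (\varphi(t) - \varphi(t+h))\,dh$ together with Jensen,
\[
\limsup_{x\to\infty} \frac 1x \int_0^x \left\lvert \varphi(t) - \frac 1\epsilon \int_t^{t+\epsilon}\varphi(s)\,ds\right\rvert^2 dt \le C\eta \quad \text{for every } 0 < \epsilon < h_0.
\]

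To conclude, combine this with the triangle inequality in the $L^2$-Ces\`aro seminorm $f \mapsto \limsup_{x\to\infty} \bigl(\tfrac 1x \int_0^x \lvert f\rvert^2 dt\bigr)^{1/2}$. Regularity and Theorem~\ref{thmlocalaverages} give $\limsup_{x\to\infty}\tfrac 1x \int_0^x \lvert \tfrac 1\epsilon \int_t^{t+\epsilon}\varphi\rvert^2 dt \le b_\E$, so we obtain $\limsup_{x\to\infty} \sqrt{\tfrac 1x \int_0^x \lvert \varphi\rvert^2 dt} \le \sqrt{b_\E} + \sqrt{C\eta}$ for all small $\epsilon$. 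Since $\eta$ was arbitrary, $\limsup_{x\to\infty} \tfrac 1x \int_0^x \lvert \varphi \rvert^2 dt \le b_\E$; combined with the matching lower bound $b_\E \le \liminf_{x\to\infty}\tfrac 1x \int_0^x \lvert \varphi \rvert^2 dt$ from Theorem~\ref{thm12}, the limit in \eqref{1dec1} exists and equals $b_\E$. The main obstacle is the uniform mollification estimate; but once the Kolmogorov--Riesz compactness characterization is invoked, the argument is essentially classical harmonic analysis on the locally uniformly $L^2$ data.
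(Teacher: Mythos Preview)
Your proof is correct but follows a genuinely different route from the paper's. You work entirely in physical space via the local-averages characterization of Theorem~\ref{thmlocalaverages}: Kolmogorov--Riesz gives uniform $L^2$-equicontinuity of translates (with uniform integrability handling the overlap at unit-interval boundaries), which yields uniform Ces\`aro approximation of $\varphi$ by its short-time averages $\tfrac{1}{\epsilon}\int_t^{t+\epsilon}\varphi$; the triangle inequality at each fixed $x$, followed by $\limsup$, then closes the argument. The paper instead passes to the Fourier side and the measures $\sigma_x$: precompactness is converted, via a Pego-type estimate (comparing $\varphi\chi_{(0,x)}$ with its Gaussian mollification), into the tightness statement $\limsup_{x\to\infty}\sigma_x(\bbR\setminus[-r,r])\to 0$ as $r\to\infty$; this forces $\sigma(\{\infty\})=0$ for every weak-$*$ limit in $\cM(\overline{\bbR})$, so $\sigma(\bbR)=\lim_k \sigma_{x_k}(\bbR)$ and the definition of regularity collapses directly to \eqref{1dec1}. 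Your argument is more self-contained, bypassing the vague-convergence machinery of Section~\ref{sectionvague}; the paper's argument is more conceptual in that it isolates tightness of $\{\sigma_x\}$ as the precise mechanism and integrates naturally with the Fourier-side framework used elsewhere. Both rest on the same compactness criterion, applied in dual pictures.
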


For instance, if $\varphi$ is periodic and locally $L^2$, the set of restrictions $\varphi(\cdot - x)\rvert_{(0,1)}$ is continuously parametrized by $x \in \bbR / T\bbZ$ where $T$ is the period. As a continuous image of the compact circle, this family is compact. By Floquet theory and Theorem~\ref{RegularityTFAE}(iii), periodic operators  are regular, so by Corollary~\ref{corL2precompact}, they obey \eqref{1dec1}.

Precompactness can also be a consequence of additional regularity. On an open interval $I$, the Sobolev spaces $H^\gamma(I) = W^{\gamma,2}(I)$ are defined for any $\gamma > 0$  \cite{DPV}; in particular, the fractional Sobolev spaces $H^\gamma(I)$ for $\gamma \in (0,1)$ are defined by the norm
\begin{equation}\label{1dec3}
\lVert \varphi \rVert_{H^\gamma(I)}^2  = \int_I \lvert \varphi(x) \rvert^2 \,dx + \iint_{I\times I} \frac{ \lvert \varphi(x) - \varphi(y) \rvert^2}{\lvert x - y\rvert^{1+2\gamma}}\,dx \,dy.
\end{equation}
By a version of a Riesz--Frechet--Kolmogorov compact embedding theorem \cite[Lemma 10]{PSV}, \cite[Theorem 7.1]{DPV}, bounded subsets of $H^\gamma(I)$ are precompact subsets of $L^2(I)$, so Corollary~\ref{corL2precompact} implies:

\begin{corollary}\label{corHgamma}
Assume that for some $\gamma \in (0,\infty)$, $\varphi$ obeys $\sup_{x\ge 0} \lVert \varphi  \rVert_{H^\gamma((x,x+1))} < \infty$. Then the Dirac operator $\Lambda_\varphi$ is regular if and only if \eqref{1dec1} holds.
\end{corollary}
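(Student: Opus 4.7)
The plan is to reduce directly to Corollary~\ref{corL2precompact} by showing that the uniform fractional Sobolev bound forces the family $\{\varphi(\cdot-x)\rvert_{(0,1)} : x \ge 0\}$ (read as translates onto $(0,1)$) to be precompact in $L^2((0,1))$. Once this is established, the equivalence of regularity and \eqref{1dec1} is immediate from Corollary~\ref{corL2precompact}, and Corollary~\ref{corHgamma} follows.

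The first step is to note that the Sobolev norm \eqref{1dec3} is translation invariant on a bounded interval: for each $x \ge 0$, translating $\varphi\rvert_{(x,x+1)}$ onto $(0,1)$ gives a function with the same $L^2$ part and the same double integral in the Gagliardo seminorm (and similarly for higher-order derivatives when $\gamma \ge 1$). Hence the hypothesis $\sup_{x\ge 0}\lVert \varphi \rVert_{H^\gamma((x,x+1))} < \infty$ yields a family of translates that is bounded in $H^\gamma((0,1))$.

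The second step is to invoke the compact embedding $H^\gamma((0,1)) \hookrightarrow L^2((0,1))$ for $\gamma > 0$. For $\gamma \in (0,1)$ this is the Riesz--Frechet--Kolmogorov type theorem already cited in the paper (\cite[Theorem 7.1]{DPV}, \cite[Lemma 10]{PSV}), applied on the bounded interval $(0,1)$. For $\gamma \ge 1$, it follows a fortiori from $H^\gamma((0,1)) \subset H^1((0,1))$ together with the classical Rellich--Kondrachov theorem. In either case, bounded subsets of $H^\gamma((0,1))$ are precompact in $L^2((0,1))$, so the family of translates is precompact in $L^2((0,1))$.

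With precompactness in hand, Corollary~\ref{corL2precompact} applies verbatim and gives the stated equivalence. There is no real obstacle here beyond the standard verification of the compact embedding in the fractional case; the translation invariance of the $H^\gamma$ norm on unit intervals is the only nontrivial observation, and it is immediate from the form of \eqref{1dec3}.
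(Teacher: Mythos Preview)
Your proposal is correct and follows essentially the same approach as the paper: reduce to Corollary~\ref{corL2precompact} by using the compact embedding $H^\gamma((0,1)) \hookrightarrow L^2((0,1))$ (citing \cite[Theorem 7.1]{DPV}, \cite[Lemma 10]{PSV}) to conclude that the uniformly $H^\gamma$-bounded family of unit translates is precompact in $L^2((0,1))$. Your added remarks on translation invariance and the case split $\gamma \in (0,1)$ versus $\gamma \ge 1$ are fine elaborations but not substantively different from the paper's argument.
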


We conclude that local $L^2$ regularity is the correct setting for this theory; weaker local conditions wouldn't work in many $L^2$-based estimates, and stronger local conditions would have missed the surprisingly rich phenomena described above.

For any $x > 0$, let $\rho_x$ denote the renormalized zero counting measure 
\[
\rho_x = \frac 1x \sum_{z: u_1(x,z) = u_2(x,z)} \delta_z.
\]
This is a renormalized eigenvalue counting measure for a self-adjoint Dirac operator on $[0,x]$ with Dirichlet boundary conditions $f_1(0) = f_2(0)$, $f_1(x) = f_2(x)$. Each measure $\rho_x$ is infinite. The limit of $\rho_x$, if it exists, is a deterministic density of states associated with the operator. The candidate limit is obtained from the Martin function: the Martin function $M_\E$ extends to a subharmonic function on $\bbC$, so it has a Riesz measure $\rho_\E = \frac 1{2\pi} \Delta M_\E$, which we call the Martin measure. Once again, since $\E$ is not semibounded, a modification of the arguments is needed, but we obtain conclusions analogous to the orthogonal polynomial and Schr\"odinger settings:

\begin{theorem} \phantomsection \label{thm114}
\begin{enumerate}[(a)]
\item If $\Lambda_\varphi$ is regular, then $\rho_x\to \rho_\E$ as $x \to \infty$ in the topology dual to $C_c^\infty(\bbR)$.
\item If $\rho_x \to \rho_\E$ as $x \to \infty$ in the topology dual to $C_c^\infty(\bbR)$, then either $\Lambda_\varphi$ is regular or there exists a polar Borel set $B$ such that $\chi_{\bbR \setminus B}(\Lambda_\varphi)= 0$.
\end{enumerate}
\end{theorem}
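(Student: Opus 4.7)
The natural subharmonic function to introduce is
\[
H_x(z) := \frac{1}{x}\log|u_1(x,z) - u_2(x,z)|.
\]
For fixed $x > 0$, $u_1(x,\cdot) - u_2(x,\cdot)$ is entire of order $1$ whose zero set consists precisely of the Dirichlet eigenvalues contributing to $x\rho_x$ (all simple and real by self-adjointness), so $H_x$ is subharmonic on $\bbC$ with Riesz measure $\rho_x$. For $f \in C_c^\infty(\bbR)$, extending to $\tilde f \in C_c^\infty(\bbC)$ and using distributional integration by parts, $\int f\,d\rho_x = \tfrac{1}{2\pi}\int_\bbC H_x\,\Delta\tilde f\,dA$, and likewise for $\rho_\E$. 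Once $\{H_x\}$ is shown locally uniformly bounded above, convergence $\rho_x \to \rho_\E$ in the topology dual to $C_c^\infty(\bbR)$ is equivalent to $H_x \to M_\E$ in $L^1_{\loc}(\bbC)$.

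For part (a), assume regularity. The trivial bound $|u_1 - u_2| \le \sqrt 2\|U\|$ and Theorem~\ref{RegularityTFAE}(vi) give $H_x \le \tfrac{1}{x}\log\|U(x,\cdot)\| + O(1/x)$, which converges uniformly on compacts of $\bbC\setminus\bbR$ to $M_\E$; a standard subharmonic argument yields a locally uniform upper bound on all of $\bbC$. For the matching lower bound, introduce $V := PU$ with $P = \tfrac{1}{\sqrt 2}\begin{pmatrix}1 & 1\\1 & -1\end{pmatrix}$, giving $u_1 - u_2 = \sqrt 2\,V_2$ and $\|U\| = \|V\|$, so
\[
G_x(z) - H_x(z) = \frac{1}{2x}\log\frac{1 + |V_1(x,z)/V_2(x,z)|^2}{2}, \quad G_x(z) := \frac{1}{x}\log\|U(x,z)\|.
\]
The meromorphic quotient $V_1/V_2$ converges pointwise on $\bbC \setminus \bbR$ to the half-line Weyl $m$-function $m_+$, so $(G_x - H_x)(z) \to 0$ pointwise. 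A uniform-in-$x$ upper envelope for $\log^+|V_1/V_2|$ extracted from the fundamental-solution asymptotics established earlier in the paper then allows dominated convergence to upgrade this to $L^1_{\loc}(\bbC)$ convergence. Combined with $G_x \to M_\E$ in $L^1_{\loc}$ (which follows similarly from regularity), this gives $H_x \to M_\E$ in $L^1_{\loc}$, completing (a).

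For part (b), assume $H_x \to M_\E$ in $L^1_{\loc}$. Either $\Lambda_\varphi$ is regular and we are done, or, by the negation of Theorem~\ref{RegularityTFAE}(iv), $\limsup_x G_x(z_0) > M_\E(z_0)$ at some $z_0 \in \bbC_+$. Since $H_x \le G_x + O(1/x)$ and $H_x \to M_\E$, the positive excess $G_x - H_x$ must compensate, i.e., via the identity above, the rotated quotient $V_1/V_2$ grows super-exponentially on a subset of $\bbC_+$ of positive Lebesgue measure. By the Herglotz representation of $m_+$ as an integral transform of the half-line spectral measure $\mu$, such growth of $V_1/V_2$ (which approximates $m_+$ for large $x$) translates into $\mu$ being supported on a set of zero harmonic measure for $\bbC\setminus\E$, equivalently a polar Borel subset $B$ of $\bbR$; thus $\chi_{\bbR \setminus B}(\Lambda_\varphi) = 0$. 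The principal technical obstacle throughout is the uniform-in-$x$ control of the rotated quotient $V_1/V_2$—equivalently, of the Weyl-$m$-like function for the finite-interval problem—which is required to execute the dominated-convergence step in both parts and must be extracted from the sharp solution asymptotics developed in the body of the paper.
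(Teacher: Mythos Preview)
Your function $H_x$ is precisely the paper's $h(x,\cdot)$, and the paper has already done the heavy lifting you are re-attempting. Theorem~\ref{precompactness.theorem} establishes that $\{h(x,\cdot)\}_{x\ge 1}$ is precompact in $\cD'(\bbC)$, and the proof of Theorem~\ref{RegularityTFAE} shows that regularity forces $h(x,\cdot)\to M_\E$ in $\cD'(\bbC)$. Part~(a) is then one line: apply $\tfrac{1}{2\pi}\Delta$ distributionally. Your detour through $G_x$, the rotation $V=PU$, and an asserted convergence $V_1/V_2\to m_+$ is unnecessary; moreover, $V_1/V_2=(u_1+u_2)/(u_1-u_2)$ is tied to the finite-interval Weyl function at the \emph{left} endpoint, not to the half-line $m_+$ at $x$, so that specific claim is wrong. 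The pointwise control you actually need, namely that $\lvert u_1-u_2\rvert/\lvert u_1\rvert$ is bounded above and below uniformly in $x\ge 1$, is exactly Lemma~\ref{pass.to.new.h}.

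Part~(b) has a genuine gap. You assert that super-exponential growth of $V_1/V_2$ ``translates'' via the Herglotz representation into $\mu$ being supported on a set of zero harmonic measure, ``equivalently a polar Borel subset.'' First, zero harmonic measure and polarity are \emph{not} equivalent, so that step fails as stated. Second, no mechanism is given for passing from growth of a finite-interval quotient to a support statement for the half-line spectral measure. The paper's argument is structurally different and avoids all of this: take any subsequential limit $h\neq M_\E$; by assumption its Riesz measure is $\rho_\E$, so by the Hadamard-type representation (Lemma~\ref{lem:positveSymSubharmonic}) $h$ and $M_\E$ differ by an affine function, and the leading asymptotics force $h-M_\E\equiv d>0$ to be a positive constant. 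Since $M_\E$ vanishes q.e.\ on $\E$, the upper envelope theorem gives $\limsup_j h(x_{n_j},z)=d>0$ for $z\in\E$ off a polar set, while the Schnol-type Lemma~\ref{lemmaSchnol} gives $\limsup\le 0$ for $\mu$-a.e.\ $z$. These two statements are incompatible unless $\mu$ lives on that polar exceptional set. The key idea you are missing is that equality of Riesz measures pins the limit down to $M_\E$ plus a constant, and it is the \emph{constant gap} on $\E$ (not any Herglotz growth estimate) that conflicts with Schnol.
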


In the additional case in (b), $\chi_{\bbR \setminus B}(\Lambda_\varphi)$ denotes the spectral projection, in the sense of the Borel functional calculus. This additional case, which prevents Theorem~\ref{thm114} from being an equivalence, is not an artifact of the proof. It stems from the fact that the Riesz measure is uniquely determined by the subharmonic function, but not conversely. In the Jacobi matrix setting, this case is illustrated by the supercritical almost Mathieu operator \cite[Example 8.3]{Simon07}, and we expect that similar examples are possible for Dirac operators.

The results of this paper have immediate corollaries to ergodic Dirac operators \cite{Shubin,Savin,PasturFigotin}. The various averages then exist by ergodicity and are the central objects of ergodic spectral theory: the growth rate of the Dirichlet solution is equal to the Lyapunov exponent except on a polar subset of $\E$, and  the limit of the measures $\rho_x$ is the density of states. Our results thus specialize to the ergodic setting; for instance, by Theorem~\ref{RegularityTFAE}, if an ergodic Dirac operator has Lyapunov exponent $0$ for almost every $z \in \E$ with respect to harmonic measure, then it is regular. We also note 
that Corollary~\ref{corL2precompact} applies to ergodic Dirac operators obtained by continuously sampling on a compact parameter space (e.g. almost periodic Dirac operators). Beyond these remarks, we list explicitly one ergodic application of our results, an ``ultimate Ishii--Pastur theorem" for Dirac operators.

\begin{theorem}\label{IshiiPastur}
Let $\gamma$ denote the Lyapunov exponent associated to an ergodic family of Dirac operators $(\Lambda_{\varphi_\eta})_{\eta \in S}$ which obey \eqref{varphiL2locunif} almost surely. Let $\mu_\eta$ denote a maximal spectral measure for $\Lambda_{\varphi_\eta}$. Let $Q \subset \bbR$ be the Borel set of $\lambda \in \bbR$ with $\gamma(\lambda) > 0$. Then for a.e.\ $\eta \in S$, there exists a polar set $X_\eta$ such that $\mu_\eta(Q \setminus X_\eta) = 0$. In particular, the measure $\chi_Q d\mu_\eta$ is of local Hausdorff dimension zero.
\end{theorem}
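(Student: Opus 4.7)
The plan is to combine the Combes--Thomas estimate (Theorem~\ref{thm13}) with standard ergodic theory and classical subordinacy to show that for a.s.\ $\eta$, the Dirichlet solution grows at the full Lyapunov rate $\gamma(\lambda) > 0$ for every $\lambda \in Q$ outside a polar exceptional set $X_\eta$, and then to rule out spectral mass on such $\lambda$.

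First, by Kingman's subadditive ergodic theorem applied to the transfer matrices $T_\eta(x,z)$, for each fixed $z \in \bbC \setminus \bbR$ and a.s.\ $\eta$, the limit $\lim_{x \to \infty} \frac{1}{x} \log \lVert U(x,z) \rVert$ exists and equals $\gamma(z)$. Theorem~\ref{thm13} then gives $\gamma(z) \ge M_\E(z)$ on $\bbC \setminus \bbR$ for a.s.\ $\eta$; by the subharmonicity of $\gamma$ (harmonic off $\bbR$, with Riesz measure equal to the density of states) this inequality extends to all of $\bbC$. A Fubini/Furstenberg-type argument using the analytic dependence of the cocycle $T_\eta(x,\cdot)$ on the spectral parameter then yields, for a.s.\ $\eta$, the limit $\lim_{x \to \infty} \frac{1}{x} \log \lVert U(x, \lambda) \rVert = \gamma(\lambda)$ for every $\lambda \in \bbR$ outside a polar set $X_\eta$; polarity is a consequence of the analyticity, since the exceptional "alignment" set is contained in the zero set of a nontrivial subharmonic function arising from the Wronskian of the Dirichlet and Weyl solutions.

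Second, on $Q \setminus X_\eta$ the Dirichlet solution grows at rate $\gamma(\lambda) > 0$. By classical Kotani theory for ergodic Dirac operators, the a.c.\ part of $\mu_\eta$ is supported on the essential closure of $\{\gamma = 0\}$, so $\mu_{\eta,\mathrm{ac}}(Q) = 0$; the singular part on $Q$ is controlled by a Gilbert--Pearson/Jitomirskaya--Last subordinacy argument, which confines the singular spectrum to the set where the Dirichlet initial condition aligns with the contracting Oseledec direction of the cocycle --- a subset of $X_\eta$. Therefore $\mu_\eta(Q \setminus X_\eta) = 0$. The local Hausdorff-dimension-zero statement is immediate, since polar subsets of $\bbR$ have logarithmic capacity zero and hence Hausdorff dimension zero.

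The main obstacle is the polar-set refinement in the first step: standard Furstenberg/Oseledec statements give "full Lebesgue measure" rather than "polar exceptional set". Upgrading requires careful use of the analyticity in $z$ of the cocycle together with the potential-theoretic calculus for subharmonic functions developed in Section~\ref{sectionMartin}, combined with Poltoratski--Remling-type boundary behavior of the Weyl $m$-function. A secondary technical issue is the adaptation of Kotani and subordinacy theory to the uniformly-locally-$L^2$ Dirac setting, which should follow from the sharp Combes--Thomas bound of Theorem~\ref{thm13} and the analysis of Dirichlet solutions established elsewhere in this paper.
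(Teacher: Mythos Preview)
Your proposal takes a substantially more complicated route than the paper, and the part you yourself flag as the ``main obstacle'' is exactly where the paper's argument is clean and your sketch has a genuine gap.

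The paper does not use Kotani theory, subordinacy, or any Oseledec alignment argument. Instead, the two key ingredients are (i) the upper envelope theorem from potential theory and (ii) the Schnol-type Lemma~\ref{lemmaSchnol}. Once ergodicity gives $h(x,z)\to\gamma(z)$ pointwise on $\bbC\setminus\bbR$, subharmonicity and the weak identity principle upgrade this to convergence in $\cD'(\bbC)$; the upper envelope theorem then says that $\limsup_{n\to\infty} h(n,z)=\gamma(z)$ for all $z$ outside a polar set $X_\eta$. This is precisely the ``polar exceptional set'' you were trying to manufacture by hand. On the spectral side, Lemma~\ref{lemmaSchnol} gives $\lVert U(x,\lambda)\rVert=O(x^\kappa)$ for $\mu_\eta$-a.e.\ $\lambda$, so $\limsup h(n,\lambda)\le 0$ $\mu_\eta$-a.e., and hence $\mu_\eta(Q\setminus X_\eta)=0$ immediately.

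By contrast, your mechanism for polarity --- that the exceptional set is ``contained in the zero set of a nontrivial subharmonic function arising from the Wronskian of the Dirichlet and Weyl solutions'' --- is not a valid argument as stated: for real $\lambda$ there is no Weyl solution in the $L^2$ sense, and the alignment set you describe is defined in terms of Oseledec data that is only Lebesgue-a.e.\ meaningful, not pointwise. Getting polarity this way would require exactly the subharmonic-convergence machinery (upper envelope theorem) that the paper invokes directly. Likewise, the detour through Kotani theory for the a.c.\ part and subordinacy for the singular part is unnecessary and brings in adaptations to the $L^2_{\loc,\unif}$ Dirac setting that you correctly identify as nontrivial; Schnol's lemma bypasses all of this in one stroke.
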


We also obtain a Dirac analog of a Widom criterion:

\begin{theorem} \label{theoremWidomcriterion}
Let $\varphi$ obey \eqref{varphiL2locunif} and denote by $\mu$ a maximal spectral measure for $\Lambda_\varphi$. If harmonic measure on $\E$ is absolutely continuous with respect to $\mu$, then $\Lambda_\varphi$ is regular.
\end{theorem}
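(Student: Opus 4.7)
The plan is to verify condition (iii) of Theorem~\ref{RegularityTFAE}: that for a.e.\ $z \in \E$ with respect to harmonic measure on $\bbC \setminus \E$, $\limsup_{x \to \infty} \frac{1}{x} \log \lVert U(x,z) \rVert \le 0$. Since harmonic measure on $\E$ is by hypothesis absolutely continuous with respect to $\mu$, it will be enough to establish this inequality for $\mu$-a.e.\ $z \in \bbR$, whence it automatically passes to harmonic-a.e.\ $z \in \E$.

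The main tool will be a Berezanskii-type generalized eigenfunction expansion for the self-adjoint operator $\Lambda_\varphi$ on $L^2([0,\infty),\bbC^2)$. Such an expansion produces a measurable family of $\bbC^2$-valued functions $\psi(\cdot, z)$ such that, for $\mu$-a.e.\ $z \in \bbR$, $\psi(\cdot, z)$ solves $\Lambda_\varphi \psi(\cdot, z) = z \psi(\cdot, z)$ in the distributional sense, respects the boundary condition $\psi_1(0,z) = \psi_2(0,z)$ of \eqref{15jun1}, and satisfies a weighted polynomial $L^2$ bound of the form
\[
\int_0^\infty \lVert \psi(x,z) \rVert^2 (1+x)^{-1-\epsilon}\, dx < \infty
\]
for some (in fact every) $\epsilon > 0$. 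Because the Dirac eigenvalue equation is a $2\times 2$ first-order system whose solution space is two-dimensional, and the boundary condition cuts it down to one dimension, $\psi(\cdot, z)$ must be a scalar multiple of the Dirichlet solution $U(\cdot, z)$; hence the weighted bound is inherited by $U(\cdot, z)$.

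From the weighted $L^2$ bound I would then rule out positive exponential growth of $\lVert U(\cdot, z)\rVert$ directly. If $\gamma := \limsup_{x\to\infty}\frac{1}{x}\log \lVert U(x,z)\rVert > 0$ for some $z$ in the $\mu$-full-measure set above, then along a subsequence $x_n \to \infty$ one has $\lVert U(x_n,z)\rVert \ge e^{\gamma x_n/2}$, and Gronwall's inequality applied to the Dirac system (using \eqref{varphiL2locunif} for local uniform control of the coefficient $\varphi$) forces $\lVert U(x,z)\rVert \gtrsim e^{\gamma x_n /2}$ on a unit-length neighborhood of each $x_n$. This contradicts finiteness of the weighted integral, so $\gamma \le 0$ for $\mu$-a.e.\ $z$. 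Applying Theorem~\ref{RegularityTFAE}, (iii) $\Rightarrow$ (i), now yields regularity of $\Lambda_\varphi$.

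The main obstacle will be setting up the Berezanskii expansion rigorously in this $\bbC^2$-valued half-line Dirac setting and checking that the generalized eigenfunction associated to the spectral measure $\mu$ is genuinely a scalar multiple of $U(\cdot,z)$ (rather than, say, a rotated solution failing the boundary condition). The scalar half-line theory of such expansions is classical, so this adaptation should be technical but routine; the remainder of the argument is then a short deduction.
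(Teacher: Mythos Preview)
Your proposal is correct and takes essentially the same approach as the paper: establish that $\limsup_{x\to\infty}\frac{1}{x}\log\lVert U(x,z)\rVert\le 0$ for $\mu$-a.e.\ $z$, transfer this via absolute continuity to harmonic-a.e.\ $z\in\E$, and invoke condition (iii) of Theorem~\ref{RegularityTFAE}. The paper has already isolated the polynomial-growth statement as a separate Schnol-type lemma (Lemma~\ref{lemmaSchnol}), proved via the resolvent kernel and the explicit eigenfunction transform for $\Lambda_\varphi$, which directly yields the weighted $L^2$ bound for $U$ without needing to argue after the fact that the Berezanskii generalized eigenfunction coincides with $U$; once that lemma is in hand, the Widom criterion is a two-line citation.
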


In Section~\ref{sectionEigensolutions}, we study the asymptotics of eigensolutions and prove a key two-term expansion for Dirichlet eigensolutions. Section~\ref{sectionLimits1} uses this to study subsequential limits of $\frac 1x \log \lVert U(x,z)\rVert$ and, in particular, prove Theorems~\ref{thm11}, \ref{thm12}, and \ref{thm13}. Section~\ref{sectionvague} considers the vague convergence of the measures $\sigma_x$, and Section~\ref{sectionLimits2} combines these techniques to prove all the remaining theorems.

\section{Dirac operators} \label{sectionDiracBackground}

We use the triple norm notation for uniform local $L^p$ bounds: for $p \in [1,\infty)$, we denote
\[
\vertiii{\varphi}_p = \sup_{x \ge 0} \left( \int_x^{x+1} \lvert \varphi(t) \rvert^p \,dt \right)^{1/p}.
\]
In particular, the condition \eqref{varphiL2locunif} is $\vertiii{\varphi}_2 < \infty$. The set of functions which obey $\vertiii{\varphi}_p < \infty$ is customarily denoted $L^p_{\loc,\unif}([0,\infty))$.

In this section we will collect some fundamental properties of Dirac operators in our setting. We assume $\vertiii{\varphi}_1 < \infty$ unless stated otherwise, and always assume that $\varphi$ is defined on the interval appropriate for the current discussion and regular at any finite endpoints.

We write the form \eqref{ZSO} as
\[
\Lambda_\varphi = - i j \partial_x + \Phi, \qquad j = \begin{pmatrix} -1 & 0 \\ 0 & 1 \end{pmatrix}, \qquad \Phi = \begin{pmatrix} 0 & \varphi \\ \bar \varphi & 0 \end{pmatrix}.
\]
We call $V = \binom{v_1}{v_2}$ an eigensolution of $\Lambda_\varphi$ at energy $z$ if $V$ is a locally absolutely continuous function and $V$ satisfies
\begin{equation}
-i j \partial_x V + \Phi V = z V. \label{eigenequation} 
\end{equation}
We define the Wronskian of two eigensolutions $U,V$ as 
 \[
W(U,V) = U^t JV, \qquad J= \begin{pmatrix} 0 & i \\ -i & 0 \end{pmatrix}.
 \]
If $U, V$ are eigensolutions at the same energy $z$, a direct calculation of $\partial_x W(U,V)$ shows that the Wronskian is independent of $x$, and the Wronskian is nonzero if and only if $U,V$ are linearly independent.

Similarly, we define transfer matrices $T(x,z) = T(x,z,\varphi)$ by
\begin{equation}\label{19nov2}
-i j \partial_x T + \Phi T = z T, \qquad T(0,z) = I,
\end{equation}
where the Wronskian being independent of $x$ implies that $\det T(x,z) \equiv I.$ Further, we have that 
\begin{equation}\label{19oct1}
j - T(x,z)^* j T(x,z) = 2 \Im z \int_0^x T(y,z)^* T(y,z) dy.
\end{equation}
For any $z \in \bbC_+$, \eqref{19oct1} is strictly increasing in $x$, so the corresponding Weyl disks
\begin{equation}\label{19nov1}
D(x,z) = \left\{ w \mid \begin{pmatrix} w \\ 1 \end{pmatrix}^* T(x,z)^* j T(x,z)  \begin{pmatrix} w \\ 1 \end{pmatrix} \ge 0 \right\}
\end{equation}
are strictly nested, that is, $D(x_2,z) \subset \intt D(x_1,z)$ if $x_1 < x_2$. It follows from \eqref{19nov1} that  $D(0,z) = \overline{\bbD}$, so our Weyl disks are a nested family of disks in $\overline{\bbD}$. We will need a more uniform statement about the strict nesting:

\begin{lemma}\label{disks.as.fn.of.phi}
For any $R < \infty$, there exists $\delta > 0$ such that if ${\varphi \in L^2([0,1])}$ with ${\int_0^1 \lvert \varphi(y)\rvert^2 dy \le R}$, then ${D(1,z,\varphi) \subset \{ w \mid \lvert w \rvert \le 1 - \delta \}}$.
\end{lemma}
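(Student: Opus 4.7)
The plan is to make the strict nesting of Weyl disks implied by \eqref{19oct1} quantitative, by combining that identity with a Grönwall-type lower bound on eigensolutions. Since the lemma is stated with $z$ as a free parameter but the disk $D(1,z,\varphi)$ expands to all of $\overline{\bbD}$ as $\Im z \to 0^+$, I read the statement as fixing $z \in \bbC_+$ and allowing $\delta$ to depend on both $R$ and $z$ (the lemma is ultimately applied at fixed $z$).

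First I would parametrize the disk: $w \in D(1,z,\varphi)$ iff the eigensolution $V(y) := T(y,z,\varphi)\binom{w}{1}$ satisfies $V(1)^{*} j V(1) \ge 0$. Using $\binom{w}{1}^{*} j \binom{w}{1} = 1 - |w|^2$, applying identity \eqref{19oct1} to the vector $\binom{w}{1}$ and discarding the nonnegative term $V(1)^{*} j V(1)$ gives
\[
1 - |w|^2 \;\ge\; 2 \Im z \int_0^1 \|V(y)\|^2\,dy.
\]
The task then reduces to bounding the right-hand integral from below, uniformly in $\varphi$ satisfying $\int_0^1 |\varphi|^2 \le R$.

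Next I would derive a Grönwall lower bound on $\|V(y)\|$. Since $\Phi(y)$ is self-adjoint of operator norm $|\varphi(y)|$ and $j$ is unitary, the eigenequation $\partial_y V = ij(z - \Phi(y))V$ gives $\|\partial_y V(y)\| \le (|z| + |\varphi(y)|)\|V(y)\|$ and hence the a.e.\ estimate $|\partial_y \|V(y)\|| \le (|z| + |\varphi(y)|) \|V(y)\|$, whence
\[
\|V(y)\| \;\ge\; \|V(0)\| \exp\!\Big(-\!\int_0^y (|z| + |\varphi(s)|)\,ds\Big).
\]
Since $\|V(0)\|^2 = 1 + |w|^2 \ge 1$ and Cauchy--Schwarz yields $\int_0^1 |\varphi(s)|\,ds \le \sqrt{R}$, I obtain for all $y \in [0,1]$ a positive bound depending only on $|z|$ and $R$, and therefore $\int_0^1 \|V(y)\|^2 dy \ge e^{-2(|z| + \sqrt{R})}$.

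Combining the two steps yields $|w|^2 \le 1 - 2\,\Im z\cdot e^{-2(|z|+\sqrt{R})}$, which proves the lemma with, for instance, $\delta := 1 - \sqrt{1 - 2\Im z\, e^{-2(|z| + \sqrt{R})}} > 0$. The only mildly delicate point is the a.e.\ chain-rule bound $|\partial_y \|V(y)\|| \le \|\partial_y V(y)\|$ feeding into Grönwall, which is standard since $V$ is locally absolutely continuous and so is $\|V\|^2$; the rest is direct from the disk identity and elementary estimates.
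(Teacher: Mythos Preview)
Your proof is correct and takes a genuinely different route from the paper's. The paper argues by soft compactness: it invokes that $\varphi \mapsto T(1,z,\varphi)$ is a compact map on $L^2([0,1])$ (weak-to-strong continuous), writes the maximal modulus $\rho(z,\varphi) = \max\{|w| : w \in D(1,z,\varphi)\}$ as an explicit continuous function of the entries of $T(1,z,\varphi)$, and concludes that $\rho$ attains a maximum strictly below $1$ on the weakly compact ball $\{\|\varphi\|_2 \le \sqrt{R}\}$. Your approach instead makes the energy identity \eqref{19oct1} quantitative via a Gr\"onwall lower bound on $\|V(y)\|$, yielding the explicit estimate $1 - |w|^2 \ge 2\,\Im z\, e^{-2(|z|+\sqrt{R})}$. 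Your argument is more elementary (it avoids the compact-map input from \cite{GreKap}) and gives an explicit $\delta = \delta(z,R)$, which the paper's compactness argument does not; the paper's version, on the other hand, isolates the structural reason---continuity in the weak topology---which is a reusable tool. Your reading that $z \in \bbC_+$ is fixed is correct and matches how the lemma is applied later. The chain-rule step is unproblematic here since $V(y) = T(y,z)\binom{w}{1}$ never vanishes (as $\det T \equiv 1$), so $\|V\|$ is absolutely continuous with $|\partial_y\|V\|| \le \|\partial_y V\|$ a.e.
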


\begin{proof}
Using the series representation of the solution of the initial value problem \eqref{19nov2}, it is proved in \cite[Prop. 1.2]{GreKap} that $T(1,z,\varphi)$ is a compact map of $\varphi \in L^2([0,1])$, in the sense that $\varphi_n \wto \varphi$ implies $T(1,z,\varphi_n) \to T(1,z,\varphi)$. In particular, $T$ maps weakly compact sets to compact sets. Moreover, the maximum distance from the origin to elements of $D(1,z)$ is an explicit function of the matrix entries, since for any disk $D_M$ represented in projective coordinates as $\binom{w_1}{w_2}^*M\binom{w_1}{w_2}$ with $M = \begin{pmatrix} m_1 & m_2 \\ m_3 & m_4 \end{pmatrix}$ and $m_1 < 0$, $M^* = M$, $\det M < 0$,  $D_M$ is a proper disk in $\C$. Its radius $r$ and center $c$ are given by
\[
	r = -\frac{\sqrt{-\det M}}{m_1}, \qquad  c = -\frac{m_2}{m_1}.
\]
In this notation, $D(1,z) = D_{T(1,z)^*jT(1,z)}$, so calling $T(1,z) = \begin{pmatrix} t_1 & t_2 \\ t_3 & t_4 \end{pmatrix},$ we find that the radius and center of $D(1,z)$ are	
\[
	r = \frac{1}{|t_1|^2 - |t_3|^2} , \qquad c = \frac{\overline{t_3}t_4 - \overline{t_1}t_2}{|t_1|^2 - |t_3|^2}.
\]
Here it is important to note that the identity \eqref{19oct1} implies $j - T^* j T > 0$ and, by computing its top left entry, that $|t_1|^2 - |t_3|^2 > 1$. Thus, the maximum distance from the origin to elements of $D(1,z)$ is given by
\[
\rho(z,\varphi) := \max \{ \lvert w \rvert \mid w \in D(1,z,\varphi) \} = \frac{1+ |\overline{t_3}t_4 - \overline{t_1}t_2|}{|t_1|^2-|t_3|^2},
\]
which depends continuously on the entries of $T$. Since $\rho(z,\varphi) < 1$ for any $\varphi \in L^2([0,1])$, by compactness $\rho$ achieves a maximum $1-\delta < 1$ on the weakly compact set ${\{\varphi \in L^2([0,1]): \|\varphi\|_2 \leq R\}.}$   
\end{proof}

When $\vertiii{\varphi}_1 < \infty$, the operator is in the limit point case at $\infty$, i.e., the Weyl disks shrink to a point for every $z \in \bbC_+$. The intersections
\[
\{ s(z,\varphi) \} = \{ s(z) \} = \bigcap_{x \in (0,\infty)} D(x,z)
\]
define the spectral function $s(z)$ of the Dirac operator. This is a Schur function in the sense that it is an analytic function from $\bbC_+$ to $\bbD,$ and so we call $s(z)$ the \textit{Schur function for $\Lambda_\varphi$}. The same class of Schur functions appears in the study of canonical systems with signature matrix $j$ \cite{BLY1}.

\begin{remark}
The forms \eqref{ZSO}, \eqref{DO} are related by  $L_\varphi =  P^{-1} \Lambda_\varphi P$, where $P:= \begin{pmatrix} 1 & i \\ 1 & - i \end{pmatrix}$. Whereas we have noted that the Weyl disk in the Zakharov-Shabat gauge, $D(0,z)=\overline{\D},$ is defined by the inequality $\binom{w_1}{w_2}^*j\binom{w_1}{w_2} \geq 0,$ the correct definition of the Weyl disk in the Dirac gauge is defined by $\binom{w_1}{w_2}^* P\inv j P \binom{w_1}{w_2} \geq 0,$ or equivalently $2\Im (\overline{w_1}w_2) \geq 0,$ which places $w_2/w_1$ in $\C_+$. This explains why the Weyl disks for $\Lambda_\varphi$ are in $\bbD$ whereas those for $L_\varphi$ are in $\bbC_+$. This seemingly cosmetic difference makes the form $\Lambda_\varphi$ more convenient, and works based on the form $L_\varphi$ often pass to the form $\Lambda_\varphi$ in technical parts of the arguments \cite{ClaGes02}. The Weyl functions for $\Lambda_\varphi$ and $L_\varphi$ are related by a Cayley-type transform,
\begin{equation}\label{eqnCayley}
s(z) = \frac{1 + im(z)}{1 - im(z)} = -\frac{m(z) - i}{m(z) + i}.
\end{equation}
Similarly, self-adjoint boundary conditions for $\Lambda_\varphi$ correspond to $\gamma\in \bbC^2$ such that $\gamma^* j \gamma = 0$ so they are parametrized by the unit circle. In particular, the condition $f_1(0) = f_2(0)$ in \eqref{15jun1} corresponds to a Dirichlet boundary condition at $0$ for the corresponding operator $L_\varphi$, and accordingly we call an eigensolution of \eqref{eigenequation} obeying this boundary condition a \textit{Dirichlet solution}.
\end{remark}

In the limit point case, for any $z \in \bbC \setminus \bbR$, the \textit{Weyl solution} can be defined as a nontrivial eigensolution $\Psi$ of $\Lambda_\varphi$ such that
\[
s(z) = \frac{\psi_1(0,z)}{ \psi_2(0,z)}.
\]
This determines the Weyl solution uniquely up to normalization, and a calculation similar to \eqref{19oct1} gives $\Psi \in L^2([0,\infty), \C^2)$.  More generally, we can define
\[
s(x,z) = \frac{\psi_1(x,z)}{\psi_2(x,z)}
\]
and the definition of $D(x,z)$ implies that $s(x,z) \in \overline{\D}$ for all $x \in [0,\infty).$ In particular, $\psi_2(x,z)$ is the dominant component of $\Psi(x,z)$ for all $x \in [0,\infty).$ Note that, in contrast with the Schr\"odinger case, the Schur function is not the logarithmic derivative of the Weyl solution; this will be significant in proofs which capture the decay rate of the Weyl solution by taking the average of its logarithmic derivative.

From the identity \eqref{19oct1}, we see that for $c > x$ the condition
\begin{equation}
	U(c,z)^*j U(c,z) \geq 0 \label{weyl.disk.condition}
\end{equation} 
implies that $U(x,z)^* j U(x,z) > 0$, i.e., $s_U(x,z) := \frac{u_1(x,z)}{u_2(x,z)} \in \D.$ Thus for any solution $U$ obeying \eqref{weyl.disk.condition}, we call $s_U$ the \textit{Schur function for $U$}. With this definition, the Schur function, without decoration, is the Schur function for the Weyl solution $\Psi$. This definition allows an alternative characterization of $D(x,z)$:
\[
	D(x,z) = \left\{s_U(0,z): \Lambda_\varphi U = zU \text{ and } U(x,z)^*j U(x,z) \geq 0\right\}
\]
The radius of $D(x,z)$ decays exponentially as $x \to \infty$--to see this, one can use the above description of $D(x,z)$ together with the following differential equation that $s_U$ obeys: 

\begin{lemma}
For $V(x,z)$ an eigensolution of \eqref{eigenequation}, the Schur function for $V$, $s_V(x,z),$ obeys the following Riccati-type differential equation:
\begin{equation} \label{schur.riccati}
	- i \partial_x s_V(x, z) = \overline{\varphi(x)} s_V(x, z)^2 - 2 z s_V(x, z) + \varphi(x).
\end{equation}
\end{lemma}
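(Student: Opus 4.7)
This is a direct calculation. The plan is to write out the two scalar components of the eigenvalue equation $-ij\partial_x V + \Phi V = zV$, solve for $v_1'$ and $v_2'$ individually, then apply the quotient rule to $s_V = v_1/v_2$ and simplify.

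Concretely, the eigenvalue equation unpacks to the two first-order equations
\[
iv_1' + \varphi v_2 = zv_1, \qquad -iv_2' + \bar\varphi v_1 = zv_2,
\]
and solving for the derivatives gives $v_1' = -izv_1 + i\varphi v_2$ and $v_2' = izv_2 - i\bar\varphi v_1$. At any point where $v_2 \ne 0$ (which is automatic on the relevant interval, since $s_V$ being a Schur function takes values in $\overline{\mathbb D}$), the quotient rule yields
\[
s_V' = \frac{v_1'}{v_2} - s_V \frac{v_2'}{v_2} = (-izs_V + i\varphi) - s_V(iz - i\bar\varphi s_V) = i\bar\varphi s_V^2 - 2izs_V + i\varphi.
\]
Multiplying by $-i$ gives the claimed Riccati identity \eqref{schur.riccati}.

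There is essentially no obstacle here; the only things to watch are sign conventions arising from $j = \diag(-1,1)$ (so that the first row of $-ij$ is $(i,0)$ and the second row is $(0,-i)$) and making sure to invoke the fact that $V$ is absolutely continuous so that the derivative manipulations are justified pointwise almost everywhere. Since $V \in \AC_\loc$, the computation above holds a.e., and by absolute continuity $s_V$ itself is in $\AC_\loc$ on any interval where $v_2$ does not vanish, giving the differential equation in the almost everywhere sense. No further regularity of $\varphi$ is used beyond local integrability, which is implicit in $\vertiii{\varphi}_1 < \infty$.
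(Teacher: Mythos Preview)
Your proof is correct and is essentially the same direct computation as the paper's. The paper organizes it slightly differently---writing $V = v_2\binom{s_V}{1}$, substituting into the eigenequation, and left-multiplying by $(1,\, s_V)$ to cancel the $\partial_x v_2$ term---but this is just the quotient rule in matrix dress, and the arithmetic is identical.
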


\begin{proof}
Rewriting $V$ as $V(x,z) = v_2(x,z) \binom{ s_V(x,z) }{ 1}$, and using the fact that $V$ solves \eqref{eigenequation} gives
\[
\partial_x v_2(x,z) \binom{ i s_V(x,z)}{-i} + v_2(x,z) \binom{ i \partial_x s_V(x,z) }{0} + v_2(x,z) \binom{ \varphi(x) }{\overline{\varphi(x)} s_V(x,z)} = z v_2(x,z) \binom{ s_V(x,z) }{1}.
\]
Left multiplication by $\begin{pmatrix} 1 & s_V(x,z) \end{pmatrix}$ leads to \eqref{schur.riccati}.
\end{proof}

\begin{lemma}\label{schur.fxns.close}
If $U(x,z), V(x,z)$ solve \eqref{eigenequation} for $x \in [a,b]$ and satisfy \eqref{weyl.disk.condition} at $b$, their Schur functions $s_U(x,z)$ and $s_V(x,z)$ obey  
\[
	|s_U(a, z) - s_V(a, z)| \leq 2 \exp\Big(- 2 \Im (z) (b - a) + 2 \int_{a}^{b} |\varphi(t)| dt\Big).
\] 
In particular, the radius of $D(x,z)$ decays exponentially as $x \to \infty$ for $\Im z > \vertiii{\varphi}_1$.
\end{lemma}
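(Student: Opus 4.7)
The plan is to subtract the Riccati equations \eqref{schur.riccati} for $s_U$ and $s_V$ and exploit the factorization $s_U^2 - s_V^2 = (s_U - s_V)(s_U + s_V)$ together with the linear term $-2z(s_U - s_V)$; the constant terms $\varphi$ cancel. Writing $D(x) = s_U(x,z) - s_V(x,z)$ and $S(x) = s_U(x,z) + s_V(x,z)$, this produces the homogeneous linear equation
\[
\partial_x D(x) = i\, D(x)\bigl[\bar\varphi(x) S(x) - 2z\bigr].
\]
Since I only want to control $\lvert D\rvert$, I take the real part of $\partial_x \log D$, which gives
\[
\partial_x \log\lvert D(x)\rvert = \Re\bigl[i(\bar\varphi(x) S(x) - 2z)\bigr] = 2\Im z - \Im\bigl[\bar\varphi(x) S(x)\bigr].
\]

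First I will justify that $s_U(x,z), s_V(x,z) \in \overline{\bbD}$ for all $x \in [a,b]$. The discussion preceding the lemma explains that the Weyl disk condition $U(b,z)^* j U(b,z) \geq 0$ together with the monotonicity derived from \eqref{19oct1} forces $U(x,z)^* j U(x,z) \geq 0$, hence $\lvert s_U(x,z)\rvert \leq 1$, for all $x \leq b$, and analogously for $V$. Consequently $\lvert S(x)\rvert \leq 2$ and so
\[
\partial_x \log\lvert D(x)\rvert \geq 2\Im z - 2\lvert \varphi(x)\rvert.
\]
Integrating this inequality from $a$ to $b$ and rearranging,
\[
\log\lvert D(a)\rvert \leq \log\lvert D(b)\rvert - 2\Im z\,(b-a) + 2\int_a^b \lvert \varphi(t)\rvert\,dt.
\]
Finally, at $x=b$ both $s_U(b,z)$ and $s_V(b,z)$ lie in $\overline{\bbD}$, so $\lvert D(b)\rvert \leq 2$, and exponentiation yields the stated bound.

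For the final assertion on the radius of $D(x,z)$, I apply the bound with $a=0$, $b=x$ to arbitrary pairs of solutions $U,V$ satisfying \eqref{weyl.disk.condition} at $x$; by the description $D(x,z) = \{ s_U(0,z) : U(x,z)^* j U(x,z) \geq 0\}$, this bounds the diameter of $D(x,z)$ by $2\exp\bigl(-2\Im z\, x + 2\int_0^x \lvert \varphi(t)\rvert\,dt\bigr)$, and hence the radius by half this quantity. The local uniform bound $\int_0^x \lvert \varphi(t)\rvert\, dt \leq (x+1)\vertiii{\varphi}_1$ makes the exponent at most $2x(\vertiii{\varphi}_1 - \Im z) + 2\vertiii{\varphi}_1$, so the radius decays exponentially whenever $\Im z > \vertiii{\varphi}_1$. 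The only substantive point requiring care is the sign bookkeeping, in particular the identity $\Re(iw) = -\Im w$ that determines the favorable sign of $2\Im z$ in the growth estimate; everything else is routine.
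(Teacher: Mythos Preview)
Your proof is correct and in fact more direct than the paper's. Both arguments begin by subtracting the two Riccati equations to obtain the linear homogeneous ODE $\partial_x D = \Gamma D$ with $\Gamma = i(\bar\varphi S - 2z)$, but then diverge: the paper rewrites this as an integral equation with kernel $e^{2iz(t-x)}$, expands as a Volterra series, and bounds term by term using $|\Gamma + 2iz| = |\bar\varphi S| \le 2|\varphi|$ to recognize the exponential; you instead pass to $\partial_x \log|D|$ and integrate the pointwise inequality $\partial_x \log|D| \ge 2\Im z - 2|\varphi|$. Your route is shorter and avoids the series machinery entirely, at the cost of implicitly assuming $D$ does not vanish on $[a,b]$; this is harmless since the explicit integrating-factor solution $D(x) = D(b)\exp\bigl(\int_b^x \Gamma\bigr)$ shows $D$ is either identically zero (in which case the bound is trivial) or nowhere zero. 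The paper's series approach, while heavier here, is consistent with the Volterra expansions used elsewhere in Section~\ref{sectionEigensolutions}.
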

\begin{proof}
For $\Gamma(x, z) := i(\overline{\varphi(x)} (s_U(x, z) + s_V(x, z)) - 2z)$ and for $x \in [a,b]$, we use \eqref{schur.riccati} to obtain 
\[
	\partial_x(s_U(x, z) - s_V(x, z)) = \Gamma(x, z) (s_U(x, z) - s_V(x, z)).
\]
A solution $y(x, z) = s_U(x, z) - s_V(x, z)$ can be written as
\[
	y(x, z) = y(b,z) e^{2 i z (b - x)} - \int_{x}^{b} e^{2 i z (t - x)} (\Gamma(t, z) + 2 i z) y(t, z) dt.
\]
A Volterra-type argument yields the series expansion for $y$,
\[
	y(x, z) = y(b,z) e^{2 i z (b - x)} \left(1 + \sum_{n = 1}^{\infty} (-1)^n \int_{x}^{b} \int_{t_1}^{b} \cdots \int_{t_{n - 1}}^{b} \prod_{k = 1}^{n} (\Gamma (t_k, z) + 2 i z) d^n t\right).
\]
By the definition of $\Gamma(x, z),$ we have that $\Gamma(t, z) + 2 i z = i \overline{\varphi(t)}(s_U(t, z) + s_V(t, z))$. The condition \eqref{weyl.disk.condition} implies that $|s_U(x, z)|, |s_V(x,z)| \leq 1$ for all $x\in [a,b]$ and $z \in \bbC_+$. Evaluating $y$ at $x=a$, taking the modulus, and bounding $|s_U+s_V|$ and $|y(b,z)|$ by $2$, we realize the series as an exponential to prove the claimed bound. The statement about $D(x,z)$ follows by taking $a=0$ and $b=x.$ 
\end{proof}
We mentioned above that the logarithmic derivative of the Weyl solution is not the Schur function. However, with $s(x,z)$ the Schur function for the Weyl solution, from the second row of \eqref{eigenequation} we have 
\[
	\frac{ \partial_x \psi_2(x,z) }{\psi_2(x,z)} = i z - i \overline{\varphi(x)} s(x,z).
\]
This identity in fact holds replacing $\psi_2$ with $u_2$ and $s(x,z)$ with $s_U(x,z)$ for any solution $U$ obeying \eqref{weyl.disk.condition}. Consequently, this family of identities together with Lemma \ref{schur.fxns.close} provides an estimate on the difference of logarithmic derivatives of solutions obeying \eqref{weyl.disk.condition}. In particular, the Dirichlet solution will be accessible via the following reflection symmetry for Dirac operators: 

\begin{lemma}\label{reflection_symmetry}
For $-\infty\leq a<b\leq +\infty$, if $V^+$ is an eigensolution for \eqref{eigenequation} with operator data $\varphi$ on $(a,b)$ and $\tilde \varphi(x) := \overline{\varphi(-x)}$, then 
\[
	V^-(x,z;\tilde \varphi) :=\begin{pmatrix} 0 & 1 \\ 1 & 0 \end{pmatrix} V^+(-x,z;\varphi),
\]
is an eigensolution for $\eqref{eigenequation}$ with operator data $\tilde{\varphi}$ on $(-b, -a)$. 
\end{lemma}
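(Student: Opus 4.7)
The plan is a direct substitution exploiting two algebraic identities involving the swap matrix $\sigma_1 := \begin{pmatrix} 0 & 1 \\ 1 & 0 \end{pmatrix}$. First, since $j = \diag(-1,1)$ acts diagonally while $\sigma_1$ permutes the two coordinates, a one-line computation shows they anticommute: $\sigma_1 j = -j\sigma_1$. Second, writing $\tilde\Phi(x)$ for the potential matrix built from $\tilde\varphi(x) = \overline{\varphi(-x)}$, conjugation by $\sigma_1$ both swaps the off-diagonal entries and, combined with the definition of $\tilde\varphi$, replaces $\varphi(-x)$ by its conjugate; this yields the second key identity $\sigma_1 \Phi(-x) = \tilde\Phi(x)\,\sigma_1$.

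With these identities available, I would differentiate $V^-(x) = \sigma_1 V^+(-x)$ by the chain rule to obtain $\partial_x V^-(x) = -\sigma_1 (\partial_y V^+)(-x)$; local absolute continuity of $V^-$ on $(-b,-a)$ is immediate since it is produced from $V^+$ on $(a,b)$ by composition with the smooth map $x \mapsto -x$ and left multiplication by a constant matrix. Substituting into $-ij\partial_x V^-$ and pulling $\sigma_1$ through $j$ via the anticommutation relation gives
\[
-ij\partial_x V^-(x) = ij\sigma_1 (\partial_y V^+)(-x) = -i\sigma_1 j (\partial_y V^+)(-x).
\]
Invoking the eigenvalue equation for $V^+$ at the point $y=-x$ rewrites the right-hand side as $\sigma_1\bigl[ z V^+(-x) - \Phi(-x) V^+(-x)\bigr]$, and the conjugation identity $\sigma_1\Phi(-x) = \tilde\Phi(x)\sigma_1$ then converts the potential term, yielding $z V^-(x) - \tilde\Phi(x) V^-(x)$. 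Rearranging produces $-ij\partial_x V^-(x) + \tilde\Phi(x)V^-(x) = z V^-(x)$, which is precisely \eqref{eigenequation} with operator data $\tilde\varphi$ on $(-b,-a)$.

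No obstacle is expected; the lemma is essentially a two-line verification once the correct conjugation matrix $\sigma_1$ is identified. The conceptual content is the observation that for the Zakharov--Shabat form, spatial reflection $x \mapsto -x$ must be combined with complex conjugation of $\varphi$ and the coordinate swap $\sigma_1$ in order to preserve the form of the equation; this is the Dirac counterpart of the purely spatial reflection symmetry available for Schr\"odinger operators, where no conjugation or coordinate swap is needed.
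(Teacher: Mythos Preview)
Your proof is correct and essentially identical to the paper's: both are direct verifications that pull $\sigma_1$ through the operator using the anticommutation $\sigma_1 j = -j\sigma_1$ and the conjugation relation $\sigma_1\Phi(-x) = \tilde\Phi(x)\sigma_1$, then invoke the eigenequation for $V^+$ at $-x$. The paper compresses these two identities into a single displayed line rather than naming them separately, but the logical content is the same.
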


\begin{proof}
Applying $\Lambda_{\tilde{\varphi}}$ to $V^-(x,z;\tilde \varphi)$ yields
\begin{align*}
\left[-ij \partial_x + \begin{pmatrix} 0 & \overline{\varphi(-x)} \\ \varphi(-x) & 0 \end{pmatrix}\right] \begin{pmatrix} 0 & 1 \\1 & 0 \end{pmatrix} V^+(-x,z) &= \begin{pmatrix} 0 & 1 \\1 & 0 \end{pmatrix} \left[ij \partial_x + \begin{pmatrix} 0 & \varphi(-x) \\ \overline{\varphi(-x)} & 0 \end{pmatrix}\right]V^+(-x,z).
\end{align*}
Making the change of variable $t = -x$, since $V^+$ solves \eqref{eigenequation} with operator data $\varphi$ we have
\[
	\begin{pmatrix} 0 & 1 \\1 & 0 \end{pmatrix} \left[-ij \partial_t + \begin{pmatrix} 0 & \varphi(t) \\ \overline{\varphi(t)} & 0 \end{pmatrix}\right]V^+(t,z) = z \begin{pmatrix} 0 & 1 \\1 & 0 \end{pmatrix} V^+(t,z) = z V^-(x,z),
\]
which proves the claim.
\end{proof}

If we associate with the Dirichlet solution $U^+$ the eigensolution, $U^-,$ of \eqref{eigenequation} on the interval $(-\infty,0]$ with operator data $\tilde{\varphi}$ as in the lemma, then $U^-$ obeys \eqref{weyl.disk.condition}. Thus $u_1^-(x,z)/u_2^-(x,z) \in \D$ for $x<0$. Moreover, in order to make use of Lemma \ref{disks.as.fn.of.phi}, we can extend our definition of the Weyl disks to allow the case when $x<0$ as follows:
\[
	D^-(x,z,\varphi) := \left\{\frac{u_2(0,z)}{u_1(0,z)} \mid \Lambda_{\tilde{\varphi}}U=zU \text{ and } U(x,z)^* j U(x,z) \leq 0\right\} \text{ for } x<0.
\]
This definition is compatible with the reflection symmetry described in Lemma~\ref{reflection_symmetry}, which switches between Weyl disks $D^+$ and $D^-$; in particular, $D^-(x,z;\varphi)$ inherit the same estimates, like those in Lemma~\ref{disks.as.fn.of.phi}. Combining this reflection symmetry with the translations $\varphi_x(t) = \varphi(t+x)$, we will have robust uniform estimates at our disposal.  We use this to compare the rates of growth and decay of the Dirichlet and Weyl solutions:

\begin{lemma}
\label{u.times.psi.inequality}
For each $z \in \C_+$, the Weyl and Dirichlet solutions obey
\[
	C\inv < |u_1(x,z) \psi_2(x,z)| < C
\]
for a finite constant $C >1$ independent of $x \in [1,\infty)$. 
\end{lemma}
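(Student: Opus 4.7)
The plan is to exploit the invariance of the Wronskian $W(U,\Psi) = i(u_1\psi_2 - u_2\psi_1)$ together with uniform strict Weyl disk bounds from Lemma \ref{disks.as.fn.of.phi}. Denote the constant value $w := u_1(0,z)\psi_2(0,z) - u_2(0,z)\psi_1(0,z) = \psi_2(0,z)(1-s(0,z))$; this is nonzero because $|s(0,z)| < 1$ strictly (by Lemma \ref{disks.as.fn.of.phi} applied to $\varphi$ on $[0,1]$), while $\psi_2(0,z)\neq 0$ since $|\psi_1/\psi_2| = |s|$ is finite. Dividing $u_1\psi_2 - u_2\psi_1 = w$ by $u_1\psi_2$ yields the central identity
\[
u_1(x,z)\,\psi_2(x,z) = \frac{w}{1 - (u_2(x,z)/u_1(x,z))\,s(x,z)}.
\]
These divisions are legitimate for $x > 0$: the reflection argument preceding the lemma forces $|u_2(x,z)/u_1(x,z)| < 1$, so $u_1(x,z) \neq 0$, and $\psi_2(x,z) \neq 0$ follows from $|s(x,z)| \le 1$.

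The lower bound is immediate: $|u_2/u_1|, |s| \le 1$ give $|1-(u_2/u_1)s| \le 2$, hence $|u_1\psi_2| \ge |w|/2$. For the upper bound I need both Schur-type ratios to be strictly bounded away from $1$ uniformly in $x\ge 1$. For $s(x,z)$ this is a direct application of Lemma \ref{disks.as.fn.of.phi} to the shifted data $\varphi_x(t) := \varphi(t+x)$: since $s(x,z) \in D(1,z,\varphi_x)$ and $\int_0^1 |\varphi_x(t)|^2\,dt \le \vertiii{\varphi}_2^2$, the lemma yields a single $\delta > 0$ (depending only on $z$ and $\vertiii{\varphi}_2$) with $|s(x,z)| \le 1-\delta$ for all $x \ge 0$.

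The bound for $|u_2(x,z)/u_1(x,z)|$ is the subtler part and uses a reflection argument. Fix $x \ge 1$, set $\tilde\varphi_x(t) := \overline{\varphi(x-t)}$ for $t \in [0,x]$, and define
\[
\tilde U(t) := \begin{pmatrix} 0 & 1 \\ 1 & 0 \end{pmatrix} U(x-t,z), \qquad t \in [0,x].
\]
A computation patterned on the proof of Lemma \ref{reflection_symmetry} shows that $\tilde U$ solves $\Lambda_{\tilde\varphi_x} \tilde U = z\tilde U$, and the Dirichlet condition $U(0,z) = \begin{pmatrix} 1 \\ 1 \end{pmatrix}$ becomes $\tilde U(x) = \begin{pmatrix} 1 \\ 1 \end{pmatrix}$, so $\tilde U(x)^* j \tilde U(x) = 0$. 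Hence $\tilde U_1(0)/\tilde U_2(0) = u_2(x,z)/u_1(x,z)$ lies on the boundary of $D(x,z,\tilde\varphi_x)$, and by the nesting $D(x,z,\tilde\varphi_x) \subset D(1,z,\tilde\varphi_x)$ (valid because $x \ge 1$), in $D(1,z,\tilde\varphi_x)$. Since $\int_0^1 |\tilde\varphi_x(t)|^2\,dt = \int_{x-1}^x |\varphi(\tau)|^2\,d\tau \le \vertiii{\varphi}_2^2$, a second application of Lemma \ref{disks.as.fn.of.phi} gives $|u_2(x,z)/u_1(x,z)| \le 1-\delta$ uniformly in $x \ge 1$. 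Combining the two bounds yields $|1 - (u_2/u_1) s| \ge 1 - (1-\delta)^2 > 0$ and hence $|u_1\psi_2| \le |w|/(1-(1-\delta)^2)$; taking $C = \max\bigl(2/|w|,\,|w|/(1-(1-\delta)^2),\,2\bigr)$ completes the argument. The main obstacle is verifying the reflection step rigorously: one must check that $\tilde U$ genuinely solves the Dirac equation with data $\tilde\varphi_x$, and that its Schur-type boundary value at $t=0$ identifies with $u_2(x,z)/u_1(x,z)$, so that Lemma \ref{disks.as.fn.of.phi} can be applied uniformly in $x$.
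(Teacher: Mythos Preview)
Your proof is correct and follows essentially the same approach as the paper: both arguments express $u_1\psi_2$ via the constant Wronskian divided by $1-(u_2/u_1)(\psi_1/\psi_2)$, obtain the lower bound from $|(u_2/u_1)(\psi_1/\psi_2)|\le 1$, and obtain the upper bound by applying Lemma~\ref{disks.as.fn.of.phi} to the shifted data $\varphi_x$ for the Weyl ratio and to the reflected data (your $\tilde\varphi_x$, the paper's $D^-(1,z,\varphi_x)$) for the Dirichlet ratio. The only cosmetic difference is that the paper bounds the product $(u_2/u_1)s$ by $1-\delta$ directly, while you bound each factor by $1-\delta$ and multiply; the resulting constants differ but the logic is identical.
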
 
\begin{proof}
For a fixed $z \in \C_+$, $(u_2/u_1)(x,z)$ and $(\psi_1/\psi_2)(x,z)$ are Schur functions for any $x>0$. Thus, $|1- \frac{u_2 \psi_1}{u_1 \psi_2}(x,z)| <2$ and so 
\[
	|u_1(x,z)\psi_2(x,z)| > \frac{1}{2}|u_1(x,z)\psi_2(x,z)| \left\lvert 1- \frac{u_2 \psi_1}{u_1 \psi_2}(x,z) \right\rvert = \frac{1}{2}|W(U,\Psi)|,
\]
where the Wronskian $W$ is $x$-independent and nonzero since $U$ and $\Psi$ are linearly independent solutions. The argument for an upper bound is similar once we have a lower bound on $|1- \frac{u_2 \psi_1}{u_1 \psi_2}|$. By Lemma \ref{disks.as.fn.of.phi} there is a $\delta >0$ such that for every ${\|\tilde{\varphi}\|_{L^2([0,1])} \leq \vertiii{\varphi}_2,}$ ${D(1,z,\tilde{\varphi}) \subset \{w\mid  \vert w \vert \leq 1-\delta\}}$. In particular, for every $x$, $D(1,z,\varphi_x)$ and $D^-(1,z,\varphi_x)$ are both subsets of ${\{w\mid \vert w\vert \leq 1-\delta\}}$. Since the functions $(\psi_1/\psi_2)(x,z)$ and $(u_2/u_1)(x,z)$ are elements of $D(1,z, \varphi_x)$ and $D^-(1,z,\varphi_x),$ respectively, their product is bounded uniformly in $x\in [1,\infty)$ by $1-\delta$. We conclude
\[
	|u_1(x,z)\psi_2(x,z)| = |u_1(x,z)\psi_2(x,z)| \left|\frac{1- \frac{u_2 \psi_1}{u_1 \psi_2}}{1- \frac{u_2 \psi_1}{u_1 \psi_2}}\right| < \frac{1}{\delta} |W(U,\Psi)|,
\]
so that taking $C = \frac{1}{\delta} |W(U,\Psi)|$ gives the lemma.
\end{proof}
 
We will need a function symmetric in conjugation of $z$, that is, a function obeying
\begin{equation}
	h(\overline{z}) = h(z), \label{hsymmetry}
\end{equation} 
which shares the asymptotic behavior of $|u_1(x,z)|$. Properties of the equation \eqref{eigenequation} and the above reflection symmetry for solutions imply that $h(x,z) = \frac{1}{x} \log |u_1(x,z) - u_2(x,z)|$ exhibits this symmetry, and in order to study the asymptotic behavior of $h$ via the asymptotic behavior of $\frac{1}{x} \log \lvert u_1(x,z)\rvert$, we will need the following lemma: 

\begin{lemma}\label{pass.to.new.h}
For each $z \in \C_+$, there exists a finite positive constant $C$ such that
\[
	C\inv \leq \frac{|u_1(x,z) - u_2(x,z)|}{|u_1(x,z)|} \leq C
\]
\end{lemma}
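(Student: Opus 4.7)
The plan is to rewrite the quantity in a Schur-function form and then bound it above and below using the reflection symmetry and the uniform Weyl-disk shrinkage that have already been established. Specifically, I would write
\[
\frac{|u_1(x,z) - u_2(x,z)|}{|u_1(x,z)|} = \left| 1 - \frac{u_2(x,z)}{u_1(x,z)} \right|
\]
and analyze the quotient $u_2/u_1$, which is the key quantity controlled by the reflection symmetry of Lemma~\ref{reflection_symmetry}.

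First I would deal with the upper bound. By Lemma~\ref{reflection_symmetry}, the Dirichlet solution $U^+$ on $[0,\infty)$ corresponds, after the conjugation by $\begin{pmatrix} 0 & 1 \\ 1 & 0 \end{pmatrix}$ and reflection $x \mapsto -x$, to an eigensolution $U^-$ on $(-\infty,0]$ with operator data $\tilde\varphi$ satisfying the Weyl-disk condition \eqref{weyl.disk.condition}. As already observed in the excerpt, this translates into $u_1^-(x,z)/u_2^-(x,z) \in \bbD$ for $x < 0$, which via the explicit relation $u_1^-(x) = u_2^+(-x)$, $u_2^-(x) = u_1^+(-x)$ means $u_2(y,z)/u_1(y,z) \in \bbD$ for all $y > 0$. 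The triangle inequality then gives $|1 - u_2/u_1| \le 1 + |u_2/u_1| \le 2$.

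For the lower bound I would invoke the uniform Weyl-disk shrinkage exactly as in the proof of Lemma~\ref{u.times.psi.inequality}. Applying Lemma~\ref{disks.as.fn.of.phi} to the translates $\varphi_x(t) = \varphi(t+x)$, whose $L^2([0,1])$-norms are uniformly bounded by $\vertiii{\varphi}_2$, produces a single $\delta > 0$ (depending on $z$ and $\vertiii{\varphi}_2$) such that $D^-(1,z,\varphi_x) \subset \{ w : |w| \le 1 - \delta \}$ for every $x \ge 0$. Since $(u_2/u_1)(x,z) \in D^-(1,z,\varphi_x)$, this yields $|u_2(x,z)/u_1(x,z)| \le 1 - \delta$ for every $x \ge 1$, and therefore
\[
\left| 1 - \frac{u_2(x,z)}{u_1(x,z)} \right| \ge 1 - \left| \frac{u_2(x,z)}{u_1(x,z)} \right| \ge \delta.
\]
Taking $C = \max(2, 1/\delta)$ completes the argument on $[1,\infty)$, which is the range relevant for the subsequent application to the asymptotic behavior of $\frac{1}{x}\log|u_1(x,z) - u_2(x,z)|$.

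There is essentially no serious obstacle here: the lemma is a direct consequence of the fact that $u_2/u_1$ is a Schur-type function bounded \emph{uniformly} away from the unit circle for $x \ge 1$, which is exactly the content extracted from Lemma~\ref{disks.as.fn.of.phi} via the reflection symmetry. The one subtlety worth flagging is that the lower bound fails at $x=0$ (where $u_1 = u_2 = 1$), so the estimate has to be understood on $[1,\infty)$ — matching the convention already used in Lemma~\ref{u.times.psi.inequality}.
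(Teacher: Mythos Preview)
Your proof is correct and takes essentially the same approach as the paper: both argue via the Schur function $u_2/u_1$, using the reflection symmetry for the upper bound and the uniform shrinkage $\lvert u_2/u_1\rvert \le 1-\delta$ from Lemma~\ref{disks.as.fn.of.phi} (as in the proof of Lemma~\ref{u.times.psi.inequality}) for the lower bound. Your observation that the estimate should be read on $[1,\infty)$ is also consistent with how the paper uses the lemma.
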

\begin{proof}
The triangle inequality yields 
\[
	\left\lvert 1 - \frac{|u_2(x,z)|}{|u_1(x,z)|} \right\rvert \leq \frac{|u_1(x,z) - u_2(x,z)|}{|u_1(x,z)|} \leq 1 + \frac{|u_2(x,z)|}{|u_1(x,z)|}.
\]
The left hand side can be bounded below by the same $\delta$ as in the proof of Lemma \ref{u.times.psi.inequality} and the right hand side can be bounded above by two, since $\frac{u_2(x,z)}{u_1(x,z)}$ is a Schur function.
\end{proof}

We will also need a Schnol-type result:

\begin{lemma}\label{lemmaSchnol}
Let $\Lambda_\varphi$ be a half-line Dirac operator such that  \eqref{varphiL2locunif} holds. Let $\mu$ denote its maximal spectral measure. Fix $\kappa > 1/2$. For $\mu$-a.e.\ $z$, $\lVert U(x,z) \rVert = O(x^\kappa)$ as $x \to \infty$.
\end{lemma}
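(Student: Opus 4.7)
This is a Dirac-operator analog of the classical Schnol--Berezanski--Glazman polynomial boundedness of generalized eigenfunctions. The plan is to apply Berezanski's nuclear spectral theorem in a Gelfand triple tailored to $\Lambda_\varphi$ and $\kappa$, then convert the abstract bound into a pointwise bound on $U$ via the ODE.

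First, set up a Gelfand triple $\cH_+ \subset L^2([0,\infty), \C^2) \subset \cH_-$ where
\[
\cH_+ = \{ f \in D(\Lambda_\varphi) : (1+x)^\kappa f \in L^2([0,\infty), \C^2) \}
\]
with norm $\lVert f \rVert_{\cH_+}^2 = \int_0^\infty (1+x)^{2\kappa} \lVert f(x) \rVert^2 \,dx + \lVert (i + \Lambda_\varphi) f \rVert_{L^2}^2$. The key technical point is that the embedding $\cH_+ \hookrightarrow L^2$ is Hilbert--Schmidt when $\kappa > 1/2$. This reduces to showing that $(1+x)^{-\kappa}(i+\Lambda_\varphi)^{-1}$ is Hilbert--Schmidt on $L^2$, which follows from the exponential decay in $\lvert x-y\rvert$ of the resolvent kernel of $\Lambda_\varphi$ (expressible via $U$ and the $L^2$ Weyl solution $\Psi$, together with Lemma~\ref{u.times.psi.inequality}) and the fact that $\int_0^\infty (1+x)^{-2\kappa} \,dx < \infty$ for $\kappa > 1/2$.

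Second, invoke the nuclear spectral theorem of Gelfand--Berezanski--Maurin for the self-adjoint operator $\Lambda_\varphi$ on $L^2$: there exists a system of generalized eigenfunctionals $\Phi_z \in \cH_-$ for $\mu$-a.e.\ $z$, satisfying $\Phi_z(\Lambda_\varphi f) = z \Phi_z(f)$ for all $f \in \cH_+$. Since the boundary condition $f_1(0) = f_2(0)$ selects a one-dimensional space of formal solutions of $\Lambda_\varphi U = zU$, the functional $\Phi_z$ is proportional to $f \mapsto \int_0^\infty U(x,z)^T J f(x) \,dx$, where $U$ is the Dirichlet solution and $J = \bigl(\begin{smallmatrix} 0 & i \\ -i & 0 \end{smallmatrix}\bigr)$.

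Third, convert to a pointwise bound. The continuity $\lvert \Phi_z(f) \rvert \leq C_z \lVert f \rVert_{\cH_+}$ applied to a smooth bump $f_{x_0,v} \in \cH_+$ with unit value $v \in \C^2$ localized near $x_0 \geq 0$ gives $\lvert \int U(x,z)^T J v \, f_{x_0,v}(x) \,dx \rvert \leq C_z \lVert f_{x_0,v} \rVert_{\cH_+} \leq C_z'(1+x_0)^\kappa$. Varying $v$ over an orthonormal basis of $\C^2$ controls $\int_{x_0}^{x_0+2} \lVert U(x,z) \rVert \,dx$ by $C_z''(1+x_0)^\kappa$. The uniform local $L^2$ bound $\vertiii{\varphi}_2 < \infty$ with Gr\"onwall applied to $\Lambda_\varphi U = zU$ then yields $\sup_{x\in[x_0, x_0+2]} \lVert U(x,z) \rVert \leq C_z'''(1+x_0)^\kappa$, hence $\lVert U(x,z) \rVert = O(x^\kappa)$.

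The main obstacle is the verification of the Hilbert--Schmidt property in the first step. It requires a careful decomposition of the resolvent kernel in terms of the Weyl and Dirichlet solutions, and use of $\Psi \in L^2$, of the uniform bounds on Weyl disks from Lemma~\ref{disks.as.fn.of.phi}, and of the uniform local $L^2$ of $\varphi$ to establish exponential decay in $\lvert x-y \rvert$ of the kernel uniformly in $x$. The exponent threshold $\kappa > 1/2$ appears naturally as the integrability threshold for $(1+x)^{-2\kappa}$.
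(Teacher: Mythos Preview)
Your approach via the Berezanski--Gelfand--Maurin nuclear spectral theorem is a valid and classical route to Schnol-type bounds, but it is genuinely different from the paper's argument, which is considerably more elementary. The paper avoids rigged Hilbert spaces entirely: it observes that $D(\Lambda_\varphi)\subset H^1\subset L^\infty$, applies the uniform boundedness principle to conclude $\sup_{x}\int\lVert G(x,y;z)\rVert^2\,dy<\infty$, and then uses the eigenfunction expansion $\cU$ directly to rewrite this as $\sup_x\int\frac{\lVert U(x,\lambda)\rVert^2}{\lvert\lambda-z\rvert^2}\,d\mu(\lambda)<\infty$. Multiplying by $(1+x^2)^{-\kappa}$, integrating in $x$, and applying Fubini gives the weighted $L^2$ bound on $U$, after which the Gr\"onwall step is identical to yours. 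What your approach buys is generality (the nuclear machinery applies to many operators without case-by-case kernel analysis); what the paper's approach buys is a short self-contained argument with no appeal to abstract spectral theory beyond the eigenfunction transform.

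One point in your outline deserves care: you assert uniform exponential decay in $\lvert x-y\rvert$ of the resolvent kernel as a consequence of Lemma~\ref{u.times.psi.inequality}. That lemma only gives $\lvert u_1(x,z)\psi_2(x,z)\rvert\asymp 1$ at coinciding arguments; to get decay in $\lvert x-y\rvert$ you also need $\lvert\psi_2(y)/\psi_2(x)\rvert\le Ce^{-c(y-x)}$, which from $\partial_x\log\psi_2=iz-i\bar\varphi s$ requires $\Im z>\vertiii{\varphi}_1$. This is harmless (replace $(i+\Lambda_\varphi)^{-1}$ by $(iR+\Lambda_\varphi)^{-1}$ for $R$ large), but note that the Hilbert--Schmidt property you actually need is weaker than exponential decay---uniform boundedness of $\int\lVert G(x,\cdot;z)\rVert^2$ in $x$ suffices, and the paper obtains this in one line from the closed graph/uniform boundedness principle rather than from kernel estimates.
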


\begin{proof}
Fix $z \in \bbC \setminus \bbR$. The resolvent of a Dirac operator is an integral operator,
\[
((\Lambda_\varphi -z)^{-1} f)(x) = \int G(x,y;z) f(y) dy.
\]
Note that $G(x,y;z)$ is matrix-valued.

Denote  $e_1 =\binom 10$, $e_2 = \binom 01$.  By \eqref{15jun1}, $D(\Lambda_\varphi) \subset L^\infty([0,\infty),\bbC^2)$. Thus, for any $f \in L^2([0,\infty), \bbC^2)$,
\[
\sup_{x\in [0,\infty)} \left\lvert \int e_j^* G(x,y;z) f(y) dy \right\rvert < \infty,
\]
so by the uniform boundedness principle,
\begin{equation}\label{25nov1}
\sup_{x \in [0,\infty)} \int \lVert e_j^* G(x,y;z) \rVert^2 \,dy < \infty.
\end{equation}
The eigenfunction expansion for the Dirac operator is a unitary map $\cU : L^2([0,\infty),\bbC^2) \to L^2(\bbR,d\mu)$ with $\mu$ the canonical maximal spectral measure, which conjugates $\Lambda_\varphi$ to the operator given by multiplication by $\lambda$ on $L^2(\bbR,d\mu(\lambda))$. This map is given  on compactly supported functions by ${(\cU f)(x) = \int U(x,\lambda)^* f(x) dx}$ and extended by continuity. For fixed $x > 0$, the functions $f_j(y) = G(x,y;z) e_j$ are mapped to $(\cU f_j)(\lambda) = \frac 1{\lambda - z} U(x,\lambda)^* e_j$ (formally, this follows from $\delta_x e_j \mapsto U(x,\lambda)^* e_j$ and the fact that $\cU$ conjugates $(\Lambda_\varphi-z)^{-1}$ to multiplication by $(\lambda -z)^{-1}$). In particular, since $\cU$ is unitary, computing squares of $L^2$ norms gives
\begin{equation}\label{25nov2}
 \int \lVert G(x,y;z) e_j \rVert^2 \,dy = \int  \frac {\lVert U(x,\lambda) e_j \rVert^2}{\lvert \lambda - z \rvert^2} d\mu(\lambda). 
\end{equation}
Summing \eqref{25nov1}, \eqref{25nov2} over $j=1,2$ gives statements for the Hilbert--Schmidt norm of $G(x,y;z)$; combining them gives
\[
\sup_{x\in [0,\infty)} \int  \frac {\lVert U(x,\lambda) \rVert^2}{\lvert \lambda - z \rvert^2} d\mu(\lambda) < \infty.
\]
Multiplying by the integrable function $(1+x^2)^{-\kappa}$ and integrating in $x$ gives
\[
\iint (1+x^2)^{-\kappa}  \frac {\lVert U(x,\lambda) \rVert^2}{\lvert \lambda - z \rvert^2} d\mu(\lambda) dx < \infty.
\]
Thus, by Fubini's theorem,
\begin{equation}\label{25nov4}
\int (1+x^2)^{-\kappa} \lVert U(x,\lambda) \rVert^2 dx < \infty, \qquad \mu\text{-a.e. }\lambda.
\end{equation}
Gronwall's inequality for $\Lambda_\varphi U = zU$ gives an estimate on $\lVert U(x,\lambda) \rVert \le C \int_x^{x+1} \lVert U(y,\lambda) \rVert dy$ with an $x$-independent value of $C$. Thus, \eqref{25nov4} implies $\lVert U(x,\lambda)\rVert = O(x^\kappa)$ for $\mu$-a.e.\ $\lambda$.
\end{proof}

\section{The Martin function}\label{sectionMartin}

In this section we will recall some facts about the Martin boundary and discuss in more detail the Akhiezer-Levin condition for unbounded closed subsets of $\R$. The semibounded case was considered in \cite{EL}; in the following let us assume that $\E \subset \bbR$ is a closed subset which is unbounded from below and from above. For more details on the potential theoretical concepts we will introduce below we refer to \cite{ArmGar01,RansPotential}. Let $\Omega=\C\setminus \E$, $z_0\in\Omega$ and denote the Green function of $\Omega$ with pole at $z_0$ by $G_\E(z,z_0)$. The Martin kernel normalized at $z_*\in\Omega$ is defined on $\Omega\times(\Omega\setminus\{z_*\})$ by 
\begin{align*}
M_\E(z,z_0)=\frac{G_\E(z,z_0)}{G_\E(z_*,z_0)}.
\end{align*}
The Martin compactification $\hat{\Omega}$ is the smallest metric compactification of $\Omega$ such that $M_\E(z,\cdot)$ can be continuously extended to the boundary $\partial^M\Omega=\hat{\Omega}\setminus\Omega$. We will also write $M_\E(z,z_0)$ for the extended function. Note that by the Harnack principle $\{M_\E(z,z_0)\}_{z_0\in\Omega}$ is precompact in the space of positive harmonic functions equipped with uniform convergence on compacts and thus for any $z_0\in \hat{\Omega}$, $M(\cdot,z_0)$ defines a positive harmonic function in $\Omega$. For $M$ a positive harmonic function in $\Omega$, we call $M$ minimal if for every positive harmonic function, $h$, with $0\leq h\leq M$, we can conclude that $h=cM$ for some $c\geq 0$. We define $\partial_1^M\Omega\subset\partial^M\Omega$ as the subset of the Martin boundary which consists of minimal harmonic functions. It is a general result from Martin theory that every positive harmonic function admits an integral representation in terms of the Martin kernel, that is, there exists a measure $\nu$ on $\partial_1^M\Omega$ such that 
\begin{align}\label{eq:MartinRepPosHarm}
h(z)=\int_{\partial_1^M\Omega}M_\E(z,x)d\nu(x),\quad h(z_*)=\nu(\partial_1^M\Omega).
\end{align}
The Martin boundary for Denjoy domains is quite intuitive. For any Euclidean point $x\in\E\cup\{\infty\}$ there are either one or two minimal Martin boundary points associated to it, depending how ``dense'' the set $\E$ is locally at this point. Roughly speaking, if $\E$ is sufficiently dense at $x$, then locally $\Omega$ splits into the two half planes $\bbC_+$ and $\bbC_-$ and we obtain a minimal Martin function for both of them. That is, in \cite[Theorem 6]{GardSjoed09} it is shown that there exists a map $\pi: \partial^M_1\Omega\to \E\cup \{\infty\}$ such that for every $x\in\E\cup\{\infty\}$,  $\#\pi^{-1}(\{x\})$ is either one or two.
The precise statement requires the concept of \textit{minimal thinness}.  If $A$ is a subset of the Martin boundary $\partial^M\Omega=\hat \Omega\setminus\Omega$, then we say a property, $P$, holds near $A$ if there is a Martin-neighborhood $A\subset W$ such that $P$ holds on $W\cap \Omega$. Then, for  $A\subset \hat\Omega$ and a positive superharmonic function $h$ on $\Omega$ we define the reduced function
\begin{align}\label{def:reduction}
	R_h^A(x)=\inf\{u(x):\ u\geq 0\text{ is superharmonic, } h\leq u \text{ on } A\cap \Omega\text{ and }h\leq u \text{ near } A\cap \partial^M\Omega\}
\end{align}
and $\hat R_h^A$ denotes its lower semicontinuous regularization. A set $A\subset \Omega$ is said to be minimally thin at $y\in \partial^M_1\Omega$ if 
\begin{align*}
	\hat R^A_{M_\E(\cdot,y)}\neq  M_\E(\cdot,y).
\end{align*}
Then $\#\pi^{-1}(\{x\})=2$ if and only if there is $y\in\pi^{-1}(\{x\})$  such that $\Omega\cap \R$ is minimally thin at $y$. 
For more information we refer the reader to \cite[Chapter 9.2]{ArmGar01} and to \cite{GardSjoed09} for a survey on Martin theory for Denjoy domains. 

We are particularly interested in the case $x=\infty$. In this case, we define 
\begin{align*}
\cM_\E(\infty)=\bigcap_{K>0}\overline{\{M_\E(z,z_0):\ z_0\in\Omega, |z_0|>K\}}^M
\end{align*}
as the set of all Martin functions associated to $\infty$. Let us also denote by $M_\E(z,\infty_{\pm})$ the possible two minimal Martin functions at $\infty$. It is well known that 
\begin{align}\label{eq:MartinFunctionSet}
\cM_\E(\infty)=\{\l_+M_\E(z,\infty_{+})+\l_-M_\E(z,\infty_{-}):\ \l_\pm \geq 0, \l_++\l_-=1\}.
\end{align} 
 In any case, 
 \begin{align*}
 M_\infty(z)=\frac{1}{2}(M_\E(z,\infty_{+})+M_\E(\overline{z},\infty_{+})),
 \end{align*}
 is the unique symmetric function, i.e.,  $M_\infty(z)= M_\infty(\overline{z})$ in $\cM_\E(\infty)$. 
 Define also the cone 
\begin{align*}
\cP_\E(\infty)=\bigcup_{\l> 0}\l\cM_\E(\infty).
\end{align*}
The following theorem presents a list of equivalent characterizations of $\cP_\E(\infty)$. We say that $h$ vanishes continuously at a point $x \in \E$ if $\lim_{\substack{z\to x\\ z \in \Omega}} h(z) = 0$. We call a subset of $\Omega$ bounded if it is bounded as a subset of $\bbC$.
\begin{theorem}\label{lem:ConePositive}
	Let $H_{+,b}(\Omega)$ denote the set of positive harmonic functions on $\Omega$ that are bounded on every bounded subset of $\Omega$. Then, the following are equivalent:
	\begin{enumerate}[(i)]
		\item $h\in H_{+,b}(\Omega)$ and $h$  vanishes continuously for every Dirichlet-regular point of $\E$; 
		\item $h\in H_{+,b}(\Omega)$ and $h$   vanishes continuously quasi-everywhere on $\E$;
		\item $h\in H_{+,b}(\Omega)$ and $h$   vanishes continuously $\omega_\E(\cdot,z_0)$-a.e.;
		\item $h\in \cP_\E(\infty)$.
	\end{enumerate}
\end{theorem}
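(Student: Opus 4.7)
My plan is to prove the chain (iv) $\Rightarrow$ (i) $\Rightarrow$ (ii) $\Rightarrow$ (iii) $\Rightarrow$ (iv); the first three implications are routine consequences of classical potential theory, while (iii) $\Rightarrow$ (iv) contains the main content.

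For (iv) $\Rightarrow$ (i), recall from \eqref{eq:MartinFunctionSet} that an $h\in \cP_\E(\infty)$ is a non-negative linear combination of $M_\E(\cdot,\infty_+)$ and $M_\E(\cdot,\infty_-)$, each of which is a Martin-topology limit of normalized Green's functions $G_\E(\cdot,z_0)/G_\E(z_*,z_0)$ as $z_0\to\infty$; since the Green's function vanishes continuously at every Dirichlet-regular boundary point and Harnack's principle preserves this vanishing under uniform-on-compacts limits, the same holds for $M_\E(\cdot,\infty_\pm)$ and hence for $h$. Membership in $H_{+,b}(\Omega)$ is standard for Martin functions at $\infty$. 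The implications (i) $\Rightarrow$ (ii) $\Rightarrow$ (iii) then follow immediately from Kellogg's theorem (the set of non-Dirichlet-regular points of $\E$ is polar, \cite[Ch.~4]{RansPotential}) together with the classical fact that polar subsets of $\E$ have zero harmonic measure.

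For (iii) $\Rightarrow$ (iv), I would apply the Martin representation \eqref{eq:MartinRepPosHarm} to write $h(z)=\int_{\partial_1^M\Omega} M_\E(z,y)\,d\nu(y)$, then split $\nu=\nu_\infty+\nu_{\mathrm{fin}}$ into its restrictions to $\pi^{-1}(\{\infty\})$ and its complement, with corresponding decomposition $h=h_\infty+h_{\mathrm{fin}}$ where both summands are positive and harmonic. By \eqref{eq:MartinFunctionSet}, $h_\infty\in\cP_\E(\infty)\cup\{0\}$, so the chain (iv) $\Rightarrow$ (iii) already established forces $h_\infty$ to vanish continuously $\omega_\E$-a.e.; then $h_{\mathrm{fin}}=h-h_\infty$ does so too. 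Since $0\le h_{\mathrm{fin}}\le h\in H_{+,b}(\Omega)$, also $h_{\mathrm{fin}}\in H_{+,b}(\Omega)$. It therefore suffices to show that a function in $H_{+,b}(\Omega)$ whose representing measure is concentrated on $\partial_1^M\Omega\setminus\pi^{-1}(\{\infty\})$ and which vanishes continuously $\omega_\E$-a.e.\ must be identically zero.

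For this last step I would invoke a Fatou-type theorem for Martin integrals on Denjoy domains: the minimal-fine boundary values of the Martin integral $\int M_\E(z,y)\,d\nu_{\mathrm{fin}}(y)$ at $\omega_\E$-a.e.\ $x\in\E$ equal the Radon--Nikodym derivative of $\pi_*\nu_{\mathrm{fin}}$ with respect to $\omega_\E$ (see \cite[Chapter 9]{ArmGar01} and \cite{GardSjoed09}). Continuous vanishing forces the minimal-fine limits to vanish, so $\pi_*\nu_{\mathrm{fin}}$ must be $\omega_\E$-singular. The boundedness condition $h_{\mathrm{fin}}\in H_{+,b}(\Omega)$ rules out any nonzero singular part, because a Martin integral against a nontrivial measure concentrated near a point $x\in\E$ blows up along sequences in $\Omega$ approaching $x$, contradicting boundedness on bounded neighborhoods of $x$ in $\Omega$. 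The main obstacle is this concluding argument: combining the Fatou theorem with the $H_{+,b}$ condition to eliminate all singular contributions of $\nu_{\mathrm{fin}}$, which requires careful use of the minimal-thinness structure of $\partial_1^M\Omega$ recorded above and of Denjoy-specific estimates for Martin functions near $\E$.
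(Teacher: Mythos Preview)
Your chain $(\mathrm{iv})\Rightarrow(\mathrm{i})\Rightarrow(\mathrm{ii})\Rightarrow(\mathrm{iii})$ matches the paper's; the paper simply cites \cite[Remark 5, Theorem 6]{GardSjoed09} for $(\mathrm{iv})\Rightarrow(\mathrm{i})$ rather than arguing via limits of Green functions, and then invokes Kellogg's theorem and the fact that polar sets are harmonic-null exactly as you do.

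The substantive divergence is in $(\mathrm{iii})\Rightarrow(\mathrm{iv})$, and your acknowledged ``main obstacle'' is a genuine gap. Your step~1 (Fatou--Doob for Martin integrals forces $\pi_*\nu_{\mathrm{fin}}\perp\omega_\E$) is fine. But step~2---that $h_{\mathrm{fin}}\in H_{+,b}(\Omega)$ forbids a nonzero $\omega_\E$-singular $\nu_{\mathrm{fin}}$---is not a standard fact, and your justification (``a Martin integral against a nontrivial measure concentrated near $x\in\E$ blows up along sequences approaching $x$'') is not established. Even in the half-plane, Poisson integrals of diffuse singular measures are unbounded without blowing up at any fixed point, and transferring the half-plane heuristic to a general Denjoy domain, with its non-explicit Martin kernels and possible one-or-two-point fibers, is exactly the hard part. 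You would essentially need to prove that a bounded positive harmonic function whose Martin measure sits over a bounded piece of $\E$ and which vanishes $\omega_\E$-a.e.\ is zero---and the cleanest way to do that is the argument below, which makes the Fatou detour unnecessary.

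The paper bypasses both Fatou's theorem and any analysis of singular parts. With $E_n=\pi^{-1}(\E\cap(-n,n))$, the reduction $R_h^{E_n}$ is harmonic and, because $h\in H_{+,b}(\Omega)$ is majorized by a constant on a Martin neighborhood of $E_n$, the reduction is a \emph{globally bounded} harmonic function on $\Omega$. It is dominated by $h$, hence vanishes $\omega_\E$-a.e.\ on the boundary, so the maximum principle for bounded harmonic functions \cite[Theorem 8.1]{GarHarmonicMeasure} gives $R_h^{E_n}=0$ outright. Subadditivity of reductions then yields $h=R_h^{E_n^{\mathsf c}}$ for every $n$, and letting $n\to\infty$ gives $h=R_h^{\pi^{-1}(\{\infty\})}$, whose Martin measure is supported on $\pi^{-1}(\{\infty\})$ by \cite[Lemma 8.2.11]{ArmGar01}. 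This is exactly the conclusion $\nu_{\mathrm{fin}}=0$ you were aiming for, obtained in one stroke from the $H_{+,b}$ hypothesis via the maximum principle rather than through boundary-value theory.
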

\begin{proof}
Due to \cite[Remark 5, Theorem 6]{GardSjoed09} $(iv)\implies(i)$. Kellogg's theorem \cite[Corollary 6.4]{GarHarmonicMeasure} yields $(i)\implies(ii)$ and by \cite[Theorem III.8.2]{GarHarmonicMeasure} we get that $(ii)\implies(iii)$. It remains to show that $(iii)\implies(iv)$.
Using \eqref{eq:MartinFunctionSet}, it remains to show that for any $h\in H_{+,b}(\Omega)$ that vanishes continuously $\omega_\E(\cdot,z_0)$-a.e., there are $\l_1,\l_2\geq 0$, so that 
\begin{align*}
h=\l_1M_\E(z,\infty_{+})+\l_2M_\E(z,\infty_{-}).
\end{align*}
 Let $E_n=\{\pi^{-1}(\{x\}): x\in\E,\ |x|<n\}$ and $E_n^\mathsf{c}=\partial_1^M\Omega\setminus E_n$. Let $R_h^{E_n}$ denote the reduction of $h$ (which is harmonic since $E_n\subset \partial_1^M\Omega$).  Since $h\in H_{+,b}(\Omega)$, $h$ is majorized by a constant in $U \cap \Omega$ where $U$ is a neighborhood of $E_n$ in $\hat\Omega$, so $R_h^{E_n}$ is a bounded harmonic function in $\Omega$ which vanishes $\omega_\E(\cdot,z_0)$-a.e. on the boundary. By the maximum principle \cite[Theorem 8.1]{GarHarmonicMeasure} it follows that $R_h^{E_n}=0$. Therefore,
 \begin{align*}
 h=R^{\partial_1^M\Omega}_h\leq R_h^{E_n}+R_h^{E_n^\mathsf{c}}=R_h^{E_n^\mathsf{c}}\leq h,
 \end{align*}
 where we used \cite[Lemma 8.2.2, Corollary 8.3.4]{ArmGar01}. That is, $h=R_h^{E_n^\mathsf{c}}$ for all $n$.  We want to show that $\lim_{n\to\infty}R_h^{E_n^\mathsf{c}}=R^{\pi^{-1}(\{\infty\})}_h$. Since $\pi^{-1}(\{\infty\})\subset E_n^c$, it follows by the definition of the reduction operator that $R^{\pi^{-1}(\{\infty\})}_h\leq R_h^{E_n^\mathsf{c}}$. Since for any open neighborhood, $W$, of $\pi^{-1}(\{\infty\})$ in $\hat\Omega$, there exists $n$ such that $E_n^\mathsf{c}\subset W$, we obtain from \eqref{def:reduction} that $R_h^{\pi^{-1}(\{\infty\})}=\lim_{n\to\infty}R_h^{E_n^\mathsf{c}}=h$. Thus,
\begin{align*}
h=R_h^{\pi^{-1}(\{\infty\})}.
\end{align*}
It follows from \cite[Lemma 8.2.11]{ArmGar01} that there exists a measure $\nu$ supported on $\pi^{-1}(\{\infty\})$ so that 
\begin{align*}
R_h^{\pi^{-1}(\{\infty\})}(z)=\int M(z,x)d\nu(x)=\nu_+M_\E(z,\infty_+)+\nu_-M_\E(z,\infty_-),
\end{align*}
which finishes the proof. 
\end{proof}

The symmetric Martin function $M_\infty(z)$ defines a positive harmonic function on $\bbC_+$ and therefore admits an integral representation 
\begin{equation}\label{22nov1}
M_\infty(x+iy)=a_\infty y+\int\frac{y}{(t-x)^2+y^2}d\nu_\infty(t),\quad \int\frac{d\nu_\infty(t)}{1+t^2}<\infty
\end{equation}
and 
\begin{align}\label{eq:posharmGrowth}
0\leq a_\infty=\lim\limits_{y\to\infty}\frac{M_\infty(iy)}{y}.
\end{align}
We call $\E$ an \textit{Akhiezer--Levin set}, if $\a_\infty>0$, that is, if $M_\infty$ has the maximal possible growth as $z\to i\infty$. It is shown in \cite[Section 3.1]{Lev89Par3} that $\E$ is an Akhiezer--Levin set if and only if $\cP_\E(\infty)$ is two-dimensional. In this case we define $M_\E(z)$ to be the symmetric Martin function in $\cP_\E(\infty)$ which is normalized so that \eqref{MartinFuncNormalization} holds. Since $M_\E$ vanishes q.e.\ on $\E$, by \cite[Theorem 5.2.1]{ArmGar01} it can be extended to a subharmonic function to all of $\bbC$. For a subharmonic function $u$ in $\bbC$ define 
\[
B(u,r)=\sup_{|z|=r}|u(z)|,
\]
and define the order of $u$ by 
\begin{align*}
	\sigma=\limsup_{r\to\infty}\frac{\log B(u,r)}{\log r}.
\end{align*}
If $u$ is of order $\sigma$ we call 
\begin{align*}
	\tau = \limsup_{r\to\infty}\frac{B(u,r)}{r^{\sigma}}\in[0,\infty]
\end{align*}
the type of $u$. We say $u$ is of minimal, mean or maximal type, if $\tau$ is zero, positive and finite or infinite, respectively. 
\begin{lemma}\label{lem:positveSymSubharmonic}
 If $u$ is a symmetric, subharmonic function on $\bbC$ so that $u$ is positive and harmonic on $\bbC\setminus\bbR$, then it is at most of order one. If the order is one, it is not of maximal type. Moreover, there are constants $c_1, c_2\in\bbR$, so that 
\begin{align}\label{eq:HadamardRepSub}
u(z)=\int_{(-1,1)} \log|t-x|d\nu(t)+\int_{\bbR \setminus (-1,1)}  \left(\log\left|1-\frac{z}{t}\right|+\frac{\Re z}{t}\right)d\nu(t)+c_1+c_2\Re z
\end{align}
and $\nu$ is the Riesz measure of $u$.
\end{lemma}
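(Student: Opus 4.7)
My plan is to combine the Herglotz representation on the upper half-plane with a subharmonic sub-mean value argument to obtain a global linear growth bound, and then to invoke the Hadamard--Weierstrass representation theorem for subharmonic functions of order at most one.

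Since $u$ is positive and harmonic on $\C_+$, the Herglotz representation yields $a_\infty \ge 0$ and a nonnegative Borel measure $\mu$ on $\R$ with $\int d\mu(t)/(1+t^2) < \infty$ such that
\[
u(x+iy) = a_\infty y + \int_\R \frac{y}{(t-x)^2+y^2}\,d\mu(t),\qquad y > 0,
\]
and, by the symmetry $u(\bar z)=u(z)$, the same formula with $|y|$ in place of $y$ holds for $y<0$. In particular $u\ge 0$ off $\R$, and upper semi-continuity then gives $u(x) \ge \limsup_{z \to x}u(z) \ge 0$ for $x \in \R$, so $u \ge 0$ on all of $\C$ and $B(u,r) = \sup_{|z|=r}u(z)$.

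Direct estimation of the Poisson integral, exploiting $\int d\mu(t)/(1+t^2) < \infty$, gives $u(z) \le C(1+|z|)$ for $z \in \C \setminus \R$. To extend the bound to the real axis I would use the sub-mean value inequality on the unit disk centered at $x \in \R$: the off-axis part of the circle is bounded by $C(2+|x|)$, so $u(x) \le C(2+|x|)$. Hence $B(u,r) \le C'(1+r)$, which shows $\sigma \le 1$ and, if $\sigma=1$, then $\tau \le C' < \infty$, i.e.\ $u$ is not of maximal type.

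Finally, I would apply the Hadamard--Weierstrass representation theorem for subharmonic functions of order at most one and not of maximal type (e.g.\ Levin, \emph{Lectures on entire functions}, or Hayman--Kennedy, \emph{Subharmonic functions}). Because $u$ is harmonic on $\C \setminus \R$, its Riesz measure $\nu = \frac{1}{2\pi}\Delta u$ is supported on $\R$, and the growth bound yields $\int d\nu(t)/(1+t^2)<\infty$. Using primary factors of genus one for $|t|\ge 1$, the theorem produces
\[
u(z) = \int_{(-1,1)} \log|z-t|\,d\nu(t) + \int_{\R\setminus(-1,1)} \left(\log\left|1-\tfrac{z}{t}\right|+\tfrac{\Re z}{t}\right)d\nu(t) + h(z),
\]
with $h$ an entire harmonic function of order at most one, hence of the form $h(z)=c_1+c_2\Re z+c_3\Im z$; the symmetry $u(\bar z)=u(z)$, together with the manifest symmetry under conjugation of both integrals, forces $c_3=0$. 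The main obstacle is the correct deployment of the Hadamard--Weierstrass theorem with convergence-generating factors adapted to a measure on $\R$; the growth estimate and the symmetry reduction for the residual harmonic summand are then routine.
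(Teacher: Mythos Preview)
Your overall strategy---establish a linear growth bound, then invoke the Hadamard representation for subharmonic functions and use the symmetry to kill the $\Im z$ term---matches the paper's, which simply cites Levin for the growth bound and Hayman--Kennedy for the representation. The Hadamard step and the symmetry reduction are fine. However, contrary to your closing remark, the growth estimate is \emph{not} routine, and your argument for it has a genuine gap.

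The assertion that ``direct estimation of the Poisson integral \dots\ gives $u(z)\le C(1+|z|)$ for $z\in\C\setminus\R$'' is false if one uses only the Herglotz data. A positive harmonic function on $\C_+$ whose boundary measure satisfies $\int d\mu(t)/(1+t^2)<\infty$ need not obey a uniform linear bound: take $d\mu=\sum_{n\ge 1} n\,2^{n}\,\delta_{2^{n}}$, which satisfies the integrability condition but yields $u(2^{N}+i)\ge N\,2^{N}$ while $|2^{N}+i|\sim 2^{N}$. This $u$ does not extend subharmonically across $\R$, which is exactly the point: the extension hypothesis must be used to get the growth bound, and your ``direct estimation'' does not use it. Your follow-up sub-mean value step on the \emph{unit} disk inherits the problem, since the off-axis bound you feed into it is not available (and on a unit circle the near-axis arcs are precisely where the Poisson integral can be large).

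A repair that stays close to your outline: apply the area sub-mean value inequality on a disk of radius $r$ comparable to $|z|$, insert the Herglotz formula on $\C_\pm$, and use Fubini to reduce to estimating $\int_{D\cap\C_+}\Im w\,|w-t|^{-2}\,dA(w)$. Splitting according to whether $|t|\le Cr$ or $|t|>Cr$, using that this inner integral is $O(r)$ in the first case and $O(r^{3}/t^{2})$ in the second, together with $\mu([-Cr,Cr])\le C(1+r^{2})\int d\mu/(1+t^{2})$, gives $u(z)\le C(1+|z|)$. The key is that the averaging radius must scale with $|z|$ so that the global subharmonicity genuinely enters; this is essentially what Levin's cited theorem supplies.
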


\begin{proof}
That $u$ is of order at most one and that it is not of maximal type is proved in \cite[Theorem 1.4]{Lev89}. It then follows from  \cite[Theorem 4.2]{Hayman1}, that $u$ admits the representation 
\begin{align}
u(z)=\int_{(-1,1)} \log|t-x|d\nu(t)+\int_{\bbR \setminus (-1,1)}  \left(\log\left|1-\frac{z}{t}\right|+\frac{\Re z}{t}\right)d\nu(t)+c_1+c_2\Re z+c_3\Im z.
\end{align}
Since $u$ is symmetric, we see that $c_3=0$ and we obtain \eqref{eq:HadamardRepSub}. 
\end{proof}

Thus, in particular the symmetric Martin function can be represented as
\begin{align}\label{eq:HadamardMartin}
	M_\E(z)=\int_{(-1,1)} \log|t-x|d\rho_\E(t)+\int_{\bbR \setminus (-1,1)}  \left(\log\left|1-\frac{z}{t}\right|+\frac{\Re z}{t}\right)d\rho_\E(t)+c_1+c_2\Re z.
\end{align}

We emphasize that the limit \eqref{eq:posharmGrowth} exists in $[0,\infty)$ for any positive harmonic function $h$. In view of \eqref{eq:MartinRepPosHarm} this growth should also be reflected in the growth rate of $M_\infty$, leading to the following criterion for $\E$ to be an Akhiezer--Levin set.
\begin{lemma}
\label{AL.criterion}
Assume there is a positive harmonic function in $\Omega$ with the symmetry \eqref{hsymmetry} such that 
\begin{equation}\label{12nov1}
\lim\limits_{y\to\infty}\frac{h(iy)}{y}=1.
\end{equation}
Then $\Omega$ is Greenian, $\infty$ is a Dirichlet regular point, $\E$ is an Akhiezer--Levin set, and 
\begin{equation}\label{22nov7}
M_\E(z)\leq h(z), \qquad \forall z \in \Omega.
\end{equation}
\end{lemma}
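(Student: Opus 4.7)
The plan is to derive all four conclusions from a Martin-theoretic decomposition of $h$ itself. First, if $\E$ were polar, any positive harmonic function on $\Omega$ would extend across $\E$ to a positive harmonic function on all of $\bbC$ (polar sets being removable for positive harmonic functions) and hence be constant by Liouville's theorem, contradicting $\lim_{y\to\infty}h(iy)/y=1$. Thus $\E$ is non-polar and $\Omega$ is Greenian, so Martin theory applies: fixing the normalization point $z_*$, there exists a unique finite measure $\nu_h$ on $\partial_1^M\Omega$ with
\begin{align*}
h(z)=\int_{\partial_1^M\Omega}M_\E(z,x)\,d\nu_h(x).
\end{align*}
The symmetry $h(\bar z)=h(z)$ together with uniqueness of the Martin representation forces $\nu_h$ to be invariant under the involution on $\partial_1^M\Omega$ induced by complex conjugation. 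Splitting $\nu_h$ according to $\pi^{-1}(\{\infty\})$ versus $\pi^{-1}(\E)$ produces a decomposition $h=h_\infty+h_{\mathrm{fin}}$ into two symmetric positive harmonic summands, with $h_\infty\in\cP_\E(\infty)$ by definition of the cone.

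The heart of the argument is to show that $h_{\mathrm{fin}}(iy)=o(y)$ as $y\to\infty$. Granted this, $h_\infty(iy)/y\to 1$, so $\cP_\E(\infty)$ contains a nonzero symmetric function with linear growth coefficient one; in view of \eqref{eq:MartinFunctionSet} this forces $\cP_\E(\infty)$ to be two-dimensional, which is the Akhiezer--Levin property, and further identifies $h_\infty$ with the symmetric Martin function $M_\E$ after matching the normalization \eqref{MartinFuncNormalization}. Combined with $h_{\mathrm{fin}}\geq 0$ this yields the bound $M_\E\leq h$ of \eqref{22nov7}. To prove the sublinearity of $h_{\mathrm{fin}}(iy)$, I would fix $R>0$ and split the $\nu_h$-integral over $\pi^{-1}(\E)$ into contributions from $\pi^{-1}(\E\cap[-R,R])$ and from the tail $\pi^{-1}(\E\setminus[-R,R])$. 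For the compact part, each minimal Martin function $M_\E(\cdot,x)$ vanishes continuously at Dirichlet-regular points of $\E$, and a maximum-principle argument on $\Omega\cap\{|z|>R\}$ combined with a Harnack/compactness argument on the arc $\{|z|=R\}$ bounds $M_\E(iy,x)$ by a constant depending only on $R$ uniformly in $x\in\pi^{-1}(\E\cap[-R,R])$, so this part contributes $O(1)$ to $h_{\mathrm{fin}}(iy)$. For the tail, the corresponding reduction inherits $\nu_h$-mass that can be made arbitrarily small by taking $R$ large; since $h_{\mathrm{fin}}$ is finite along $iy$, bounding the tail by this reduction gives $h_{\mathrm{fin}}(iy)/y\leq \varepsilon$ for $y$ sufficiently large, with $\varepsilon\to 0$ as $R\to\infty$. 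This uniform control over minimal Martin functions as $x$ ranges over the non-compact portion of $\partial_1^M\Omega$ is the main technical obstacle.

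Finally, Dirichlet regularity of $\infty$ follows from the Akhiezer--Levin property already established: by Lemma~\ref{lem:positveSymSubharmonic}, $M_\E$ extends across $\bbR\setminus\E$ by reflection to a subharmonic function on $\bbC$ of order at most one and not of maximal type, with Riesz measure $\rho_\E$ supported on $\E$ and admitting the Hadamard representation \eqref{eq:HadamardMartin}; the linear growth $M_\E(iy)\sim y$ then forces a lower bound on the accumulation of $\rho_\E$ near infinity, equivalent to a Wiener-type non-thinness condition for $\E$ at $\infty$, which is the standard characterization of Dirichlet regularity of $\infty$ as a boundary point of $\Omega$. This closes all four conclusions of the lemma.
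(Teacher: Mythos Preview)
Your route through the Martin decomposition $h=h_\infty+h_{\mathrm{fin}}$ is genuinely different from the paper's. The paper never decomposes $h$; it invokes a comparison principle of Borichev--Sodin (valid without Dirichlet regularity by \cite[Cor.~4.20]{BLY2}):
\[
\lim_{\substack{z\to\infty\\ \arg z\in[\delta,\pi-\delta]}}\frac{h(z)}{M_\infty(z)}=\inf_{z\in\Omega}\frac{h(z)}{M_\infty(z)}.
\]
Since the right side is finite, so is the nontangential limit; combined with $h(iy)/y\to 1$ this forces $M_\infty(iy)/y\not\to 0$, which is Akhiezer--Levin. After renormalizing $M_\infty$ to $M_\E$ the same identity gives $\inf_\Omega h/M_\E=1$, i.e.\ $M_\E\le h$. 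Dirichlet regularity of $\infty$ is then read off from Carleson--Totik \cite{CarTot04}, as noted immediately after the proof.

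Your argument has a genuine gap precisely where you flag the ``main technical obstacle''. The maximum-principle step for the compact part is circular: you apply it on the unbounded domain $\Omega\cap\{|z|>R\}$, which has $\infty$ on its boundary, so you would need a priori control of $M_\E(\cdot,x)$ near $\infty$---exactly what you are trying to prove. For the tail, small $\nu_h$-mass does not by itself bound the contribution to $h_{\mathrm{fin}}(iy)/y$; you need $M_\E(iy,x)/y$ uniformly small, and as $x$ approaches $\pi^{-1}(\{\infty\})$ the kernels $M_\E(\cdot,x)$ approach elements of $\cM_\E(\infty)$, so no such uniform decay is available. One can get a uniform \emph{bound} (Herglotz on $\bbC_+$ shows $y\mapsto M_\E(iy,x)/y$ is decreasing, then Harnack at a fixed interior point), which via dominated convergence reduces everything to proving $\lim_{y\to\infty}M_\E(iy,x)/y=0$ for each finite minimal boundary point $x$. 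That statement is plausible but your sketch does not supply it, and in fact if Akhiezer--Levin failed while an $h$ as in the hypothesis existed, then $h_\infty$ would be sublinear and $h_{\mathrm{fin}}(iy)/y\to 1$---so the sublinearity of $h_{\mathrm{fin}}$ cannot be established by an argument that nowhere uses the hypothesis on $h$. The Borichev--Sodin ratio principle is precisely the device that sidesteps this circularity.
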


\begin{proof}
Since $h$ is nonconstant, $\Omega$ is Greenian. Moreover, by a general principle,
\begin{equation}\label{12nov2}
\lim_{\substack{z \to \infty \\ \arg z \in [\delta,\pi -\delta]}} \frac{h(z)}{M_\infty(z)} = \inf_{z \in \Omega} \frac{h(z)}{M_\infty(z)}.
\end{equation}
In particular, the limit exists and is finite. This was proved by Borichev--Sodin \cite{BS01} in the Dirichlet-regular sets $\E$ but it holds without that assumption; see \cite[Cor. 4.20]{BLY2}.

If $h$ obeys \eqref{12nov1}, finiteness of the limit \eqref{12nov2} implies that $\E$ is an Akhiezer--Levin set; then $M_\infty$ can be replaced by $M_\E$ in \eqref{12nov2}, and the normalization \eqref{MartinFuncNormalization}  concludes the proof.
\end{proof}

Recall that $\infty$ being Dirichlet regular is equivalent to the fact that $G_\E(z,z_0)$ vanishes continuously at $\infty$ for some and hence all $z_0\in\Omega$. Carleson and Totik \cite{CarTot04} have shown that $\E$ being an Akhiezer--Levin set implies that $G_\E(z,z_0)$ is even Lipschitz continuous at $\infty$. 

In the Akhiezer--Levin case, upper bounds on the Martin function obtained from \eqref{22nov7} will be combined with the universal lower bound
\[
M_\E(z) \ge \lvert \Im z \rvert, \qquad \forall z \in \bbC \setminus \bbR.
\]
This bound follows from \eqref{22nov1}, because \eqref{eq:posharmGrowth} gives $a_\infty=1$. 

Lemma~\ref{AL.criterion} will be used in tandem with statements based on $\bbC_+$:

\begin{lemma}
\label{expansion.for.linear.h}
Assume that $h$ is a positive harmonic function  on $\bbC_+$ such that
\begin{equation}\label{19nov4}
h(iy) =  y + O(1/y), \qquad y \to \infty.
\end{equation}
Then there exists a constant $a \ge 0$ such that 
\begin{equation}\label{19nov5}
h(z) = \Im \Big(z - \frac az\Big) + o(|z|\inv)
\end{equation}
as $z \to \infty$, $\delta \leq \arg z \leq \pi - \delta$ for any $\delta >0$.
\end{lemma}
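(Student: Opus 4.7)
The plan is the following. First, use the Herglotz representation of $h$ as a positive harmonic function on $\bbC_+$,
\[
h(x+iy) = cy + \int_\bbR \frac{y}{(t-x)^2+y^2}\,d\mu(t),
\]
with $c \ge 0$ and a positive Borel measure $\mu$ obeying $\int(1+t^2)^{-1}d\mu(t) < \infty$. Specializing to $z = iy$ and dividing by $y$, dominated convergence applied to $\int(t^2+y^2)^{-1}d\mu(t)$ (dominated by the $\mu$-integrable function $(1+t^2)^{-1}$ for $y\ge 1$), together with the hypothesis $h(iy)/y \to 1$, forces $c = 1$. The stronger hypothesis $h(iy) - y = O(1/y)$ then says
\[
y\int_\bbR \frac{y}{t^2+y^2}\,d\mu(t) = \int_\bbR \frac{y^2}{t^2+y^2}\,d\mu(t)
\]
is bounded in $y$, and since the integrand is nonnegative and increases monotonically to $1$ as $y\to\infty$, monotone convergence forces $\mu$ to be a finite measure. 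I then set $a := \mu(\bbR) \ge 0$.

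Next, I record that for $z = x+iy$ one has $\Im(z - a/z) = y + ay/\lvert z\rvert^2$. Using the algebraic identity $\lvert z\rvert^2 - (t-x)^2 - y^2 = x^2 - (t-x)^2 = t(2x-t)$, a direct manipulation yields
\[
\lvert z\rvert\Bigl(h(z) - \Im\bigl(z-\tfrac{a}{z}\bigr)\Bigr) = \int_\bbR \frac{y}{\lvert z\rvert}\cdot\frac{t(2x-t)}{(t-x)^2+y^2}\,d\mu(t).
\]

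Finally, I would show that this last integral tends to zero as $z\to\infty$ with $\arg z \in [\delta,\pi-\delta]$ by dominated convergence against the finite measure $\mu$. The estimate $\lvert t(2x-t)\rvert = \lvert x^2 - (t-x)^2\rvert \le x^2 + (t-x)^2$ combined with $\lvert x\rvert \le \cot(\delta)\,y$ in the cone gives the uniform bound
\[
\left\lvert \frac{y}{\lvert z\rvert}\cdot\frac{t(2x-t)}{(t-x)^2+y^2}\right\rvert \le 1 + \frac{x^2}{y^2} \le \csc^2\delta,
\]
independently of $t$ and of $z$ in the cone, while for each fixed $t$ the same inequalities together with $y \ge \lvert z\rvert\sin\delta \to \infty$ force pointwise convergence to $0$. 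The only delicate point is that the dominating constant depends on $\delta$; this is exactly what distinguishes nontangential from purely radial convergence in the conclusion \eqref{19nov5}.
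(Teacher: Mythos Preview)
Your proof is correct and follows essentially the same approach as the paper: both use the Poisson/Herglotz representation, identify the linear coefficient as $1$, apply monotone convergence to deduce $\mu(\bbR)<\infty$ from the $O(1/y)$ hypothesis, and set $a=\mu(\bbR)$. Your write-up is in fact more explicit than the paper's about the final nontangential step, supplying the algebraic identity and the dominated-convergence bound in the cone where the paper simply asserts that finiteness of $\mu$ yields \eqref{19nov5}.
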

\begin{proof}
Since $h$ is positive harmonic on $\C_+$, there exists a Herglotz function $f$ such that $\Im f = h$. We denote its Herglotz representation by
\[
f(z) = \alpha z + \beta + \int \left( \frac 1{t-z} - \frac{t}{1+t^2} \right) d\mu(t).
\]
We first compute the constant $\alpha =  \lim_{y\to\infty} \Im f(iy) / y = 1$. Next, by monotone convergence,
\begin{equation}\label{19nov6}
\lim_{y\to \infty} y (\Im f(iy) - y) = \lim_{y\to \infty} \int \frac{y^2}{t^2 + y^2} \,d\mu(t) = \mu(\bbR).
\end{equation}
Thus, \eqref{19nov4} implies $\mu(\bbR) < \infty$, and then \eqref{19nov6} implies \eqref{19nov5} with $a = \mu(\bbR)$.
\end{proof}

\begin{proof}[Proof of Lemma~\ref{lemmamonotonicity}]
The function $M_{\E_1}$ is a positive harmonic function on $\bbC \setminus \E_1$, so it is a positive harmonic function on $\bbC \setminus \E_2$. It has the symmetry $M_{\E_1}(\bar z) = M_{\E_1}(z)$. Thus, applying Lemma~\ref{AL.criterion} with $h= M_{\E_1}$ in the domain $\bbC \setminus \E_2$ shows that $\E_2$ is an Akhiezer--Levin set and $M_{\E_2} \le M_{\E_1}$.

Meanwhile, \eqref{22nov1}, \eqref{eq:posharmGrowth} imply a lower bound  $\lvert \Im z \rvert \le M_{\E_2}(z)$. From the lower and upper bounds and the two-term asymptotics of $M_{\E_1}$ it follows that $M_{\E_2}(iy) = y+ O(1/y)$ as $y\to\infty$, so by Lemma~\ref{expansion.for.linear.h}, $M_{\E_2}$ has the correct two-term asymptotics.

Finally, $h_1 = M_{\E_1} - M_{\E_2}$ is a positive harmonic function on $\bbC\setminus \E_2$ and $h_1(iy) = \frac{b_{\E_1} - b_{\E_2}}y + o(1/y)$, $y \to \infty$. This implies $b_{\E_1} \ge b_{\E_2}$ and, similarly to the proof of Lemma~\ref{expansion.for.linear.h},
\[
h_1(iy) = \int \frac{y}{t^2+y^2} d\mu_1(t), \qquad \mu_1(\bbR) = \lim_{y\to \infty} y h_1(iy) = b_{\E_1} - b_{\E_2}
\]
so $b_{\E_1} = b_{\E_2}$  if and only if $M_{\E_1} = M_{\E_2}$. Theorem~\ref{lem:ConePositive} implies that $M_{\E_1} = M_{\E_2}$ if and only if $\E_2 \setminus \E_1$ is a polar set.
\end{proof}

The set $\bbR \setminus \E_1$ is open, so if it is also polar, it must be empty. Thus for $\E_2 = \bbR$ we obtain the notable special case: if $b_{\E_1} = 0$ then $\E_1 = \bbR$.

\section{Asymptotic behavior of eigensolutions} \label{sectionEigensolutions}

In this section, we study the asymptotic behavior of eigensolutions, beginning with the Weyl solution $\Psi(x, z) = \binom{ \psi_1(x, z) }{ \psi_2(x, z)}$. We denote the $n$-simplex as
\[
\Delta_n(a,b) = \{ t \in \bbR^n \mid a \le t_1 \le t_2 \le \dots \le t_n \le b \}.
\]
Importantly, in the following analysis only integrals over even-dimensional simplices will appear. We therefore denote
\[
I_n(a,b) = \int_{\Delta_{2n}(a,b)} \prod_{k=1}^n \overline{\varphi(t_{2k-1})} e^{2iz(t_{2k} - t_{2k-1})} \varphi(t_{2k}) d^{2n} t.
\]

\begin{lemma}\label{alternate.cpt.support.lemma}
If the operator data $\varphi$ vanishes on $[y_0,\infty)$, then for any $0 \leq a < b \leq y_0 $, a Weyl solution $\Psi$ obeys
\[
\log \frac{\psi_2(a,z)}{\psi_2(b,z)} + i(b-a)z = \log \frac{ 1 + \sum_{n=1}^\infty I_n(a,y_0)}{1 + \sum_{n=1}^\infty I_n(b,y_0)}.
\] 
\end{lemma}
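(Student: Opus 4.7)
The plan is to derive the Volterra-type series for $\psi_2$ by first reducing the eigenvalue equation on $[0,y_0]$ to a system with the free $z$-part factored out, and then iterating the resulting integral equations backward from $y_0$.

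First, I would use the assumption that $\varphi$ vanishes on $[y_0,\infty)$ together with the Weyl $L^2$ condition to pin down the data at $y_0$. On $[y_0,\infty)$, the eigenvalue equation decouples to $\partial_x\psi_1 = -iz\psi_1$, $\partial_x\psi_2 = iz\psi_2$; since $\Im z > 0$ on $\bbC_+$ makes $e^{-iz(x-y_0)}$ the non-$L^2$ solution, we must have $\psi_1(y_0,z)=0$, and $\psi_2(y_0,z)=:c\neq 0$ (otherwise $\Psi\equiv 0$). On $[0,y_0]$, writing out $\Lambda_\varphi\Psi = z\Psi$ componentwise gives the system
\begin{align*}
\partial_x \psi_1 &= -iz\psi_1 + i\varphi(x)\psi_2, \\
\partial_x \psi_2 &= iz\psi_2 - i\overline{\varphi(x)}\psi_1.
\end{align*}

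Next, I would introduce the gauge-transformed components $\tilde\psi_1(x)=e^{iz(x-y_0)}\psi_1(x)$ and $\tilde\psi_2(x)=e^{-iz(x-y_0)}\psi_2(x)$, which absorb the free dynamics. A direct computation gives
\begin{align*}
\partial_x\tilde\psi_1 &= i\varphi(x)\,e^{2iz(x-y_0)}\tilde\psi_2, \\
\partial_x\tilde\psi_2 &= -i\overline{\varphi(x)}\,e^{-2iz(x-y_0)}\tilde\psi_1,
\end{align*}
with boundary data $\tilde\psi_1(y_0)=0$ and $\tilde\psi_2(y_0)=c$. Integrating backward from $y_0$ yields the Volterra pair
\[
\tilde\psi_1(x) = -i\int_x^{y_0}\varphi(t)e^{2iz(t-y_0)}\tilde\psi_2(t)\,dt, \qquad \tilde\psi_2(x) = c + i\int_x^{y_0}\overline{\varphi(t)}e^{-2iz(t-y_0)}\tilde\psi_1(t)\,dt.
\]
Substituting the first into the second and collapsing the exponentials collapses $y_0$ out of the oscillation and leaves a single integral equation for $\tilde\psi_2$:
\[
\tilde\psi_2(x) = c + \int_{\Delta_2(x,y_0)}\overline{\varphi(t_1)}\varphi(t_2)\,e^{2iz(t_2-t_1)}\tilde\psi_2(t_2)\,d^2t.
\]

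Iterating this identity produces a series indexed precisely by the $2n$-simplices $\Delta_{2n}(x,y_0)$ with alternating $\overline\varphi,\varphi$ factors, giving
\[
\tilde\psi_2(x)=c\Bigl(1+\sum_{n=1}^\infty I_n(x,y_0)\Bigr).
\]
Convergence of the series is standard Volterra on $[x,y_0]$: $|e^{2iz(t_2-t_1)}|$ is bounded on the compact simplex, and $\varphi\in L^1([x,y_0])$ gives the $1/(2n)!$ factorial decay on the $n$-th term. Undoing the gauge yields $\psi_2(x,z)=c\,e^{iz(x-y_0)}\bigl(1+\sum_{n\geq 1}I_n(x,y_0)\bigr)$; forming the ratio $\psi_2(a,z)/\psi_2(b,z)$ kills the constant $c$ and contributes the factor $e^{-iz(b-a)}$, and taking logarithms produces the stated identity.

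The only subtle point is interpreting the right-hand logarithm: this needs $1+\sum_n I_n(a,y_0)\neq 0$ and $1+\sum_n I_n(b,y_0)\neq 0$. But these quantities are proportional to $\psi_2(a,z)$ and $\psi_2(b,z)$ respectively (with nonzero proportionality), and the Weyl solution cannot vanish at any point on $[0,y_0]$: if $\psi_2$ vanished at some $x_0$ then, together with $\psi_1$ at $x_0$, uniqueness for the linear ODE would allow $\Psi$ to be trivial on the rest of $[x_0,y_0]$ unless compatible with $\psi_1(x_0)$ being forced nonzero, which by the same uniqueness and the boundary condition at $y_0$ would contradict nontriviality of $\Psi$. (Alternatively, the branch of the logarithm on each side is understood via the same series expansion, matching the left-hand side by continuity in $a,b$ starting from $a=b$, where both sides vanish.) This is the only real subtlety; everything else is a straightforward Volterra iteration.
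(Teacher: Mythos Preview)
Your proof is correct and follows essentially the same approach as the paper. Both arguments set up a backward Volterra iteration from $y_0$ using the boundary data $\psi_1(y_0)=0$, $\psi_2(y_0)\neq 0$ forced by the $L^2$ condition; you carry this out componentwise via an explicit gauge transformation $\tilde\psi_j$, while the paper writes the same iteration in matrix form, but the resulting series $\psi_2(x,z)=c\,e^{iz(x-y_0)}\bigl(1+\sum_{n\ge 1}I_n(x,y_0)\bigr)$ and the final ratio-and-logarithm step are identical. Your extra discussion of nonvanishing of $1+\sum_n I_n$ is more than the paper provides; the cleanest justification is simply that $s(x,z)=\psi_1/\psi_2\in\overline{\bbD}$ for all $x$, so $\psi_2$ never vanishes.
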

\begin{proof}
For $z \in \C_+$, the function $\binom{ 0 }{ 1 } e^{i z x}$ solves $\Lambda_0 \Psi_0 = z \Psi_0$ and decays exponentially as $x \to +\infty$, so we solve backwards to arrive at a solution to \eqref{eigenequation} on $[0,\infty)$ that decays similarly:
\begin{align*}
	\Psi(x, z) &:= 
	\binom{0}{1}
	e^{i z x} - \int_{x}^{y_0} 
	\begin{pmatrix}
		e^{- i z (x - t_1)} & 0 \\
		0 & e^{i z (x - t_1)}
	\end{pmatrix}
	\begin{pmatrix}
		0 & i \varphi(t_1) \\
		- i \overline{\varphi(t_1)} & 0
	\end{pmatrix}
	\Psi(t_1, z) dt_1.
\end{align*}
Since $\varphi = 0$ on $[y_0, \infty)$, $\Psi$ represents the Weyl solution at $+\infty$, up to normalization. Volterra-type arguments yield a series expansion for $\Psi$, and left multiplication by 
$\begin{pmatrix}
	0 & 1
 \end{pmatrix}$	
gives the following series expansion for $\psi_2(x, z)$:
\begin{align*}
	\psi_2(x, z) &= e^{i z x} \left(1 + \sum_{n = 1}^{\infty} \int_{x}^{y_0} \int_{t_1}^{y_0} \cdots \int_{t_{2 n - 1}}^{y_0} \prod_{k = 1}^{n} \overline{\varphi(t_{2k - 1})} e^{2 i z (t_{2 k} - t_{2 k - 1})} \varphi(t_{2k}) d^{2n} t \right).
\end{align*}
Note that only integrals over even-dimensional simplices arise--this is an artifact of the zero entry in the free Weyl solution $\Psi_0$. We group these integrals in pairs so that later we can take advantage of the fact that $t \in \Delta_{2n}(a,b)$ implies $t_{2k} \geq t_{2k-1}$ for all $k$, which leaves a decaying exponential term in each summand for $z \in \C_+$. Evaluation of $\psi_2$ at $a$ and $b$ followed by taking the logarithm completes the proof. 
\end{proof}

\begin{lemma} \label{shrinking.weyl.disks}
If $U(x,z), V(x,z)$ solve \eqref{eigenequation} for $x \in [a,b+1]$, satisfy \eqref{weyl.disk.condition} at $b + 1$, and ${\Im z > 2\vertiii{\varphi}_1}$, then
\[
\left|\log \frac{u_2(a,z)}{u_2(b,z)} - \log \frac{v_2(a,z)}{v_2(b,z)}\right| \leq 4 \vertiii{\varphi}_1 \exp (-2\Im z + 4\vertiii{\varphi}_1) .
\]
\end{lemma}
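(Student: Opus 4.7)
The first step is to reduce the claim to a bound on an integral involving $|s_U - s_V|$. From the second row of the eigenequation \eqref{eigenequation}, $v_2' = iz v_2 - i\overline{\varphi} v_1 = (iz - i\overline{\varphi} s_V) v_2$, so $\partial_x \log v_2(x,z) = iz - i\overline{\varphi(x)} s_V(x,z)$, and analogously for $u_2$. Subtracting and integrating from $a$ to $b$ yields
\[
\log \frac{u_2(a,z)}{u_2(b,z)} - \log \frac{v_2(a,z)}{v_2(b,z)} = i\int_a^b \overline{\varphi(t)}\bigl(s_U(t,z) - s_V(t,z)\bigr)\, dt,
\]
so the modulus of the left-hand side is at most $\int_a^b |\varphi(t)|\,|s_U(t,z) - s_V(t,z)|\, dt$.

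Next, I would observe that by the Wronskian-type identity \eqref{19oct1}, the quantity $W(x)^* j W(x)$ is monotonically nonincreasing in $x$ along any eigensolution $W$. Consequently, the hypothesis \eqref{weyl.disk.condition} at $b+1$ propagates backward: it holds at every $x \in [a, b+1]$. This allows Lemma~\ref{schur.fxns.close} to be applied with endpoints $(x, b+1)$ for each $x \in [a, b]$, producing
\[
|s_U(x,z) - s_V(x,z)| \le 2 \exp\!\Bigl(-2\Im z\,(b+1-x) + 2\!\int_x^{b+1}|\varphi(t)|\, dt\Bigr).
\]

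I would then split $[a,b]$ into unit-length subintervals $(b-k, b-k+1] \cap [a,b]$ for $k = 1, 2, \ldots$ On each such piece, $b+1-x \ge k$ and $\int_x^{b+1}|\varphi| \le (k+1)\vertiii{\varphi}_1$, so the Schur-function bound simplifies to $2 e^{2\vertiii{\varphi}_1} e^{-2k\beta}$ with $\beta := \Im z - \vertiii{\varphi}_1 > \vertiii{\varphi}_1 > 0$ by hypothesis. Pairing this with $\int_{b-k}^{b-k+1}|\varphi| \le \vertiii{\varphi}_1$ and summing the resulting geometric series in $k$ produces the exponential decay of order $\vertiii{\varphi}_1 e^{-2\Im z + 4\vertiii{\varphi}_1}$.

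The main obstacle will be tightening the geometric sum to obtain the precise constant $4\vertiii{\varphi}_1$ stated in the lemma. The naive estimate yields a prefactor $1/(1-e^{-2\beta})$, which is only close to $1$ when $\beta$ is not too small, whereas $\beta$ could in principle approach $\vertiii{\varphi}_1 > 0$. To absorb this prefactor cleanly, I would use the integration-by-parts identity
\[
2|\varphi(x)| e^{2\int_x^{b+1}|\varphi|} = -\frac{d}{dx} e^{2\int_x^{b+1}|\varphi|},
\]
which produces boundary terms at $x=a$ and $x=b$ that exactly cancel with the bulk term when $\varphi \equiv 0$ (since then the left-hand side of the lemma vanishes identically). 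Carefully tracking this cancellation preserves the factor of $\vertiii{\varphi}_1$, while the assumption $\Im z > 2\vertiii{\varphi}_1$ ensures the ratio $\Im z/\beta \le 2$ and keeps the exponent $-2\Im z + 4\vertiii{\varphi}_1$ negative, yielding the stated bound.
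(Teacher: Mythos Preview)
Your first two steps are exactly the paper's: reduce to $\int_a^b |\varphi(t)|\,|s_U(t,z)-s_V(t,z)|\,dt$ via the logarithmic derivative identity, then feed in Lemma~\ref{schur.fxns.close} on each interval $[t,b+1]$. The paper also splits into unit intervals and sums.

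The only divergence is in how the constant is chased. Instead of your bound $\int_x^{b+1}|\varphi|\le (k+1)\vertiii{\varphi}_1$ on the $k$-th block, the paper uses the cruder but more convenient $\int_t^{b+1}|\varphi|\le \lceil b+1-t\rceil\vertiii{\varphi}_1\le 2(b+1-t)\vertiii{\varphi}_1$ (valid since $b+1-t\ge 1$), which collapses the exponent to $(b+1-t)(-2\Im z+4\vertiii{\varphi}_1)$. After peeling off one factor $e^{-2\Im z+4\vertiii{\varphi}_1}$, the remaining integral $\int_a^b|\varphi(t)|e^{(b-t)(-2\Im z+4\vertiii{\varphi}_1)}dt$ is handled by interpreting the unit-block sum as a lower Riemann sum for $e^{-(2\Im z-4\vertiii{\varphi}_1)s}$, giving $\le 2\vertiii{\varphi}_1$. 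This is simpler than integration by parts.

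Your proposed integration-by-parts fix is where I would push back. Writing $2|\varphi|e^{2\int_x^{b+1}|\varphi|}=-\partial_x e^{2\int_x^{b+1}|\varphi|}$ and integrating against $e^{-2\Im z(b+1-x)}$ transfers the derivative and produces a bulk term $2\Im z\int_a^b e^{-2\Im z(b+1-t)+2\int_t^{b+1}|\varphi|}\,dt$ that has no $|\varphi|$ left in the integrand; the $\vertiii{\varphi}_1$ prefactor is lost and replaced by $\Im z/(\Im z-2\vertiii{\varphi}_1)$, which does not recover the stated bound. The cancellation you allude to when $\varphi\equiv 0$ is automatic (both sides vanish) and does not help. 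So the IBP route, as sketched, does not close; just use $\lceil b+1-t\rceil\le 2(b+1-t)$ as the paper does. For the downstream applications (Corollary~\ref{epsilon.step.lemma} and beyond) only the qualitative form $C(\vertiii{\varphi}_1)\,e^{-2\Im z+4\vertiii{\varphi}_1}$ uniformly in $a,b$ is needed, so the precise value $4$ is not load-bearing.
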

\begin{proof}
We introduce the Schur functions for $U$ and $V$ by writing
\begin{equation}
	\log \frac{u_2(a,z)}{u_2(b,z)} - \log \frac{v_2(a,z)}{v_2(b,z)} = -i \int_{a}^{b} \overline{\varphi(t)} (s_U(t,z) - s_V(t,z)) dt. \label{introducing.schur.functions}
\end{equation}
Since for every $t \in [a,b]$, $U$ and $V$ solve \eqref{eigenequation} in $[t, b+1]$ and satisfy \eqref{weyl.disk.condition} at $b + 1$, using the previous lemma and further bounding using $\vertiii{\varphi}_1$, we have
\begin{align*}
	|s_U(t,z) - s_V(t,z)| &\leq 2 \exp(- 2 \Im (z) (b +1 - t) + 2 \lceil b+1-t\rceil \vertiii{\varphi}_1) \\
	&\leq 2 \exp ( (b+1-t)(-2\Im z + 4 \vertiii{\varphi}_1)),
\end{align*}
where in the second bound we use $\frac{\lceil b+1-t \rceil}{b+1-t} \leq 2$ for $t \leq b$. 
Taking the modulus in \eqref{introducing.schur.functions} now gives
\begin{align}
\left\lvert i \int_a^{b} \overline{\varphi(t)} (s_U(t, z) - s_V(t, z)) \, dt \right\rvert &\leq 2 \exp (-2\Im z + 4 \vertiii{\varphi}_1) \int_{a}^{b} |\varphi(t)| e^{(b-t)(-2\Im z + 4\vertiii{\varphi}_1)} dt. \label{using.diff.schur}
\end{align}
Then, splitting the integral $\int_{a}^{b} |\varphi(t)| e^{(b-t)(-2\Im z + 4\vertiii{\varphi}_1)} dt$ into intervals of length one, the condition $\Im z > 2\vertiii{\varphi}_1$ allows us to bound the exponential terms and bound the remaining integrals of $\varphi$ by $\vertiii{\varphi}_1$, which yields
\[
\int_{a}^{b} |\varphi(t)| e^{(b-t)(-2\Im z + 4\vertiii{\varphi}_1)} dt \leq \vertiii{\varphi}_1\Big(\sum_{k=0}^{\lfloor b-a \rfloor - 1} e^{(b-(a+k+1))(-2\Im z + 4\vertiii{\varphi}_1)} + 1\Big). 
\]
Treating the remaining sum as a lower Riemann sum for the function $e^{-(2\Im z - 4\vertiii{\varphi}_1)t}$ yields
\[
\int_{a}^{b} |\varphi(t)| e^{(b-t)(-2\Im z + 4\vertiii{\varphi}_1)} dt \leq  \vertiii{\varphi}_1 \left(\int_{0}^{b-a} e^{-(2\Im z - 4\vertiii{\varphi}_1)t}dt + 1\right) \leq 2 \vertiii{\varphi}_1,
\]
and combining this with \eqref{using.diff.schur} yields the desired bound.
\end{proof}

For Dirac operators on the line, an approximate expansion for the Weyl solution at $+\infty$, $\Psi^+(x,z),$ falls out as a consequence of the previous lemma:

\begin{corollary} \label{epsilon.step.lemma}
If $\varphi \in L^1_{\loc,\unif}(\bbR)$, then for any $a < b$, $\Im z > 2 \vertiii{\varphi}_1$, 
\begin{equation}
\left\lvert \log \frac{\psi_2^+(a,z)}{\psi_2^+(b,z)} + i(b-a)z - \log \frac{ 1 + \sum_{n=1}^\infty I_n(a,b+1)  }{1 + \sum_{n=1}^\infty   I_n(b,b+1)  } \right\rvert \le C
\end{equation}
for some $C$ depending only on $z \in \bbC_+$ and $\vertiii{\varphi}_1$.
\end{corollary}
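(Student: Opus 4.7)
The approach is to reduce to the compactly supported case handled by Lemma~\ref{alternate.cpt.support.lemma}. Define the truncated operator data
\[
\tilde\varphi(x) = \varphi(x) \chi_{[0, b+1]}(x),
\]
so $\tilde\varphi$ vanishes on $[b+1,\infty)$ and agrees with $\varphi$ on $[a,b+1]$. Since $\vertiii{\tilde\varphi}_1 \le \vertiii{\varphi}_1 < \infty$, the Dirac operator $\Lambda_{\tilde\varphi}$ is limit point at $+\infty$ and has a well-defined Weyl solution $\tilde\Psi^+(x,z)$. Apply Lemma~\ref{alternate.cpt.support.lemma} with operator data $\tilde\varphi$ and $y_0 = b+1$ to get the exact identity
\[
\log \frac{\tilde\psi_2^+(a,z)}{\tilde\psi_2^+(b,z)} + i(b-a) z = \log \frac{1 + \sum_{n=1}^\infty I_n(a, b+1)}{1 + \sum_{n=1}^\infty I_n(b, b+1)},
\]
where we used that $\tilde\varphi = \varphi$ on $[a,b+1]$, so the $I_n$'s computed with either function coincide.

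The next step is to compare $\psi_2^+$ with $\tilde\psi_2^+$ on the interval $[a,b]$ using Lemma~\ref{shrinking.weyl.disks}. Both $\Psi^+$ and $\tilde\Psi^+$ solve \eqref{eigenequation} with operator data $\varphi$ on $[a,b+1]$, since $\tilde\varphi$ and $\varphi$ agree there. I need to verify that both satisfy \eqref{weyl.disk.condition} at $b+1$. For $\Psi^+$, this follows from the general fact that the Weyl solution satisfies $\Psi^+(x,z)^* j \Psi^+(x,z) \ge 0$ for every $x \ge 0$: the ratio $\psi_1^+(0,z)/\psi_2^+(0,z) = s(z,\varphi)$ lies in the intersection of all Weyl disks $D(x,z)$, which unwinding the definition \eqref{19nov1} gives exactly $\Psi^+(b+1,z)^* j \Psi^+(b+1,z) \ge 0$. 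For $\tilde\Psi^+$, the same argument applies (with $D(x,z)$ defined relative to $\tilde\varphi$); alternatively, since $\tilde\varphi = 0$ on $[b+1,\infty)$, the truncated Weyl solution is (up to normalization) $\binom{0}{1} e^{iz(x-b-1)}$ on $[b+1,\infty)$, and a direct computation gives $\tilde\Psi^+(b+1,z)^* j \tilde\Psi^+(b+1,z) = 1 \ge 0$.

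With these hypotheses verified, Lemma~\ref{shrinking.weyl.disks} applied with $U = \Psi^+$, $V = \tilde\Psi^+$ yields
\[
\left\lvert \log \frac{\psi_2^+(a,z)}{\psi_2^+(b,z)} - \log \frac{\tilde\psi_2^+(a,z)}{\tilde\psi_2^+(b,z)} \right\rvert \le 4 \vertiii{\varphi}_1 \exp(-2 \Im z + 4 \vertiii{\varphi}_1).
\]
Combining this with the exact identity above via the triangle inequality gives the claim with
\[
C = 4 \vertiii{\varphi}_1 \exp(-2 \Im z + 4 \vertiii{\varphi}_1),
\]
which depends only on $z$ and $\vertiii{\varphi}_1$ as required. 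There is no real obstacle here; the only subtle point to articulate carefully is that both $\Psi^+$ (for $\varphi$) and $\tilde\Psi^+$ (for $\tilde\varphi$) satisfy \eqref{weyl.disk.condition} at $b+1$, so that Lemma~\ref{shrinking.weyl.disks} applies.
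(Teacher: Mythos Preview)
Your approach is essentially identical to the paper's: truncate $\varphi$ to a compactly supported $\tilde\varphi$, invoke Lemma~\ref{alternate.cpt.support.lemma} for the exact identity, and then apply Lemma~\ref{shrinking.weyl.disks} to compare the two Weyl solutions on $[a,b+1]$. One small correction: since the statement is for $\varphi \in L^1_{\loc,\unif}(\bbR)$ with arbitrary $a<b$ (possibly $a<0$), your truncation $\tilde\varphi = \varphi\chi_{[0,b+1]}$ need not agree with $\varphi$ on $[a,b+1]$; replace it by $\tilde\varphi = \varphi\chi_{[a,b+1]}$ (as the paper does), and the rest of your argument goes through unchanged.
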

\begin{proof}
We compare expansions for operator data $\varphi$ and \mbox{$\tilde{\varphi} := \varphi \chi_{[a,b + 1]}$}. Since in the proof of Lemma \ref{alternate.cpt.support.lemma} the point zero had no special significance, the same proof yields, for $a<b$ possibly negative,
\[
	\log \frac{\psi^+_2(a,z, \tilde{\varphi})}{\psi^+_2(b,z, \tilde{\varphi})} + i(b-a)z = \log \frac{ 1 + \sum_{n=1}^\infty I_n(a,b+1)}{1 + \sum_{n=1}^\infty I_n(b,b+1)}.
\]
Since $\Psi^+(x,z, \varphi)$ and $\Psi^+(x,z, \tilde{\varphi})$ each solve \eqref{ZSO} for $x \in [a,b+1]$ and satisfy $\eqref{weyl.disk.condition}$ at $b + 1$, we use the bound from Lemma \ref{shrinking.weyl.disks} and note that this bound depends only on $z \in \C_+$ and $\vertiii{\varphi}_1$.
\end{proof}

Our next goal is to control the series summands $I_n$ towards obtaining a second order approximation of solutions. Note that the control we achieve does not depend on whether we focus on the half-line or full-line case. 

\begin{lemma}\label{bound_on_series_terms}
For any $-\infty<a < b<\infty$,
\[
\lvert I_n(a,b) \rvert \le \frac{ \lceil b-a \rceil^{n} \vertiii{\varphi}_2^{2n} }{(2 \Im z)^n n! } 
 \]
\end{lemma}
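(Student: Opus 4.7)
The plan is to take moduli and apply AM--GM pairwise, decomposing the integrand into $2^n$ pieces that factor cleanly against the simplex structure. Write $\alpha = \Im z > 0$. Since $t_{2k} \ge t_{2k-1}$ on the simplex,
\[
|I_n(a,b)| \le \int_{\Delta_{2n}(a,b)} \prod_{k=1}^n |\varphi(t_{2k-1})|\,|\varphi(t_{2k})|\, e^{-2\alpha(t_{2k}-t_{2k-1})}\, d^{2n}t.
\]
Bounding $|\varphi(t_{2k-1})||\varphi(t_{2k})| \le \tfrac{1}{2}(|\varphi(t_{2k-1})|^2 + |\varphi(t_{2k})|^2)$ in each pair and expanding the product of $n$ such bounds, I would express the right-hand side as $2^{-n}\sum_{\epsilon \in \{0,1\}^n} A_\epsilon$, where $A_\epsilon$ is the integral over $\Delta_{2n}(a,b)$ of $\prod_k |\varphi(t_{s_k})|^2 e^{-2\alpha(t_{2k}-t_{2k-1})}$, with ``active'' index $s_k := 2k-1+\epsilon_k$ and ``passive'' index $r_k$ the other element of $\{2k-1,2k\}$.

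The core step is the estimate $A_\epsilon \le \frac{\lceil b-a\rceil^n \vertiii{\varphi}_2^{2n}}{(2\alpha)^n n!}$, which I would obtain by iteratively integrating out the $n$ passive variables. Each such inner integral has the form $\int_c^d e^{-2\alpha(t_{2k}-t_{2k-1})}\, dt_{r_k}$, and since the antiderivative $\pm \tfrac{1}{2\alpha} e^{-2\alpha(t_{2k}-t_{2k-1})}$ is bounded by $\tfrac{1}{2\alpha}$ at either endpoint, this integral is at most $\tfrac{1}{2\alpha}$, uniformly in the remaining variables. Moreover, eliminating $t_{r_k}$ collapses the two simplex constraints $t_{r_k-1}\le t_{r_k}\le t_{r_k+1}$ into the single constraint $t_{r_k-1}\le t_{r_k+1}$, so after all $n$ passive integrations the constraint chain reduces to $a \le t_{s_1} \le t_{s_2} \le \cdots \le t_{s_n} \le b$. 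I would then exploit the symmetry of the remaining integrand $\prod_k |\varphi(t_{s_k})|^2$ to evaluate the resulting $n$-fold integral as $\tfrac{1}{n!}\bigl(\int_a^b |\varphi|^2\bigr)^n$, and bound $\int_a^b |\varphi|^2 \le \lceil b-a\rceil \vertiii{\varphi}_2^2$ by partitioning $[a,b]$ into unit intervals.

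Since each of the $2^n$ terms gives the same bound, summing with prefactor $2^{-n}$ produces the claim exactly. The only step that requires real care is the bookkeeping for the iterated integration of passive variables: when two passive indices are adjacent (as for $\epsilon = (0,1)$ with $n=2$, where $r_1=2$ and $r_2=3$ share a boundary variable), the successive Fubini steps must be ordered correctly, but the uniform bound $\int_c^d e^{-2\alpha(\cdot)} \le \tfrac{1}{2\alpha}$ together with the transitive collapse of constraints makes any consistent ordering work.
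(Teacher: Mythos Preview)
Your proof is correct but proceeds along a genuinely different route from the paper's. The paper does not use AM--GM at all: instead it observes that the integrand, after taking moduli, is symmetric under the $2^n n!$ permutations that either swap within a pair $(t_{2k-1},t_{2k})$ or permute the $n$ pairs among themselves, and uses this to bound the simplex integral by $\frac{1}{2^n n!}$ times the integral over the full cube $[a,b]^{2n}$. The cube integral then factors into $n$ identical two-dimensional integrals, each of which is estimated by rewriting it as $\langle |\varphi|\chi_{[a,b]}, w * (|\varphi|\chi_{[a,b]})\rangle$ with $w(s)=e^{-2\Im z|s|}$ and applying Cauchy--Schwarz together with Young's convolution inequality.

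Your approach trades the symmetrization-plus-convolution argument for an AM--GM decomposition followed by direct one-variable integration of the ``passive'' coordinates; the $\frac{1}{n!}$ then reappears from the ordered integral over the surviving active variables. This is a bit more elementary, since you never need Young's inequality, and the bookkeeping you flag (adjacent passive indices) is indeed harmless because each passive integral is bounded by $\tfrac{1}{2\alpha}$ uniformly in whatever endpoints remain. The paper's argument, on the other hand, packages the combinatorics into a single symmetry step and isolates the analytic content in a clean two-dimensional estimate, which is arguably more transparent to generalize.
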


\begin{proof}
Define
\[
g(x,y) =  \overline{\varphi(\min(x,y))} e^{-2\Im z \lvert y-x \rvert }  \varphi(\max(x,y))
\]
and define $f: [a,b]^{2n} \to \bbR$ by
\[
f(t) = \prod_{k=1}^n g(t_{2k-1}, t_{2k}).
\]
For any permutation $\pi \in S_n$, define the linear operator $M_\pi$ on $\bbR^{2n}$ by
\[
(M_\pi t)_{2k-1} = t_{2\pi(k) -1}, \quad (M_\pi t)_{2k} = t_{2\pi(k)}.
\]
In words, $M_\pi$ permutes the $n$ pairs $(t_1,t_2),\dots, (t_{2n-1}, t_{2n})$ according to $\pi$. Moreover, for any  sequence of permutations $\epsilon \in S_2^n$, define the linear operator $N_\epsilon$ on $\bbR^{2n}$ by
\[
(N_\epsilon t)_{2k-1} = t_{2k-2+\epsilon(1)}, \quad (N_\epsilon t)_{2k} = t_{2k-2+\epsilon(2)}.
\]
In words, $N_\epsilon$ permutes each pair $(t_{2k-1}, t_{2k})$ according to $\epsilon_k \in S_2$.

For different $\pi, \epsilon$, the products $N_\epsilon M_\pi$ are distinct permutation matrices, so the images of interiors of $\Delta_{2n}(a,b)$ are disjoint subsets of $(a,b)^{2n}$.  Since $g$ is symmetric and $f$ depends in the same way on each pair $(t_{2k-1}, t_{2k})$, for any $\pi, \epsilon$, $f(N_\epsilon M_\pi t) = f(t)$. Thus,
\[
 \int_{\Delta_{2n}(a,b)} \lvert f(t) \rvert d^{2n} t \le \frac 1{2^n n!} \int_{[a,b]^{2n}} \lvert f(t) \rvert d^{2n} t.
\]
Since $f$ is a product of functions which depend only on a single pair $(t_{2k-1}, t_{2k})$, the new integral separates by Tonelli's theorem into a product of $n$ equal integrals, so it remains to prove that
\[
\int_{[a,b]^2}  \lvert \varphi(s) \rvert e^{-2\Im z \lvert t - s \rvert } \lvert \varphi(t) \rvert ds dt \le \frac{\lceil b-a\rceil \vertiii{\varphi}_2^2}{\Im z}.
\]
To prove this, we rewrite the double integral using convolution with the function $w(s) = e^{-2\Im z \lvert s \rvert}$:
\[
\int_{[a,b]^2}  \lvert \varphi(s) \rvert e^{-2\Im z \lvert t - s \rvert } \lvert \varphi(t) \rvert ds dt = \int_{[a,b]} \lvert \varphi(s) \rvert   (w * (\chi_{[a,b]} |\varphi|))(s) ds.
\]
By the Cauchy--Schwarz inequality, this is bounded above by $\|\varphi\|_{L^2(a,b)} \|w * (\chi_{[a,b]} |\varphi|)\|_{L^2(a,b)}$. Young's convolution inequality together with $\|w\|_{L^1(\R)} = \frac{1}{\Im z}$ gives the bound
\[
	\int_{[a,b]^2}  \lvert \varphi(s) \rvert e^{-2\Im z \lvert t - s \rvert } \lvert \varphi(t) \rvert ds dt \le \frac{\lVert \chi_{[a,b]} \varphi \rVert_2^2 }{\Im z} \le \frac{\vertiii{\varphi}^2_2 \lceil b-a \rceil}{\Im z},
\]
which concludes the proof. 
\end{proof}

\begin{lemma}\label{short.hop.lemma}
If $b-a \le 2$ and $\Im z \ge 4 \vertiii{\varphi}_2^2$, then
\[
\left\lvert \log \left( 1 + \sum_{n=1}^\infty I_n(a,b) \right) - I_1(a,b) \right\rvert \le \frac{10 \vertiii{\varphi}_2^4}{(\Im z)^2}.
\]
\end{lemma}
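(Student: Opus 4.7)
The plan is to combine the bound from Lemma~\ref{bound_on_series_terms} with the elementary Taylor estimate $|\log(1+w)-w|\le |w|^2$ for $|w|\le 1/2$. First I would specialize Lemma~\ref{bound_on_series_terms}: with $b-a\le 2$ we have $\lceil b-a\rceil\le 2$, so setting $\eta := \vertiii{\varphi}_2^2/\Im z$ we obtain
\[
|I_n(a,b)| \le \frac{2^n \vertiii{\varphi}_2^{2n}}{(2\Im z)^n n!} = \frac{\eta^n}{n!}.
\]
The hypothesis $\Im z\ge 4\vertiii{\varphi}_2^2$ gives $\eta \le 1/4$, which will make everything a convergent geometric-type tail.

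Next, denote $S = \sum_{n=1}^\infty I_n(a,b)$ and $R = S - I_1(a,b) = \sum_{n\ge 2} I_n(a,b)$. Then
\[
|S|\le \sum_{n\ge 1}\frac{\eta^n}{n!}=e^\eta - 1 \le e^{1/4}-1 < \tfrac12,
\]
and
\[
|R|\le \sum_{n\ge 2}\frac{\eta^n}{n!} \le \frac{\eta^2}{2}e^\eta \le \frac{e^{1/4}}{2}\eta^2.
\]
Since $|S|\le 1/2$, the Taylor expansion of $\log(1+w)$ yields
\[
|\log(1+S)-S| \le |S|^2 \le (e^\eta-1)^2 \le \eta^2 e^{2\eta} \le e^{1/2}\,\eta^2.
\]

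Finally I would combine these via the triangle inequality:
\[
\bigl|\log(1+S)-I_1(a,b)\bigr|
\le |\log(1+S)-S| + |R|
\le \Bigl(e^{1/2}+\tfrac{e^{1/4}}{2}\Bigr)\eta^2.
\]
Substituting $\eta^2 = \vertiii{\varphi}_2^4/(\Im z)^2$ and observing $e^{1/2}+\tfrac12 e^{1/4}<10$ finishes the proof. There is no real obstacle here; the only thing to verify carefully is that $\eta\le 1/4$ is small enough to put $S$ inside the region where the quadratic Taylor remainder bound for $\log$ applies, and that the numerical constant $10$ is comfortably above the sharp constant produced by the estimate.
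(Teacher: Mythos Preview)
Your proof is correct and follows essentially the same approach as the paper: specialize Lemma~\ref{bound_on_series_terms} with $\lceil b-a\rceil\le 2$, bound the full tail $S$ by $e^\eta-1\le 1/2$, control $\log(1+S)-S$ by the quadratic Taylor remainder, and handle the remainder $R=\sum_{n\ge 2}I_n$ separately. Your numerical bookkeeping is slightly sharper (you use $|\log(1+w)-w|\le |w|^2$ rather than $2|w|^2$, and bound $|S|^2$ directly by $\eta^2 e^{2\eta}$), yielding a constant around $2.3$ instead of $10$, but the argument is the same.
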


\begin{proof}
Since $ b-a  \le 2$,  Lemma \ref{bound_on_series_terms} implies
\[
\left|\sum_{n=1}^\infty I_n(a,b) \right| \le \sum_{n=1}^\infty \frac{ \vertiii{\varphi}_2^{2n} }{( \Im z)^n n! } = e^{ \vertiii{\varphi}_2^2 / \Im z} - 1.
\]
To bound this further, note that for all $0 < t \le 1/4$ we have $e^t - 1 \le t e^t \le 2t \le 1/2$
(the first inequality follows from convexity of the exponential, the second from $e^{1/4} \le 2$, and the third again from $t \le 1/4$). Now the assumption $\Im z \ge 4 \vertiii{\varphi}_2^2$ implies
\[
\left|\sum_{n=1}^\infty I_n(a,b) \right| \le \frac{2 \vertiii{\varphi}_2^2}{\Im z}.
\]
Use the fact that $\lvert s \rvert \le \frac 12$ implies 
\[
\lvert \log(1+s) - s \rvert \le \sum_{k=2}^\infty \frac {\lvert s\rvert^k}k  \le \frac 12 \sum_{k=2}^\infty \lvert s\rvert^k = \frac{\lvert s \rvert^2}{2( 1-\lvert s \rvert)} \le 2 \lvert s \rvert^2
\]
to bound
\[
\left\lvert \log \left( 1 + \sum_{n=1}^\infty I_n(a,b) \right) - \sum_{n=1}^\infty I_n(a,b) \right\rvert \le \frac{8 \vertiii{\varphi}_2^4}{(\Im z)^2}.
\]
Likewise,
\[
\left\lvert \sum_{n=2}^\infty I_n(a,b) \right\rvert  \le \sum_{n=2}^\infty \frac{ \vertiii{\varphi}_2^{2n} }{( \Im z)^n n! } \le   \frac{\vertiii{\varphi}_2^4}{(\Im z)^2}e^{ \vertiii{\varphi}_2^2 / \Im z} \le \frac{2\vertiii{\varphi}_2^4}{(\Im z)^2}
\]
where the last steps use $e^t - 1 - t \le t^2 e^t$.
\end{proof}

From the previous results, it follows that

\begin{corollary}\label{22aug1_2}
If $\varphi \in L^2_{\loc,\unif}([0,\infty))$, then for any $\delta > 0$, uniformly in $a \ge 0$, 
\begin{equation}
\log \frac{\psi_2(a,z)}{\psi_2(a+1,z)} + iz = I_1(a,a+2) - I_1(a+1,a+2) + O\left( \frac 1{ (\Im z)^2} \right)
\end{equation}
as $z  \to\infty$ with $\delta \le \arg z \le \pi - \delta$.
\end{corollary}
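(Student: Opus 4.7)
The plan is to chain three preceding results together, with the only subtlety being a careful tracking of the implicit constants.

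First, apply Corollary~\ref{epsilon.step.lemma} (epsilon.step.lemma) with $b=a+1$:
\[
\log\frac{\psi_2(a,z)}{\psi_2(a+1,z)} + iz = \log\frac{1+\sum_{n\ge 1} I_n(a,a+2)}{1+\sum_{n\ge 1}I_n(a+1,a+2)} + R_1(a,z),
\]
where $R_1(a,z)$ is the error coming from truncating $\varphi$ to $[a,a+2]$. Although Corollary~\ref{epsilon.step.lemma} states only $|R_1(a,z)| \le C$, its proof rests on Lemma~\ref{shrinking.weyl.disks}, which supplies the explicit bound $4\vertiii{\varphi}_1\exp(-2\Im z + 4\vertiii{\varphi}_1)$. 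In the sector $\arg z \in [\delta,\pi-\delta]$ we have $\Im z \ge |z|\sin\delta$, so this error is exponentially small in $|z|$ and certainly $O(1/(\Im z)^2)$.

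Next, apply Lemma~\ref{short.hop.lemma} (short.hop.lemma) separately to the intervals $[a,a+2]$ and $[a+1,a+2]$. Both have length $\le 2$, and for $|z|$ large enough the hypothesis $\Im z \ge 4\vertiii{\varphi}_2^2$ holds. This yields
\[
\log\!\Big(1+\sum_{n\ge 1} I_n(a,a+2)\Big) = I_1(a,a+2) + O\!\left(\tfrac{1}{(\Im z)^2}\right),
\]
and the analogous expansion on $[a+1,a+2]$. Subtracting and combining with the first step gives the claimed identity.

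Uniformity in $a \ge 0$ is automatic because every bound invoked—from Lemma~\ref{shrinking.weyl.disks}, Lemma~\ref{bound_on_series_terms}, and Lemma~\ref{short.hop.lemma}—is controlled purely by the uniform local norms $\vertiii{\varphi}_1$ and $\vertiii{\varphi}_2$, which do not depend on $a$. The only non-cosmetic step in the argument is recognizing that the constant $C$ in Corollary~\ref{epsilon.step.lemma} is genuinely exponentially small in $\Im z$ rather than merely bounded; once that is noted, all three error contributions coalesce into the stated $O(1/(\Im z)^2)$ remainder, and no further estimation is required.
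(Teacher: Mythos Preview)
Your proposal is correct and follows essentially the same approach as the paper: apply Corollary~\ref{epsilon.step.lemma} with $b=a+1$, observe that its error is actually $O(e^{-2\Im z})$ (the paper states this directly; you justify it by tracing back to the bound in Lemma~\ref{shrinking.weyl.disks}), then apply Lemma~\ref{short.hop.lemma} to the two intervals and subtract. Your explicit remarks on uniformity in $a$ and on the sector condition ensuring $\Im z \ge 4\vertiii{\varphi}_2^2$ are details the paper leaves implicit, but the logical structure is identical.
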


\begin{proof}
Taking $b=a+1$ in Lemma \ref{epsilon.step.lemma} gives the approximation 
\[
	\log \frac{\psi_2(a,z)}{\psi_2(a+1,z)} + iz = \log \frac{ 1 + \sum_{n=1}^\infty I_n(a,a+2)  }{1 + \sum_{n=1}^\infty   I_n(a+1,a+2)  } + O( e^{- 2 \Im z}).
\]
Applying Lemma \ref{short.hop.lemma} gives the claim. 
\end{proof}

We are now ready to give a two term estimate of $\psi_2$ using convolution against the function
\begin{equation} \label{wzdefinition}
w_z(t) = \begin{cases} 2iz e^{-2izt} & t \in [-1,0] \\
0 & \text{else}
\end{cases}
\end{equation}
Note that $\lVert w_z \rVert_{1} \le 1$ for $z = iy$. More generally, $\lVert w_z \rVert_1 \le \lvert z \rvert / \Im z$, so this $L^1$ norm is uniformly bounded in any nontangential cone $\delta \le \arg z \le \pi - \delta$.

\begin{corollary} \label{13jun3}
If $\varphi \in L^2_{\loc,\unif}([0,\infty))$, then for any $\delta > 0$,
\begin{equation} 
\limsup_{x\to \infty} \left\lvert \frac 1x \log \frac{\psi_2(0,z)}{\psi_2(x,z)} + iz  - \frac 1x \frac 1{2iz} \int_0^x \overline{\varphi(t)} (w_z * \varphi)(t) \,dt \right\rvert = O(\lvert z\rvert^{-2}) 
\end{equation}
as $z  \to\infty$ with $\delta \le \arg z \le \pi - \delta$.
\end{corollary}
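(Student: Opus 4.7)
The plan is to iterate Corollary~\ref{22aug1_2} and sum telescopically to express $\frac{1}{x}\log\frac{\psi_2(0,z)}{\psi_2(x,z)} + iz$ as a Riemann-type sum of the $I_1$-differences, then identify those differences (up to exponentially small tails) with the convolution integral against $w_z$.

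First, I will set $N = \lfloor x \rfloor$ and apply Corollary~\ref{22aug1_2} for $a = 0, 1, \ldots, N-1$, using the crucial fact that its error $O((\Im z)^{-2})$ is uniform in $a$. Summing the resulting identities telescopes on the left to give
\[
\log\frac{\psi_2(0,z)}{\psi_2(N,z)} + N i z = \sum_{a=0}^{N-1}\bigl[I_1(a,a+2) - I_1(a+1,a+2)\bigr] + O\!\left(\tfrac{N}{(\Im z)^2}\right).
\]

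Next, I will rewrite each summand: since $\Delta_2(a,a+2)\setminus\Delta_2(a+1,a+2) = \{a\le t_1 < a+1,\ t_1 \le t_2 \le a+2\}$, one has
\[
I_1(a,a+2) - I_1(a+1,a+2) = \int_a^{a+1}\!\!\int_{t_1}^{a+2} \overline{\varphi(t_1)}\, e^{2iz(t_2-t_1)} \varphi(t_2)\, dt_2\, dt_1.
\]
Split the inner integral at $t_2 = t_1+1$. Unwinding the definition of $w_z$ in \eqref{wzdefinition} shows
\[
\frac{1}{2iz}\,\overline{\varphi(t)}\,(w_z*\varphi)(t) = \overline{\varphi(t)} \int_t^{t+1} e^{2iz(s-t)}\varphi(s)\,ds,
\]
so the piece $t_2 \in [t_1,t_1+1]$ exactly produces $\frac{1}{2iz}\int_a^{a+1}\overline{\varphi(t)}(w_z*\varphi)(t)\,dt$. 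The remainder piece $t_2 \in [t_1+1, a+2]$ obeys $t_2-t_1 \ge 1$, hence carries the factor $|e^{2iz(t_2-t_1)}| \le e^{-2\Im z}$, and is therefore bounded by $e^{-2\Im z}\vertiii{\varphi}_2^2$ by Cauchy--Schwarz on unit intervals.

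Summing over $a$ and combining with the telescoping identity yields
\[
\log\frac{\psi_2(0,z)}{\psi_2(N,z)} + Niz = \frac{1}{2iz}\int_0^N \overline{\varphi(t)}(w_z*\varphi)(t)\,dt + O\!\left(\tfrac{N}{(\Im z)^2}\right) + O(Ne^{-2\Im z}).
\]
Finally, I will absorb the gap between $N$ and $x$: Corollary~\ref{22aug1_2} (applied once more) gives $|\log\psi_2(N,z)/\psi_2(x,z) + (x-N)iz| = O(\vertiii{\varphi}_2^2/\Im z)$, and the missing tail of the convolution integral is bounded by $\vertiii{\varphi}_2^2 \|w_z\|_1/|z| = O(1/\Im z)$; both contribute $O(1/x)$ after dividing by $x$ and vanish in the limsup. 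Dividing by $x$ and using $N/x \to 1$, the main error becomes $O((\Im z)^{-2})$; in the nontangential cone $\delta\le \arg z \le \pi-\delta$ we have $\Im z \asymp |z|$, so this error is $O(|z|^{-2})$ as claimed. The main technical point to get right is the uniformity of all error constants in $a$ (already guaranteed by Corollary~\ref{22aug1_2} and the uniform $L^2$-bound on $\varphi$) so that the per-step errors sum cleanly to $O(x/(\Im z)^2)$ before being divided by $x$.
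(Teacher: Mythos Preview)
Your proof is correct and follows essentially the same approach as the paper: telescope via Corollary~\ref{22aug1_2} over unit intervals, identify each difference $I_1(a,a+2)-I_1(a+1,a+2)$ as an integral over the trapezoid $\{a\le t_1\le a+1,\ t_1\le t_2\le a+2\}$, and split at $t_2=t_1+1$ into the convolution piece and an $O(e^{-2\Im z})$ tail. The only cosmetic differences are that the paper indexes from the top (intervals $[x-k-1,x-k]$, leaving the fractional piece at $[0,\{x\}]$ where continuity of $\psi_2$ on $[0,1]$ kills it directly) while you index from the bottom and leave the fractional piece at $[N,x]$; your handling of that piece is fine, though strictly speaking it follows from the ingredients of Corollary~\ref{22aug1_2} (Corollary~\ref{epsilon.step.lemma} and Lemma~\ref{short.hop.lemma}) rather than from Corollary~\ref{22aug1_2} itself, which is stated only for unit steps.
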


\begin{proof}
We begin by breaking up the interval $[0,x]$ into intervals of length $1$ so that we can apply Corollary \ref{22aug1_2}. We write,
\[
	\frac 1x \log \frac{\psi_2(0,z)}{\psi_2(x,z)} + i z = \frac 1x \sum_{k=0}^{\lfloor x \rfloor -1} \left(\log \frac{\psi_2(x-k-1, z)}{\psi_2(x-k,z)} + iz\right) + \frac{\{x\}}{x} iz + \frac 1x \log \frac{\psi_2(0,z)}{\psi_2(\{x\}, z)},
\]
where $\lfloor s \rfloor$ denotes the greatest integer less than or equal to $s$, and $\{s\}$ denotes the fractional part of $s$. Because $\psi_2$ is nonzero and continuous, when we take the limit as $x \to \infty$ the latter two terms vanish since $\{x\}<1$. In the sum, using Corollary \ref{22aug1_2} we can rewrite these $\lfloor x \rfloor$ summands as
\begin{align*}
	\log \frac{\psi_2(x-k-1, z)}{\psi_2(x-k,z)} + iz = I_1(x-k-1, x-k+1) - I_1(x-k, x-k+1) + O\left(\frac{1}{(\Im z)^2}\right),
\end{align*}
which, recalling the definition of $I_n(a,b)$, gives the difference of integrals
\[
	\int_{x-k-1}^{x-k+1} \int_{s}^{x-k+1} f(s,t) dtds - \int_{x-k}^{x-k+1} \int_{s}^{x-k+1} f(s,t) dtds = \int_{x-k-1}^{x-k} \int_{s}^{x-k+1} f(s,t) dtds,
\]
where $f(s,t) = \overline{\varphi(s)} e^{2iz(t - s)} \varphi(t)$. The resulting integral is taken over a trapezoidal region such that, as $k$ increases by one, the trapezoid moves down by one and to the left by one in the plane. Taking the sum over $k$ and dividing by $x$ gives 
\[
\frac 1x \sum_{k=0}^{\lfloor x \rfloor -1} \log \frac{\psi_2(x-k-1, z)}{\psi_2(x-k,z)} =\frac 1x \sum_{k=0}^{\lfloor x \rfloor -1} \int_{x-k-1}^{x-k} \int_{s+1}^{x-k+1} f(s,t) dtds + \frac 1x \int_{\{x\}}^{x} \int_{s}^{s+1} f(s,t) dtds + O\left(\frac{1}{(\Im z)^2}\right).
\]
Note that in the remaining sum, the bounds of integration have the variable $t \geq s+1$. In this region $|f(s,t)| \leq e^{-2 \Im z} |\varphi(s)\varphi(t)|$, so that these contributions can be ignored at only exponentially decaying cost, and we write
\begin{align*}
\frac 1x \sum_{k=0}^{\lfloor x \rfloor -1} \log \frac{\psi_2(x-k-1, z)}{\psi_2(x-k,z)} &= \frac 1x  \int_{\{x\}}^{x} \int_{s}^{s+1} f(s,t) dtds + O\left(\frac{1}{(\Im z)^2}\right) \\
	&= \frac 1x \frac{1}{2iz} \int_{\{x\}}^{x} \overline{\varphi(s)} (w_z * \varphi)(s) ds +  O\left(\frac{1}{(\Im z)^2}\right).
\end{align*}
Taking the difference from the statement of the corollary and then the prescribed limit gives the claim.  
\end{proof}

We can now express the growth rate of the Dirichlet solution in the following way:
\begin{theorem}
\label{dirichlet.exp.growth}
If $\varphi \in L^2_{\loc,\unif}([0,\infty))$, then for any $\delta > 0$,
\begin{equation} 
\limsup_{x\to \infty} \left\lvert \frac 1x \log u_1(x,z) + iz  - \frac 1x \frac 1{2iz} \int_0^x \overline{\varphi(t)} (w_z * \varphi)(t) \,dt \right\rvert = O(\lvert z\rvert^{-2}) 
\end{equation}
as $z  \to\infty$ with $\delta \le \arg z \le \pi - \delta$.
\end{theorem}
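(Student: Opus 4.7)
The plan is to reduce the Dirichlet statement directly to the Weyl-side statement in Corollary~\ref{13jun3} by exploiting the fact that $u_1$ and $1/\psi_2$ have, up to bounded logarithmic corrections, the same asymptotic behavior. The bridge is Lemma~\ref{u.times.psi.inequality}, which gives $C^{-1}\le |u_1(x,z)\psi_2(x,z)|\le C$ uniformly in $x\ge 1$ for each fixed $z\in\bbC_+$.

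First, I would promote this two-sided bound on $|u_1\psi_2|$ to a bound on a continuous branch of $\log(u_1\psi_2)$ itself. Using that the Wronskian $W(U,\Psi)=i(u_1\psi_2-u_2\psi_1)$ is a nonzero $x$-independent constant $c_0$, one can write
\[
u_1(x,z)\psi_2(x,z)=\frac{c_0/i}{1-s_U^-(x,z)\,s(x,z)},
\]
where $s_U^-=u_2/u_1$ and $s=\psi_1/\psi_2$. By Lemma~\ref{disks.as.fn.of.phi} (applied via the reflection symmetry of Lemma~\ref{reflection_symmetry} and the translates $\varphi_t$), both $s_U^-(x,z)$ and $s(x,z)$ lie in a disc of radius $1-\delta$ for all $x\ge 1$, so $1-s_U^-(x,z)\,s(x,z)$ stays in the disc of radius $(1-\delta)^2<1$ centered at $1$. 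This region is simply connected and avoids $0$, hence a continuous branch of $\log(1-s_U^-s)$, and therefore of $\log(u_1\psi_2)$, is uniformly bounded in $x\ge 1$.

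Choosing the continuous branches of $\log u_1(x,z)$ and $\log\psi_2(x,z)$ (well-defined since $|u_1|, |\psi_2|>0$ by the Schur function bounds noted above), the previous step yields $\log u_1(x,z)+\log\psi_2(x,z)=O(1)$ as $x\to\infty$. Dividing by $x$ gives
\[
\frac{1}{x}\log u_1(x,z)=\frac{1}{x}\log\frac{\psi_2(0,z)}{\psi_2(x,z)}+O\!\left(\frac{1}{x}\right),
\]
because the extra constant $\frac{1}{x}\log\psi_2(0,z)$ vanishes. Combining with Corollary~\ref{13jun3} and taking $\limsup_{x\to\infty}$ (so the $O(1/x)$ term disappears) produces exactly the claimed $O(|z|^{-2})$ bound, uniformly for $z$ in any nontangential cone $\delta\le\arg z\le \pi-\delta$.

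The main obstacle I expect is the branch-of-logarithm bookkeeping in the first step---\emph{a priori} one needs to worry that the arguments of $u_1$ and $\psi_2$ wind as $x\to\infty$, which would turn a pointwise modulus bound into an unbounded complex log. The Wronskian identity neutralizes this worry cleanly by realizing $u_1\psi_2$ as the reciprocal of a function taking values in a fixed disc around $1$ of radius strictly less than $1$. Everything else is direct substitution into Corollary~\ref{13jun3}.
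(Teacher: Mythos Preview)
Your proof is correct and follows the same approach as the paper's one-line proof (combine Corollary~\ref{13jun3} and Lemma~\ref{u.times.psi.inequality}). Your extra care with the branch of $\log(u_1\psi_2)$ via the Wronskian identity $u_1\psi_2=c/(1-s_U^-\,s)$ is a welcome addition, since Lemma~\ref{u.times.psi.inequality} alone only bounds $|u_1\psi_2|$ and says nothing about its argument; the paper leaves this step implicit.
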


\begin{proof}
This follows combining Corollary \ref{13jun3} and Lemma \ref{u.times.psi.inequality}.
\end{proof}

This estimate is fundamental to the rest of the paper: in particular the uniformity that follows from the $\limsup$ will allow us to derive two-term expansions of subsequential limits of $\frac 1x \log u_1(x,z)$. We emphasize again that the more complicated second term is needed here: although for any fixed positive $x$,
\[
\frac 1x \log u_1(x,z) + iz  - \frac 1x \frac 1{2iz} \int_0^x \lvert \varphi(t)\rvert^2 \,dt = o(\lvert z \rvert^{-1}),
\]
that statement would not hold in general with a $\limsup_{x\to\infty}$ on the left-hand side.

\section{Subsequential limits} \label{sectionLimits1}

In this section, we begin to investigate the root asymptotics of eigensolutions as $x \to +\infty$. We define
\[
h(x,z) = \frac 1x \log \lvert u_1(x,z) - u_2(x,z) \rvert
\]
and recall that $h$ obeys the symmetry \eqref{hsymmetry}, that is, $h(x,\bar{z}) = h(x,z)$.

\begin{lemma}
\label{pointwise.h.lower.bound}
For any $z\in \bbC_+$,
\[
\liminf_{x\to\infty} h(x,z) \ge 0.
\]
\end{lemma}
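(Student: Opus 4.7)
The plan is to reduce the claim to an upper bound on the growth rate of the Weyl solution $\Psi$. By Lemma~\ref{u.times.psi.inequality} and Lemma~\ref{pass.to.new.h}, there is a constant $C \ge 1$ (depending on $z$) such that for every $x \ge 1$,
\[
|u_1(x,z) - u_2(x,z)| \ge C^{-1} |u_1(x,z)| \ge C^{-2} |\psi_2(x,z)|^{-1},
\]
so that
\[
h(x,z) \ge -\frac{1}{x}\log|\psi_2(x,z)| - \frac{2\log C}{x}.
\]
It therefore suffices to show $\limsup_{x\to\infty} \frac{1}{x} \log|\psi_2(x,z)| \le 0$; in fact I will show the stronger statement that $|\psi_2(x,z)| \to 0$ as $x\to\infty$, which makes $\log|\psi_2(x,z)| \le 0$ for all sufficiently large $x$.

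Since $z \in \bbC_+$ and $\vertiii{\varphi}_1 < \infty$ (by Cauchy--Schwarz applied to \eqref{varphiL2locunif}), the operator is in the limit point case, and the Weyl solution $\Psi(\cdot,z)$ belongs to $L^2([0,\infty),\bbC^2)$. From the eigenequation \eqref{eigenequation},
\[
\|\partial_x \Psi(x,z)\| \le \bigl(|z| + |\varphi(x)|\bigr) \|\Psi(x,z)\|,
\]
so a Gronwall argument on each interval $[a,a+1]$, using $\int_a^{a+1}|\varphi(t)|\,dt \le \vertiii{\varphi}_2$, produces a constant $C_z$ depending only on $|z|$ and $\vertiii{\varphi}_2$ with $\|\Psi(a+1,z)\| \le C_z \|\Psi(t,z)\|$ for all $t \in [a,a+1]$. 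Squaring and integrating in $t$ over $[a,a+1]$ gives
\[
\|\Psi(a+1,z)\|^2 \le C_z^2 \int_a^{a+1} \|\Psi(t,z)\|^2 \, dt,
\]
and the right-hand side tends to $0$ as $a \to \infty$ by $\Psi \in L^2$. Hence $\|\Psi(x,z)\| \to 0$, and in particular $|\psi_2(x,z)| \to 0$, which combined with the reduction above yields the claim.

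The only subtle point is the reduction step: it leverages both the linear independence of $U$ and $\Psi$ (entering through the nonvanishing Wronskian in Lemma~\ref{u.times.psi.inequality}) and the uniform Schur-disk bound $|u_2/u_1| \le 1-\delta$ that underlies Lemma~\ref{pass.to.new.h}. Given these inputs already established in Section~\ref{sectionDiracBackground}, I do not anticipate any further obstacle; the remainder is a routine conversion of $L^2$-integrability of $\Psi$ into pointwise decay via the ODE it satisfies.
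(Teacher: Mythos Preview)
Your proof is correct but takes a genuinely different route from the paper's. The paper does not invoke the Weyl solution at all: after the same reduction via Lemma~\ref{pass.to.new.h} to showing $\liminf_{x\to\infty}\frac{1}{x}\log|u_1(x,z)|\ge 0$, it computes directly
\[
\partial_x\bigl(U(x,z)^* j U(x,z)\bigr) = -2\Im z\,U(x,z)^*U(x,z),
\]
integrates, and uses the Dirichlet initial condition $U(0,z)^*jU(0,z)=0$ to obtain
\[
|u_1(x,z)|^2 = |u_2(x,z)|^2 + 2\Im z\int_0^x \|U(s,z)\|^2\,ds.
\]
Since the integral is strictly increasing and positive for $x>0$, this gives a uniform positive lower bound on $|u_1(x,z)|$ for all $x\ge x_0$, and the claim follows. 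Your argument instead routes through Lemma~\ref{u.times.psi.inequality} to convert the lower bound on $|u_1|$ into an upper bound on $|\psi_2|$, and then uses $\Psi\in L^2$ together with a Gronwall step to force $|\psi_2(x,z)|\to 0$. Both arguments ultimately rest on the same $j$-monotonicity identity (the paper applies it to $U$ directly; you use it implicitly, since $\Psi\in L^2$ is itself a consequence of the same computation applied to the Weyl solution). The paper's version is slightly more self-contained in that it avoids the Weyl-solution machinery and the Gronwall estimate, while yours leverages the lemmas of Section~\ref{sectionDiracBackground} more heavily and in passing yields the mildly stronger conclusion $|u_1(x,z)|\to\infty$.
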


\begin{proof}
By Lemma \ref{pass.to.new.h}, it suffices to show that $\liminf_{x\to\infty} \frac{1}{x}\log|u_1(x,z)| \ge 0.$ Since $U$ solves \eqref{eigenequation}, it follows that 
\begin{align*}
\partial_x(U(x,z)^*jU(x,z))=-2 \Im z U(x,z)^*U(x,z).
\end{align*}
Integrating and using $U(x,z)^*jU(x,z)=|u_2(x,z)|^2 - |u_1(x,z)|^2$ and $U(0,z)^*jU(0,z) = 0$ gives
\begin{align*}
|u_1(x,z)|^2=|u_2(x,z)|^2+2\Im z\int_0^xU(s,z)^*U(s,z)ds.
\end{align*}
Note that the integral is increasing in $x$ and positive for any $x_0>0$. Fixing $x_0>0$ gives a constant $C>0$ such that 
\[
	|u_1(x,z)|^2 \geq C\Im z
\]
for all $x > x_0,$ which concludes the proof.
\end{proof}

\begin{lemma}
Let $(a_j, b_j)$ be a gap of $\E=\sigma_{\text{ess}}(\Lambda_\varphi)$, i.e., a connected component of $\bbR \setminus \E$. For any $\epsilon > 0$, the number of zeros of $u_1(x,z)-u_2(x,z)$ in $z \in (a_j + \epsilon, b_j - \epsilon)$ is bounded uniformly in $x \ge 1$.
\end{lemma}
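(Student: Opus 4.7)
The plan is to identify the zeros in $z$ of $u_1(x,z) - u_2(x,z)$ with the eigenvalues of a self-adjoint ``Dirichlet--Dirichlet'' Dirac operator $A_x$ on $[0,x]$, and then control their number in the gap via a finite-rank self-adjoint perturbation argument comparing $A_x$ with $\Lambda_\varphi$.

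More precisely, let $A_x$ denote the Dirac operator on $[0,x]$ with boundary conditions $f_1(0) = f_2(0)$ and $f_1(x) = f_2(x)$. Because the Dirichlet solution $U(\cdot, z)$ is the unique (up to scalar) solution of $\Lambda_\varphi V = zV$ satisfying the left boundary condition, the eigenvalues of $A_x$ are exactly the zeros of $u_1(x,\cdot) - u_2(x,\cdot)$, and these are automatically simple (the eigenspace of a 2-dimensional first-order system satisfying a one-parameter boundary condition at $0$ is at most one-dimensional). Let $B_x$ denote the Dirac operator on $[x,\infty)$ with boundary condition $f_1(x) = f_2(x)$; after translation $B_x$ is a half-line operator with shifted data $\varphi(\cdot + x)$ still obeying \eqref{varphiL2locunif}, hence $B_x$ is self-adjoint and $\sigma_\ess(B_x) = \E$ (the essential spectrum depends only on the behavior of $\varphi$ near $+\infty$). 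Both $\Lambda_\varphi$ and the direct sum $A_x \oplus B_x$, viewed as operators on $L^2([0,\infty), \bbC^2)$, are self-adjoint extensions of the common symmetric restriction of $\Lambda_\varphi$ obtained by additionally requiring $f(x) = 0$. The boundary data at the interior point $x$ for this $2\times 2$ first-order system lives in $\bbC^2 \oplus \bbC^2$ equipped with a nondegenerate boundary form of signature $(2,2)$, so the deficiency indices at $x$ are $(2,2)$, and Krein's formula yields that
\[
(\Lambda_\varphi - i)^{-1} - (A_x \oplus B_x - i)^{-1}
\]
has rank at most $2$.

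The standard interlacing theorem for rank-$n$ self-adjoint perturbations (valid because the endpoints of $I := (a_j + \epsilon, b_j - \epsilon)$ lie outside the common essential spectrum $\E$) then yields
\[
N_{A_x}(I) \le N_{A_x \oplus B_x}(I) \le N_{\Lambda_\varphi}(I) + 2,
\]
where $N_T(I)$ counts the eigenvalues of the self-adjoint operator $T$ in $I$. Since $I$ is compactly contained in the resolvent gap $(a_j, b_j)$ and the point spectrum of $\Lambda_\varphi$ inside $(a_j, b_j)$ is a discrete set whose only possible accumulation points are $a_j$ and $b_j$, $N_{\Lambda_\varphi}(I)$ is finite. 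The resulting upper bound is independent of $x$, proving the lemma.

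The main technical step is the rank-$2$ conclusion, i.e.\ correctly identifying $\Lambda_\varphi$ and $A_x \oplus B_x$ as two self-adjoint extensions of a common symmetric operator with deficiency indices $(2,2)$ at the interior point $x$. Once this is in hand, the remaining ingredients---simplicity of the zeros, invariance of essential spectrum under truncation/shift of $\varphi$, discreteness of point spectrum in spectral gaps, and the classical interlacing estimate for finite-rank perturbations---are standard.
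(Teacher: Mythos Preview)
Your proof is correct and takes a genuinely different route from the paper's. The paper works complex-analytically: it considers the diagonal Green's function $g(x,z) = -1/(m_-(x,z)+m_+(x,z))$, which is Herglotz, has at most $n_j(\epsilon)$ poles in $(a_j+\epsilon,b_j-\epsilon)$ (coming from eigenvalues of $\Lambda_\varphi$), and has a zero at every pole of $m_-(x,\cdot)$; since zeros and poles of a Herglotz function interlace, this gives at most $n_j(\epsilon)+1$ poles of $m_-$, which are precisely the zeros of $u_1-u_2$. Your argument is operator-theoretic: you realize $\Lambda_\varphi$ and $A_x\oplus B_x$ as two self-adjoint extensions of the common restriction with $f(x)=0$, invoke Krein's formula to get a rank-$2$ resolvent difference, and apply the finite-rank interlacing theorem for eigenvalue counts in a gap of the common essential spectrum. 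Both arguments are at bottom interlacing statements, but encoded differently---the paper via the analytic structure of a scalar Herglotz function built from the two half-line $m$-functions, yours via abstract extension theory. Your approach is slightly less sharp (it yields $N_{\Lambda_\varphi}(I)+2$ rather than $N_{\Lambda_\varphi}(I)+1$) but is more portable to other self-adjoint systems where an explicit Green's function factorization may be less convenient.
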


\begin{proof}
The Weyl $M$-matrix centered at $x$, $M(x,z)$, is a matrix-valued Herglotz function that is holomorphic on the complement of the spectrum and has the symmetry $M(x,\bar z)^* = M(x,z)$. From \cite{ClaGes02} we note that it's first diagonal entry $M_{11}$ is given by
\[
	g(x,z) := \frac{-1}{m_-(x,z) + m_+(x,z)},
\]
where $m_+(x,z)$, $m_-(x,z)$ are Weyl functions for restrictions of Dirac operators $L_\varphi$ on $[x,\infty)$ and $[0,x]$, respectively; they are related to Weyl functions for $\Lambda_\varphi$ on the same intervals by the Cayley transform \eqref{eqnCayley}. Thus, $g$ is Herglotz, holomorphic away from the spectrum, and obeys $g(x,\bar{z}) = \overline{g(x,z)}$. For any $\epsilon > 0$, the Dirac operator has a finite number $n_j(\epsilon)$ of eigenvalues on $(a_j+\epsilon,b_j-\epsilon)$. Thus, $M(x,z)$ has $n_j(\epsilon)$ poles in $(a_j + \epsilon, b_j - \epsilon),$ and it follows that $g(x,z)$ has at most $n_j(\epsilon)$ poles in $(a_j + \epsilon, b_j - \epsilon)$. Moreover, every pole of $m_-(x,z)$ is a zero of $g(x,z)$. Since zeros and poles of Herglotz functions strictly interlace, $g$ has at most $n_j(\epsilon)+1$ zeros on $(a_j + \epsilon, b_j - \epsilon),$ so that $m_-(x,z)$ has at most $n_j(\epsilon)+1$ poles on $(a_j + \epsilon, b_j - \epsilon).$ Now, $m_-(x,z)=\infty$ precisely when $s_-(x,z) = 1$, i.e., when $u_1(x,z) - u_2(x,z) = 0$. So $u_1(x,z) - u_2(x,z)$ has at most $n_j(\epsilon)+1$ zeros in $(a_j + \epsilon, b_j - \epsilon)$. 
\end{proof}

\begin{theorem}
\label{precompactness.theorem}
\begin{enumerate}
	\item The family $\mathcal{F}=\{h(x,z)\}_{x \in [1,\infty)}$ is precompact in $\mathcal{D}'(\C)$. 
	\item For any convergent sequence $h(x_j, \cdot)$ in $\mathcal{D}'(\C)$, the limit $h(z) = \lim_{j \to \infty} h(x_j, z)$ is subharmonic on $\C$, harmonic on $\C \setminus \E$, and $h(x_j, \cdot)$ converges to $h(\cdot)$ uniformly on compact subsets of $\C\setminus \E$. 
\end{enumerate}
\end{theorem}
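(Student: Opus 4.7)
The plan is to apply the classical compactness theory for subharmonic functions. Each $h(x, \cdot)$ is subharmonic on $\bbC$ because $u_1(x, z) - u_2(x, z)$ is an entire function of $z$ (as a linear combination of entries of the solution to \eqref{eigenequation} with $U(0, z) = \binom{1}{1}$, which depends holomorphically on $z$), so its log-modulus is subharmonic. For a local uniform upper bound, I rewrite \eqref{eigenequation} as $\partial_x U = ij(z - \Phi) U$ and apply Gronwall's inequality to obtain
\[
\lVert U(x, z) \rVert \le \sqrt{2}\,\exp\bigl( \lvert z \rvert x + (x+1)\vertiii{\varphi}_1 \bigr),
\]
hence $h(x, z) \le \lvert z \rvert + C$ for all $x \ge 1$, with $C$ depending only on $\vertiii{\varphi}_1$. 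Thus $\mathcal{F}$ is locally uniformly bounded above.

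Part (1) now follows from the standard compactness dichotomy for subharmonic functions: a locally uniformly bounded-above family is precompact, with the only alternative to an $L^1_{\loc}$-convergent subsequence being one that tends to $-\infty$ uniformly on compact sets. This alternative is ruled out by Lemma \ref{pointwise.h.lower.bound}: fixing any $z_0 \in \bbC_+$ gives $\liminf_{x \to \infty} h(x, z_0) \ge 0$, which prevents $h(x_j, z_0) \to -\infty$ along any subsequence. Hence $\mathcal{F}$ is precompact in $\mathcal{D}'(\bbC)$ and every $\mathcal{D}'$-limit is subharmonic.

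For the harmonicity claim in (2), the Riesz measure $\mu_x$ of $h(x, \cdot)$ equals $\frac{1}{x}$ times the zero counting measure of $u_1(x, \cdot) - u_2(x, \cdot)$. These zeros are real: the condition $u_1(x, z) = u_2(x, z)$ together with $U(0, z) = \binom{1}{1}$ identifies $z$ as an eigenvalue of the self-adjoint Dirac operator on $[0, x]$ with boundary conditions $f_1(0) = f_2(0)$ and $f_1(x) = f_2(x)$. By the preceding lemma, on any compact subset of a spectral gap of $\E$ the number of such zeros is uniformly bounded in $x \ge 1$, so for any compact $K \subset \bbC \setminus \E$ we have $\mu_{x_j}(K) = O(1/x_j) \to 0$. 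Since $\mathcal{D}'$-convergence of subharmonic functions implies weak convergence of their Riesz measures, the Riesz measure of $h$ vanishes on $\bbC \setminus \E$, so $h$ is harmonic there.

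Finally, uniform convergence on compact subsets of $\bbC \setminus \E$ follows from the classical principle that subharmonic functions converging in $L^1_{\loc}$ to a continuous limit converge uniformly on compact subsets of the continuity region; here $h$ is harmonic, hence continuous, on $\bbC \setminus \E$. I expect this last step to be the main obstacle, since each $h(x_j, \cdot)$ may have genuine $-\infty$ singularities at real zeros lying inside gaps of $\E$; absorbing these singular contributions requires the uniform bound on their number from the preceding lemma together with the uniform mass decay $\mu_{x_j}(K) \to 0$, which is what allows the $L^1_{\loc}$ convergence to upgrade to uniform convergence in the appropriate sense on compacta of $\bbC \setminus \E$.
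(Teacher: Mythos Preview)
Your argument for part~(1) and for harmonicity in part~(2) is correct and matches the paper's approach: Gronwall for the uniform upper bound, Lemma~\ref{pointwise.h.lower.bound} to rule out escape to $-\infty$, then H\"ormander's compactness criterion; and the Riesz-measure argument for harmonicity is equivalent to what the paper does.

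The gap is in the uniform-convergence step. The ``classical principle'' you invoke is false as stated: take $u_n(z) = \tfrac{1}{n}\log|z|$, which converges to $0$ in $L^1_{\loc}$ (and $0$ is continuous everywhere) yet $u_n(0)=-\infty$ for every $n$. What \emph{is} true is that \emph{harmonic} functions converging in $L^1_{\loc}$ converge locally uniformly, via the mean value property. Your $h(x_j,\cdot)$ are not harmonic on $\bbC\setminus\E$ because of the real zeros in the gaps, so the principle does not apply directly, and you acknowledge this but do not supply the fix.

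The paper carries out precisely the repair you only allude to: on a fixed compact $K\subset\bbC\setminus\E$, it collects the (boundedly many, by the preceding lemma) zeros of $u_1(x_j,\cdot)-u_2(x_j,\cdot)$ into a monic polynomial $p_j$ of uniformly bounded degree and sets
\[
f_j(z)=\frac{1}{x_j}\log\left|\frac{u_1(x_j,z)-u_2(x_j,z)}{p_j(z)}\right|.
\]
Then $f_j$ is genuinely harmonic on a neighborhood of $K$, and since $\tfrac{1}{x_j}\log|p_j|\to 0$ in $L^1_{\loc}$ (bounded degree, zeros confined to a compact, $1/x_j$ prefactor), one has $f_j\to h$ in $L^1_{\loc}$ and hence uniformly on $K$ by the harmonic-function principle. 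You named the two ingredients (bounded zero count, $1/x_j$ mass decay) but did not assemble them; the explicit polynomial factoring is the step that turns your sketch into a proof.
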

\begin{proof}
Lemma \ref{pointwise.h.lower.bound} gives a lower bound on $h(x,z)$ for each $z \in \C_+$. Using a similar Volterra-type integral expression for $U(x,z)$ as is used for $\Psi(x,z)$ in Lemma \ref{alternate.cpt.support.lemma} and applying Gronwall's inequality gives a uniform upper bound on $h(x,z)$ on compact subsets of $\C$. Lemma~\ref{pointwise.h.lower.bound} gives a pointwise lower bound at an arbitrary point in $\bbC_+$, so by \cite[Theorem 4.1.9]{Hoermander1}, $\mathcal{F}$ is a precompact family in $\mathcal{D}'(\C)$. The rest of the proof follows exactly as in \cite[Theorem 4.3]{EL}, except, of course, that the function $f_j(z)$ should be defined instead as
\[
	f_j(z) = \frac{1}{x_j} \log \left|\frac{u_1(x,z) - u_2(x,z)}{p_j(z)}\right|,
\]
where $p_j$ is the monic polynomial with at most $n_j(\epsilon)$ zeros precisely at the zeros of $u_1 - u_2$.
\end{proof}

We now examine the asymptotics at $\infty$ of subsequential limits of $h(x,z)$.

\begin{theorem} \label{thmharmonicsubseqlimit1}
Let $x_j \to \infty$ be a sequence such that $h_j = h(x_j,\cdot)$ converge in $\cD'(\bbC)$. Then $h= \lim_{j\to\infty} h_j$ defines a positive harmonic function in $\Omega$ and there exists a positive constant $a$ such that $h$ has the asymptotic behavior
\begin{equation}\label{14jun2.2}
h(z) = \Im \Big(z - \frac a{2z}\Big) +o(|z|\inv), 
\end{equation}
as $z \to \infty$, $\arg z \in [\delta, \pi - \delta]$ for any $\delta >0$. Furthermore, the constant $a$ satisfies
\begin{equation}
\label{subsequential.universal.inequality}
a \le \liminf_{j\to\infty} \frac 1{x_j} \int_0^{x_j} \lvert \varphi(t)\rvert^2 \,dt.
\end{equation}
\end{theorem}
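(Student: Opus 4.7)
The plan is to extract asymptotic information on $h$ from Theorem~\ref{dirichlet.exp.growth} along the subsequence $x_j$, apply Lemma~\ref{expansion.for.linear.h} to obtain the two-term expansion, and isolate the coefficient $a$ by evaluating along the imaginary axis. By Theorem~\ref{precompactness.theorem}, $h$ is harmonic on $\Omega$ and $h_j\to h$ uniformly on compact subsets of $\Omega$. Since $\bbC_+\subset\Omega$, Lemma~\ref{pointwise.h.lower.bound} combined with this uniform convergence gives $h\ge 0$ on $\bbC_+$, and the symmetry \eqref{hsymmetry} extends this to $\Omega$.

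Next, Theorem~\ref{dirichlet.exp.growth} and Lemma~\ref{pass.to.new.h} combine to give, for $\arg z\in[\delta,\pi-\delta]$,
\[
\limsup_{j\to\infty}\bigl|h_j(z)-\Im z-\Re F_j(z)\bigr|=O(|z|^{-2}),\qquad F_j(z):=\frac{1}{2izx_j}\int_0^{x_j}\overline{\varphi(t)}\,(w_z*\varphi)(t)\,dt.
\]
Substituting the explicit form of $w_z$ from \eqref{wzdefinition} and swapping the order of integration converts $F_j$ into an autocorrelation expression
\[
F_j(z)=\int_0^1 e^{2izs}K_j(s)\,ds,\qquad K_j(s):=\frac{1}{x_j}\int_0^{x_j}\overline{\varphi(t)}\,\varphi(t+s)\,dt.
\]
By Cauchy--Schwarz and \eqref{varphiL2locunif}, $\sup_{s\in[0,1]}|K_j(s)|\le 2\vertiii{\varphi}_2^2$, so $|F_j(iy)|\le C/y$; since $h_j(z)\to h(z)$, this gives $h(iy)=y+O(1/y)$ as $y\to\infty$. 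Because $h$ is positive harmonic on $\bbC_+$, Lemma~\ref{expansion.for.linear.h} yields \eqref{14jun2.2} with some $a\ge 0$.

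To establish \eqref{subsequential.universal.inequality}, apply the polarization identity
\[
2\Re\bigl[\overline{\varphi(t)}\varphi(t+s)\bigr]=|\varphi(t)|^2+|\varphi(t+s)|^2-|\varphi(t+s)-\varphi(t)|^2
\]
and \eqref{varphiL2locunif} to obtain
\[
\Re K_j(s)\le m_j+O(1/x_j),\qquad m_j:=\frac{1}{x_j}\int_0^{x_j}|\varphi(t)|^2\,dt,
\]
for $s\in[0,1]$. Integrating against $2y e^{-2ys}$ over $[0,1]$ gives $2y\Re F_j(iy)\le m_j+O(1/x_j)$. On the other hand, the previous paragraph shows $\liminf_j\Re F_j(iy)=h(iy)-y+O(y^{-2})$, which by \eqref{14jun2.2} equals $a/(2y)+o(1/y)$; multiplying by $2y$ gives $\liminf_j 2y\Re F_j(iy)=a+o(1)$ as $y\to\infty$. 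Passing $\liminf_j$ through the first inequality and then letting $y\to\infty$ yields $a\le\liminf_j m_j$.

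The main obstacle is that the limits $j\to\infty$ and $z\to\infty$ do not commute, as highlighted by Example~\ref{xmpl11}: the second-order correction in Theorem~\ref{dirichlet.exp.growth} involves the full autocorrelation $K_j(s)$ rather than just $m_j=K_j(0)$, and oscillations in $\varphi$ can produce cancellations in $\Re K_j(s)$ for $s>0$ that make the $y\to\infty$ limit strictly smaller than $m_j$. The polarization identity captures precisely the one-sided bound $\Re K_j(s)\le m_j+o(1)$, which is why \eqref{subsequential.universal.inequality} is an inequality with $\liminf$ rather than an equality with $\lim$.
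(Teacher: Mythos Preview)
Your proof is correct and follows essentially the same strategy as the paper: use Theorem~\ref{dirichlet.exp.growth} and Lemma~\ref{pass.to.new.h} to control $h_j(iy)-y$ up to $O(y^{-2})$, bound the second-order term by $m_j/(2y)$, and invoke Lemma~\ref{expansion.for.linear.h}. The paper obtains the bound $|F_j(iy)|\le m_j/(2y)$ in one stroke via Cauchy--Schwarz and Young's inequality (using $\lVert w_{iy}\rVert_1\le 1$), whereas you unwind $F_j$ into the autocorrelation form $\int_0^1 e^{-2ys}K_j(s)\,ds$ and use polarization to bound $\Re K_j(s)\le m_j+O(1/x_j)$; these are two packagings of the same Cauchy--Schwarz estimate, and your version makes the role of the autocorrelation more transparent at the cost of a few extra lines.
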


\begin{proof}
Harmonicity was proved in Theorem \ref{precompactness.theorem} and positivity in Lemma \ref{pointwise.h.lower.bound}. If we denote
\[
\alpha_j = \frac{1}{x_j} \int_{0}^{x_j} \overline{\varphi(t)} (w_z * \varphi)(t)dt, \qquad a_j =  \frac 1{x_j} \int_0^{x_j} \lvert \varphi(t)\rvert^2 \,dt,  \qquad h_j(z) = h(x_j, z),
\]
then Cauchy-Schwarz together with $\lVert w_z * \varphi \rVert_2 \le \lVert \varphi \rVert_2$ implies that $|\alpha_j| \leq a_j$. With this notation, we can write
\begin{equation}
\label{hj.form}
	h_j(iy) - y = -\frac{\alpha_j}{2y} + \frac{1}{x_j} \log|u_1(x_j,iy)| - y + \frac{\alpha_j}{2y} + \frac{1}{x_j} \log\frac{|u_1(x_j, iy) - u_2(x_j,iy)|}{|u_1(x_j,iy)|}.
\end{equation}
The triangle inequality then gives
\[
	|h_j(iy) -y| \leq \frac{a_j}{2y} + \Big|\frac{1}{x_j} \log|u_1(x_j)| - y + \frac{\frac{1}{x_j} \int_{0}^{x_j} \overline{\varphi} (w_{iy} * \varphi)}{2y}\Big| + \Big|\frac{1}{x_j} \log\frac{|u_1(x_j) - u_2(x_j)|}{|u_1(x_j)|}\Big|
\]
Taking the $\liminf_{j \to \infty}$, on the right hand side we have by Theorem \ref{dirichlet.exp.growth} that the second term is $O(y^{-2})$ and by Lemma \ref{pass.to.new.h} the last term is $o(1)$. Thus we obtain
\[
y - \frac{d}{2y} - \frac C{y^2} \le h(iy) \le y + \frac{d}{2y} +  \frac C{y^2},
\]
as $y \to \infty$, with $d = \liminf_{j\to\infty} a_j$. By Lemma \ref{expansion.for.linear.h}, this implies that $h$ has an expansion \eqref{14jun2.2} and $a \le d$.
\end{proof}

\begin{proof}[Proof of Theorems~\ref{thm11}, \ref{thm12}, and \ref{thm13}]
By the precompactness of the family $\{h(x,z)\}_{x \in [1,\infty)}$ shown in Theorem \ref{precompactness.theorem}, there exists a sequence $x_j \to \infty$ such that $h(x_j,z)=\frac{1}{x_j}\log |u_1(x_j,z) - u_2(x_j,z)|$ converges in $\mathcal{D}'(\C)$ to a function $h(z)$. By Theorem \ref{thmharmonicsubseqlimit1}, $h$ is positive and harmonic in $\Omega$ and $h(iy)/y \to 1$ as $y \to \infty$. This asymptotic behavior implies by Lemma \ref{AL.criterion} that $\Omega$ is Greenian, $\E$ is an Akhiezer-Levin set, and $h \geq M_\E$ in $\Omega$. As a positive harmonic function on $\C_+$, $M_\E(z)$ can be represented as
\[
M_\E(x + iy) = \alpha y + \int \frac{y}{(x-t)^2 + y^2} d\nu(t), \qquad \int \frac{d\nu(t)}{1+t^2} < \infty.
\]
Further, the normalization \eqref{MartinFuncNormalization} implies that $\alpha = 1$, and we see that $M_\E(z) - \Im z$ also defines a positive harmonic function on $\C_+$. Thus $h \geq M_\E$ implies 
\[
M_\E(iy) - y = O(1/y), \qquad y\to \infty.
\]
Lemma \ref{expansion.for.linear.h} completes the proof of Theorem \ref{thm11}. Then, using the expansion for $M_\E(z)$ given by Theorem \ref{thm11} and the fact from Lemma \ref{AL.criterion} that $M_\E(z) \leq h(z)$, the bound \eqref{subsequential.universal.inequality} proves Theorem \ref{thm12}. To prove Theorem \ref{thm13}, we first note that by Lemma \ref{pass.to.new.h} and the fact that 
\[
 |u_1(x,z)| \leq \|U(x,z)\| \leq \sqrt{2}|u_1(x,z)|,
\]
it is enough to show that $M_\E(z) \leq \liminf_{x\to \infty} \frac{1}{x}\log|u_1(x,z)- u_2(x,z)|$ for all $z \in \Omega$. Fix $z_0 \in \C\setminus\R$ and consider a sequence $x_n \to \infty$ such that 
\[
	\lim_{n \to \infty} \frac{1}{x_n} \log |u_1(x_n,z_0) - u_2(x_n,z_0)| = \liminf_{x\to \infty} \frac{1}{x} \log|u_1(x,z_0) - u_2(x,z_0)|.
\]
Passing to a subsequence, we may write $h = \lim_{j \to \infty} \frac{1}{x_{n_j}} \log|u_1(x_{n_j}, z_0) - u_2(x_{n_j}, z_0)|$ and $h \geq M_\E$ in $\Omega$. Thus
\[
	M_\E(z_0) \leq h(z_0) = \liminf_{x\to \infty} \frac{1}{x} \log|u_1(x,z_0) - u_2(x,z_0)|,
\]
which concludes the proof.
\end{proof}

Although Theorem~\ref{thmharmonicsubseqlimit1} allows us to prove Theorems~\ref{thm11}, \ref{thm12}, and \ref{thm13}, it does not give us optimal control of the constant $a$ in the second term. So far, we have used precompactness of $h(x,\cdot)$ in $\cD'(\bbC)$. To refine these results, we will also need to use precompactness of $\sigma_x$ in the vague topology. This will be the subject of the following sections.

\section{Vague convergence} \label{sectionvague}

It is easier to work with weak-$*$ convergence on a compact space, so we begin by reinterpreting some notions from the introduction on the extended real line $\overline{\bbR} = \bbR \cup \{\infty\}$. We  denote by $\cM(\overline{\bbR} )$ the set of positive finite measures on $\overline{\bbR}$, equipped with the weak-$*$ topology dual to $C(\overline{\bbR})$. We will repeatedly use the fact that for any $r > 0$,  the weak-$*$ topology is metrizable on the closed ball $\{ \sigma \in \cM(\overline{\bbR}) \mid \sigma(\overline{\bbR}) \le r \}$ and that this ball is weak-$*$ compact by the Banach--Alaoglu theorem. 

Let us extend the measures $\sigma_x$ by $\sigma_x(\{\infty\}) = 0$ to elements of $\cM(\overline{\bbR} )$, and denote by $\cS'(\varphi)$ the set of sequential weak-$*$ limits of $\sigma_x$ in $\cM(\overline{\bbR})$ as $x \to \infty$.

\begin{lemma}
Let $\vertiii{\varphi}_2 < \infty$. The set $\cS'(\varphi) \subset \cM(\overline{\bbR} )$ is a nonempty compact set.
\end{lemma}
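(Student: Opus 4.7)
The plan is to exploit the fact that the measures $\sigma_x$ for $x\ge 1$ all live in a common weak-$*$ closed ball of $\cM(\overline{\bbR})$, on which the weak-$*$ topology is metrizable and compact, and then combine Banach--Alaoglu with a standard diagonal argument.

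First I would verify uniform boundedness of total mass. By unitarity of the Fourier transform and then by partitioning $[0,x]$ into unit subintervals,
\[
\sigma_x(\overline{\bbR}) = \sigma_x(\bbR) = \frac 1x \int_0^x \lvert \varphi(t)\rvert^2 \, dt \le \frac{\lceil x \rceil}{x} \vertiii{\varphi}_2^2 \le 2 \vertiii{\varphi}_2^2, \qquad x \ge 1.
\]
So, with $R = 2\vertiii{\varphi}_2^2$, the family $\{\sigma_x\}_{x\ge 1}$ lies in the closed ball $B_R := \{\mu\in\cM(\overline{\bbR}) \mid \mu(\overline{\bbR})\le R\}$. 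Since $C(\overline{\bbR})$ is separable, the weak-$*$ topology on $B_R$ is metrizable; fix a compatible metric $d$. By Banach--Alaoglu, $B_R$ is weak-$*$ compact, hence sequentially compact.

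Nonemptiness is immediate: pick any sequence $x_n \to \infty$; by sequential compactness of $B_R$, a subsequence converges weak-$*$ to some $\mu \in B_R$, and $\mu \in \cS'(\varphi)$.

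For compactness, since $\cS'(\varphi) \subset B_R$ it suffices to prove that $\cS'(\varphi)$ is sequentially closed in $B_R$. Let $\mu_n \in \cS'(\varphi)$ with $\mu_n \to \mu$ weak-$*$. By definition of $\cS'(\varphi)$, for each $n$ there is a sequence $y_{n,k} \to \infty$ as $k\to\infty$ with $\sigma_{y_{n,k}} \to \mu_n$ weak-$*$, i.e.\ $d(\sigma_{y_{n,k}},\mu_n)\to 0$ as $k\to \infty$. A diagonal choice $x_n := y_{n,k_n}$ with $k_n$ so large that $y_{n,k_n} \ge n$ and $d(\sigma_{y_{n,k_n}},\mu_n) < 1/n$ yields $x_n \to \infty$ and
\[
d(\sigma_{x_n},\mu) \le d(\sigma_{x_n},\mu_n) + d(\mu_n,\mu) \to 0,
\]
so $\mu \in \cS'(\varphi)$, completing the proof. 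The main subtlety, minor as it is, lies in the diagonal extraction; compactifying to $\overline{\bbR}$ (rather than working with vague convergence on $\bbR$) is what makes the total masses uniformly bounded and places everything inside a metrizable compact set, avoiding any need to rule out loss of mass at infinity.
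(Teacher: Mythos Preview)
Your proof is correct and follows essentially the same approach as the paper: establish the uniform mass bound $\sigma_x(\overline{\bbR}) \le 2\vertiii{\varphi}_2^2$ for $x\ge 1$, then invoke metrizability and weak-$*$ compactness (Banach--Alaoglu) of the closed ball in $\cM(\overline{\bbR})$. The paper's proof is terser, simply saying ``the rest follows from metrizability and precompactness of bounded sets,'' whereas you spell out the diagonal argument for closedness explicitly; but the underlying idea is identical.
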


\begin{proof}
For $x \ge 1$,
\[
\sigma_x(\overline{\bbR} ) = \sigma_x(\bbR) \le \frac{\lceil x \rceil}x \vertiii{\varphi}_2^2 \le 2 \vertiii{\varphi}_2^2
\]
so the set of measures $\{ \sigma_x \mid x \ge 1\}$ is a bounded subset of $\cM(\overline{\bbR})$. The rest follows from metrizability and precompactness of bounded sets in $\cM(\overline{\bbR})$.
\end{proof}

\begin{lemma} \label{lemmameasurerestriction}
The sets $\cS(\varphi)$, $\cS'(\varphi)$ are related by
\[
\cS(\varphi) = \{ \sigma\vert_{\bbR} \mid \sigma \in \cS'(\varphi) \},
\]
where $\sigma\vert_{\bbR}$ denotes the restriction of $\sigma$ to the Borel $\sigma$-algebra on $\bbR$.
\end{lemma}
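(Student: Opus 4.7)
The plan is to prove the two set inclusions separately, using the fact that a function in $C_0(\bbR)$ is precisely the restriction to $\bbR$ of a function in $C(\overline{\bbR})$ vanishing at $\infty$, together with metrizability of bounded subsets of $\cM(\overline{\bbR})$.

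For the inclusion $\supseteq$, I would take $\sigma \in \cS'(\varphi)$, so there is a sequence $x_j \to \infty$ with $\sigma_{x_j} \to \sigma$ weak-$*$ in $\cM(\overline{\bbR})$. Given any $f \in C_0(\bbR)$, extend $f$ to a function $\tilde f \in C(\overline{\bbR})$ by $\tilde f(\infty) = 0$. Since $\sigma_{x_j}(\{\infty\}) = 0$ by construction and $\tilde f(\infty) = 0$, we have $\int f\, d\sigma_{x_j}\vert_{\bbR} = \int \tilde f\, d\sigma_{x_j} \to \int \tilde f\, d\sigma = \int f\, d\sigma\vert_{\bbR}$, giving $\sigma_{x_j}\vert_{\bbR} \to \sigma\vert_{\bbR}$ vaguely on $\bbR$, hence $\sigma\vert_{\bbR} \in \cS(\varphi)$.

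For the inclusion $\subseteq$, I would take $\mu \in \cS(\varphi)$, with $\sigma_{x_j} \to \mu$ vaguely on $\bbR$ for some $x_j \to \infty$. As noted in the preceding lemma, the family $\{\sigma_{x} \mid x \geq 1\}$ is bounded in $\cM(\overline{\bbR})$, so the sequence $(\sigma_{x_j})$ lies in a weak-$*$ metrizable compact ball of $\cM(\overline{\bbR})$. Extracting a subsequence $x_{j_k}$ along which $\sigma_{x_{j_k}} \to \sigma$ in $\cM(\overline{\bbR})$ for some $\sigma \in \cS'(\varphi)$, the first part of the argument yields $\sigma_{x_{j_k}}\vert_{\bbR} \to \sigma\vert_{\bbR}$ vaguely. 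Since we also have $\sigma_{x_{j_k}}\vert_{\bbR} \to \mu$ vaguely, the uniqueness of vague limits forces $\sigma\vert_{\bbR} = \mu$, so $\mu$ lies in the right-hand set.

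I don't anticipate any real obstacle here; the main thing to be careful about is that vague convergence on $\bbR$ tests against $C_0(\bbR)$ (not $C_c(\bbR)$ or $C_b(\bbR)$), and that the extension by zero at $\infty$ of an arbitrary $C_0(\bbR)$ function is genuinely continuous on $\overline{\bbR}$. Both the absence of atoms of $\sigma_{x_j}$ at $\infty$ and the vanishing $\tilde f(\infty) = 0$ are needed to swap between integration against $\sigma_{x_j}$ on $\overline{\bbR}$ and integration against $\sigma_{x_j}\vert_{\bbR}$ on $\bbR$; the limit measure $\sigma$ itself may have positive mass at $\infty$ without affecting the argument.
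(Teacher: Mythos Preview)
Your proof is correct and follows essentially the same approach as the paper's: show $\supseteq$ by noting that $C_0(\bbR)$ embeds into $C(\overline{\bbR})$ as the functions vanishing at $\infty$, and show $\subseteq$ by extracting a weak-$*$ convergent subsequence in $\cM(\overline{\bbR})$ via precompactness and invoking uniqueness of vague limits. The only minor remark is that in your closing paragraph you slightly overstate what is needed: the vanishing $\tilde f(\infty)=0$ alone suffices to identify $\int \tilde f\,d\nu$ with $\int f\,d\nu\vert_{\bbR}$ for any $\nu\in\cM(\overline{\bbR})$, so the absence of atoms of $\sigma_{x_j}$ at $\infty$ is not separately required.
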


\begin{proof}
If $\sigma_{x_j} \to \sigma \in \cM(\overline{\bbR})$, then since $C_0(\bbR) \subset C(\overline{\bbR})$, $\sigma_{x_j} \to \sigma\vert_{\bbR}$ vaguely. Conversely, assume that $\sigma_{x_j} \to \mu$ vaguely. By precompactness, the sequence $\sigma_{x_j}$ has a subsequence which converges weakly to some $\sigma \in \cS'(\varphi)$, so by the argument from above, it also converges vaguely to $\sigma\vert_\bbR$. It follows that $\mu = \sigma\vert_{\bbR}$.
\end{proof}

Note that we do not claim a bijection between $\cS(\varphi)$ and $\cS'(\varphi)$; the measures in $\cS'(\varphi)$ can contain an additional point mass at $\infty$. In fact, in Example~\ref{xmpl12}, $\cS(\varphi)$ contains only the zero measure on $\bbR$ but $\cS'(\varphi)$ contains different multiples of the Dirac measure at $\infty$.

However, since we are always only interested in the restrictions of those measures on $\bbR$, in all the arguments below, Lemma~\ref{lemmameasurerestriction} allows us to essentially use $\cS(\varphi)$ and $\cS'(\varphi)$ interchangeably. 

\begin{lemma}
The infimum $\inf_{\sigma \in \cS(\varphi)} \sigma(\bbR)$ is a minimum, i.e. there exists $\tilde \sigma \in \cS(\varphi)$ such that
\begin{equation}\label{29aug6}
\inf_{\sigma \in \cS(\varphi)} \sigma(\bbR) = \tilde\sigma(\bbR).
\end{equation}
\end{lemma}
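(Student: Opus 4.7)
The plan is a standard compactness plus lower semicontinuity argument, using the one-point compactification $\overline{\bbR}$ as in the preceding lemmas. First I would work with $\cS'(\varphi) \subset \cM(\overline{\bbR})$ rather than $\cS(\varphi)$. The set $\cS'(\varphi)$ is contained in the closed ball $\{\sigma \in \cM(\overline{\bbR}) \mid \sigma(\overline{\bbR}) \le 2 \vertiii{\varphi}_2^2\}$, which is weak-$*$ compact and metrizable by Banach--Alaoglu. Since $\cS'(\varphi)$ is the set of sequential weak-$*$ cluster points of $\{\sigma_x\}_{x \ge 1}$ as $x \to \infty$, it can be written as $\bigcap_{T \ge 1} \overline{\{\sigma_x : x \ge T\}}$ and so is closed; being closed in a compact metrizable space, it is itself compact.

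Next I would show that the functional $\sigma \mapsto \sigma(\bbR)$ is lower semicontinuous on $\cM(\overline{\bbR})$ with the weak-$*$ topology. Decompose
\[
\sigma(\bbR) = \sigma(\overline{\bbR}) - \sigma(\{\infty\}).
\]
The first term is weak-$*$ continuous because $1 \in C(\overline{\bbR})$. The second term is upper semicontinuous by the Portmanteau theorem applied to the closed set $\{\infty\}$: if $\sigma_n \to \sigma$ weak-$*$, then $\limsup_n \sigma_n(\{\infty\}) \le \sigma(\{\infty\})$. Subtracting yields lower semicontinuity of $\sigma \mapsto \sigma(\bbR)$.

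Finally, by Lemma~\ref{lemmameasurerestriction},
\[
\inf_{\sigma \in \cS(\varphi)} \sigma(\bbR) = \inf_{\tau \in \cS'(\varphi)} \tau(\bbR).
\]
A lower semicontinuous function on a nonempty compact set attains its infimum, so there exists $\tilde\tau \in \cS'(\varphi)$ realizing the right-hand side. Setting $\tilde\sigma = \tilde\tau|_{\bbR} \in \cS(\varphi)$ gives the desired minimizer.

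No step poses a real obstacle; the only subtle point worth flagging is the possible loss of mass to $\infty$ under vague convergence, which is precisely why $\cS(\varphi)$ itself need not be compact and why passing to $\overline{\bbR}$ is natural. Because that loss of mass can only decrease $\sigma(\bbR)$ in the limit, it is compatible with (indeed, the source of) the lower semicontinuity used above, and hence does not obstruct attainment of the infimum.
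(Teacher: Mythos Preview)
Your proof is correct and essentially the same as the paper's: both are compactness plus lower semicontinuity via the Portmanteau theorem on $\overline{\bbR}$. The paper works directly in $\cS(\varphi)$, taking a minimizing sequence, extracting a vaguely convergent subsequence with limit in $\cS(\varphi)$, and using Portmanteau for the open set $\bbR \subset \overline{\bbR}$; you instead pass explicitly to $\cS'(\varphi)$, prove it is compact, and phrase lower semicontinuity via the closed set $\{\infty\}$ before restricting back. These are the same argument in slightly different packaging; your version has the minor virtue of making explicit why the limit lies in $\cS(\varphi)$ (via compactness of $\cS'(\varphi)$), which the paper's proof uses implicitly.
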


\begin{proof}
Denote the infimum by $C$. There exist $\sigma_n \in \cS(\varphi)$ such that $\sigma_n(\bbR) \le C+ 1/n$. By precompactness, the sequence $\sigma_n$ has a vaguely convergent subsequence with vague limit $\tilde \sigma  \in \cS(\varphi)$.  Since $\bbR$ is open in $\overline{\bbR}$, by the Portmanteau theorem $\tilde\sigma(\bbR) \le \liminf_{n\to\infty} \sigma_n(\bbR) \le C$.
\end{proof}

\begin{lemma} \label{lemma4sep}
$\inf_{\sigma \in \cS(\varphi)} \sigma(\bbR) = \sup_{L > 0} \inf_{\sigma \in \cS(\varphi)} \sigma((-L,L))$.
\end{lemma}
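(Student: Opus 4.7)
The plan is to prove two inequalities, with the nontrivial direction following from a compactness argument on $\cS'(\varphi)$ combined with Portmanteau-type lower semicontinuity.

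The easy direction is immediate: since $(-L,L) \subset \bbR$, we have $\sigma((-L,L)) \le \sigma(\bbR)$ for every $\sigma \in \cS(\varphi)$ and every $L > 0$, so $\inf_{\sigma \in \cS(\varphi)} \sigma((-L,L)) \le \inf_{\sigma \in \cS(\varphi)} \sigma(\bbR)$, and taking the supremum over $L$ preserves the inequality.

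For the reverse direction, let $C = \inf_{\sigma \in \cS(\varphi)} \sigma(\bbR)$ and suppose for contradiction that $\sup_L \inf_\sigma \sigma((-L,L)) < C$. Then there exists $\epsilon > 0$ and, for each positive integer $n$, a measure $\sigma_n \in \cS(\varphi)$ with $\sigma_n((-n,n)) \le C - \epsilon$. Using Lemma~\ref{lemmameasurerestriction}, lift each $\sigma_n$ to $\sigma_n' \in \cS'(\varphi)$ with $\sigma_n'|_\bbR = \sigma_n$. By compactness of $\cS'(\varphi)$ in the weak-$*$ topology on $\cM(\overline{\bbR})$, a subsequence $\sigma_{n_k}'$ converges weakly to some $\tau \in \cS'(\varphi)$.

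Now fix any $L > 0$. For all sufficiently large $k$, $n_k > L$, so $\sigma_{n_k}'((-L,L)) \le \sigma_{n_k}'((-n_k,n_k)) \le C - \epsilon$. Since $(-L,L)$ is open in $\overline{\bbR}$, the Portmanteau theorem for weak-$*$ convergence gives $\tau((-L,L)) \le \liminf_{k\to\infty} \sigma_{n_k}'((-L,L)) \le C - \epsilon$. Letting $L \to \infty$ and using continuity of measure from below, $\tau(\bbR) = \lim_{L\to\infty}\tau((-L,L)) \le C - \epsilon$. But $\tau|_\bbR \in \cS(\varphi)$, so $\tau(\bbR) \ge C$, contradicting our assumption. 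The main (modest) obstacle is being careful with the distinction between $\cS(\varphi)$ and $\cS'(\varphi)$ and the direction of the Portmanteau inequality for open sets; the rest is routine.
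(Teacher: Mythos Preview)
Your proof is correct and follows essentially the same route as the paper: both use compactness to extract a convergent subsequence from a minimizing sequence and then invoke the Portmanteau inequality on the open sets $(-L,L)$, letting $L\to\infty$. The only cosmetic difference is that you explicitly lift to $\cS'(\varphi)$ and use weak-$*$ compactness on $\cM(\overline{\bbR})$, whereas the paper works directly with vague limits in $\cS(\varphi)$; these are equivalent via Lemma~\ref{lemmameasurerestriction}.
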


\begin{proof}
One inequality follows immediately from $\sigma(\bbR) \ge \sigma((-L,L))$. To prove the opposite inequality, fix $C > \sup_{L > 0} \inf_{\sigma \in \cS(\varphi)} \sigma((-L,L))$. Then for each $n \in \bbN$ there exists $\sigma_n \in \cS(\varphi)$ such that $\sigma_n((-n,n)) \le C$ for all $n$. There exists a subsequence $\sigma_{n_k}$ which converges vaguely; its limit $\tilde \sigma$ is an element of $\cS(\varphi)$. For any $k \ge j$,
\[
\sigma_{n_k}((-n_j, n_j)) \le  \sigma_{n_k}((-n_k, n_k)) \le C,
\]
so by the Portmanteau theorem, $\tilde \sigma((-n_j, n_j)) \le C$. Since this holds for any $j$, it follows that $\tilde \sigma(\bbR) \le C$, which completes the proof.
\end{proof}

To study the local averages from Theorem~\ref{thmlocalaverages}, we denote
\begin{equation}\label{eqnsinc}
g_\epsilon = \frac{\sqrt{2\pi}}{\epsilon} \chi_{[-\epsilon,0]}, \qquad \widehat g_\epsilon(k) = \frac{e^{i\epsilon k}-1}{i\epsilon k}
\end{equation}
(with $\hat g_\epsilon(0)=1$, of course). To simplify the notation related to convolutions, let us assume that $\varphi$ is also defined for negative $x$ with $\varphi(x) = 0$ for $x < 0$. Then the local averages from Theorem~\ref{thmlocalaverages} can be written in the form
\[
\frac 1x \int_0^x \left\lvert \frac 1{\epsilon} \int_{t}^{t+\epsilon}  \varphi(s)\,ds \right\rvert^2 \,dt =  \frac 1{2\pi x} \int_0^x \left\lvert (g_\epsilon * \varphi)(t)  \right\rvert^2 \,dt.
\]
The difference $\chi_{[0,x]}  (g_\epsilon* \varphi) - g_\epsilon * (\chi_{[0,x]} \varphi)$ is zero outside of $(-\epsilon,\epsilon) \cup  (x-\epsilon,x+\epsilon)$ and bounded by $2\lVert g_\epsilon \rVert_1 \vertiii{\varphi}_1$ there, so we conclude 
\[ 
\frac 1x \int_0^x \left\lvert \frac 1{\epsilon} \int_{t}^{t+\epsilon}  \varphi(s)\,ds \right\rvert^2 \,dt  =   \int \lvert \widehat g_\epsilon \rvert^2\,d\sigma_{x}+ O(x^{-1}), \qquad x \to\infty.
\]
Since $\lVert \widehat g_\epsilon \rVert_\infty =1$ and $\widehat g_\epsilon \to 1$ pointwise  as $\epsilon \downarrow 0$, by  dominated convergence,
\begin{equation}\label{30aug3}
 \sup_{\epsilon > 0}  \int \lvert \widehat g_\epsilon \rvert^2\,d\sigma  = \sigma(\bbR) =  \lim_{\epsilon \downarrow 0}  \int \lvert \widehat g_\epsilon \rvert^2 \,d\sigma.
\end{equation}

The first part of Theorem~\ref{thmlocalaverages} will use the following lemma:

\begin{lemma} \label{lemma181}
\begin{equation}\label{30aug1}
\inf_{\sigma \in \cS(\varphi)} \sigma(\bbR) =  \sup_{\epsilon > 0} \liminf_{x \to\infty}  \int \lvert \widehat g_\epsilon \rvert^2\,d\sigma_{x} = \limsup_{\epsilon \downarrow 0} \liminf_{x \to\infty}  \int \lvert \widehat g_\epsilon \rvert^2\,d\sigma_{x}.
\end{equation}
\end{lemma}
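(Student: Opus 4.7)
The plan is to establish the chain
\[
\sup_{\epsilon > 0} c_\epsilon \;\le\; \inf_{\sigma \in \cS(\varphi)} \sigma(\bbR) \;\le\; \limsup_{\epsilon \downarrow 0} c_\epsilon \;\le\; \sup_{\epsilon > 0} c_\epsilon,
\]
where $c_\epsilon := \liminf_{x \to \infty} \int \lvert \widehat g_\epsilon \rvert^2\,d\sigma_x$; the rightmost inequality is automatic. Two properties of $\widehat g_\epsilon$ will be used throughout: $\lvert \widehat g_\epsilon(k) \rvert^2 = \operatorname{sinc}^2(\epsilon k/2)$ lies in $C_0(\bbR)$ with $\lvert \widehat g_\epsilon \rvert^2 \le 1$, and $\lvert \widehat g_\epsilon \rvert^2 \to 1$ uniformly on compact subsets of $\bbR$ as $\epsilon \downarrow 0$.

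The leftmost inequality is easy. Given $\sigma \in \cS(\varphi)$, choose $x_j \to \infty$ with $\sigma_{x_j} \to \sigma$ vaguely. Since $\lvert \widehat g_\epsilon \rvert^2 \in C_0(\bbR)$, vague duality gives $\int \lvert \widehat g_\epsilon \rvert^2\,d\sigma_{x_j} \to \int \lvert \widehat g_\epsilon \rvert^2\,d\sigma \le \sigma(\bbR)$, hence $c_\epsilon \le \sigma(\bbR)$; taking the infimum over $\sigma$ and the supremum over $\epsilon$ completes this direction.

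The middle inequality is the substantive one. Set $C = \limsup_{\epsilon \downarrow 0} c_\epsilon$ and construct a witness $\sigma \in \cS(\varphi)$ with $\sigma(\bbR) \le C$ by a diagonal extraction: pick $\epsilon_n \downarrow 0$ with $c_{\epsilon_n} \to C$, then $x_n \ge n$ with $\int \lvert \widehat g_{\epsilon_n}\rvert^2\,d\sigma_{x_n} \le c_{\epsilon_n} + 1/n$. By Banach--Alaoglu and metrizability in $\cM(\overline{\bbR})$, pass to a subsequence with $\sigma_{x_n} \to \tilde\sigma$ weakly, and set $\sigma := \tilde\sigma\vert_{\bbR} \in \cS(\varphi)$ using Lemma~\ref{lemmameasurerestriction}. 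Given $L > 0$ and $\delta > 0$, uniform convergence yields $\lvert \widehat g_{\epsilon_n}\rvert^2 \ge 1 - \delta$ on $[-L,L]$ for $n$ large, so
\[
(1-\delta)\,\sigma_{x_n}((-L,L)) \le \int_{-L}^{L} \lvert \widehat g_{\epsilon_n}\rvert^2\,d\sigma_{x_n} \le C + o(1).
\]
Lower semicontinuity for open sets under weak-$*$ convergence in $\cM(\overline{\bbR})$ then gives $\sigma((-L,L)) \le \liminf_n \sigma_{x_n}((-L,L)) \le C/(1-\delta)$. Sending $\delta \downarrow 0$ and then $L \to \infty$ produces $\sigma(\bbR) \le C$, as desired.

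The main obstacle is this middle inequality: since the factor $\lvert \widehat g_\epsilon \rvert^2 = \operatorname{sinc}^2(\epsilon k/2)$ is not monotone in $\epsilon$, one cannot take $\epsilon \downarrow 0$ inside $\int \lvert \widehat g_\epsilon \rvert^2\,d\sigma_x$ before averaging in $x$. The diagonal extraction with $\epsilon_n \downarrow 0$, combined with uniform convergence of $\lvert \widehat g_\epsilon \rvert^2$ to $1$ on compact sets, is what transfers asymptotic control at the sinc-squared level into control of $\sigma((-L,L))$; working with weak-$*$ limits in $\cM(\overline{\bbR})$ instead of vague limits on $\bbR$ ensures that a subsequential limit exists and handles cleanly any escape of mass to $\infty$ (which only contributes to $\tilde\sigma(\{\infty\})$ and does not inflate $\sigma(\bbR)$).
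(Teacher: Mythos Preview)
Your proof is correct. The paper organizes the argument differently: after noting $c_\epsilon = \inf_{\sigma\in\cS(\varphi)} \int \lvert\widehat g_\epsilon\rvert^2\,d\sigma$, it obtains $\sup_\epsilon c_\epsilon \le \inf_\sigma \sigma(\bbR)$ from the min--max inequality together with \eqref{30aug3}, and for the reverse direction it shows $\limsup_{\epsilon\downarrow 0} c_\epsilon \ge (1-L^{-1})\inf_{\sigma}\sigma((-L,L))$ for each $L$ and then invokes Lemma~\ref{lemma4sep} (whose proof is itself a diagonal extraction inside $\cS(\varphi)$). You instead fold that diagonal step directly into the argument by extracting a witness $\sigma$ along a sequence in which both $\epsilon_n\downarrow 0$ and $x_n\to\infty$, bypassing Lemma~\ref{lemma4sep}. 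The underlying ingredients---uniform convergence of $\lvert\widehat g_\epsilon\rvert^2$ to $1$ on compacts and Portmanteau lower semicontinuity on open sets---are identical; the paper's route is more modular, while yours is self-contained.
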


\begin{proof}
Begin by noting that, since $\lvert \widehat g_\epsilon \rvert^2 \in C_0(\bbR)$,
\[
 \liminf_{x \to\infty}  \int \lvert \widehat g_\epsilon \rvert^2\,d\sigma_{x} = \inf_{\sigma \in\cS(\varphi)} \int \lvert \widehat g_\epsilon \rvert^2 \,d\sigma,
 \]
which allows us to rephrase \eqref{30aug1}. The min-max inequality implies that
 \[
   \sup_{\epsilon >  0} \inf_{\sigma \in\cS(\varphi)} \int \lvert \widehat g_\epsilon \rvert^2 \,d\sigma \le   \inf_{\sigma \in\cS(\varphi)}  \sup_{\epsilon > 0}  \int \lvert \widehat g_\epsilon \rvert^2 \,d\sigma = \inf_{\sigma \in \cS(\varphi)} \sigma(\bbR),
\]
and it is obvious that
\[
 \limsup_{\epsilon \downarrow 0} \inf_{\sigma \in\cS(\varphi)} \int \lvert \widehat g_\epsilon \rvert^2 \,d\sigma \le  \sup_{\epsilon > 0}  \inf_{\sigma \in\cS(\varphi)} \int \lvert \widehat g_\epsilon \rvert^2 \,d\sigma.
 \]
It remains to prove
\begin{equation}\label{30aug7}
\inf_{\sigma \in \cS(\varphi)} \sigma(\bbR) \le \limsup_{\epsilon \downarrow 0} \inf_{\sigma \in\cS(\varphi)} \int \lvert \widehat g_\epsilon \rvert^2 \,d\sigma.
\end{equation}
To prove this, note that $\lvert \widehat g_\epsilon \rvert^2$ converges to $1$ uniformly on compacts as $\epsilon \downarrow 0$. Thus, for any $L > 1$, there exists $\delta > 0$ such that for all $\epsilon \in (0,\delta)$, $\lvert \widehat g_\epsilon \rvert^2 \ge (1-L^{-1}) \chi_{(-L,L)}$. Thus,
\[
\limsup_{\epsilon \downarrow 0}  \inf_{\sigma \in\cS(\varphi)} \int \lvert \widehat g_\epsilon \rvert^2 \,d\sigma \ge (1-L^{-1}) \inf_{\sigma \in \cS(\varphi)} \sigma((-L,L)).
\]
Since this holds for arbitrary $L$, Lemma~\ref{lemma4sep} implies \eqref{30aug7}.
\end{proof}

The part of Theorem~\ref{thmlocalaverages} which characterizes regularity will use the following lemma:

\begin{lemma} \label{lemmasupsigma}
\[
\sup_{\sigma \in \cS(\varphi)} \sigma(\bbR)  = \sup_{\epsilon > 0} \limsup_{x \to \infty}  \int \lvert \widehat g_\epsilon \rvert^2\,d\sigma_{x}.
\]
\end{lemma}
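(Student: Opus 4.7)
The plan is to establish both inequalities by combining \eqref{30aug3} (which relates $\sigma(\bbR)$ to $\sup_\epsilon \int |\widehat g_\epsilon|^2 \,d\sigma$) with the vague convergence of subsequences of $\{\sigma_x\}$. The key observation enabling both directions is that $|\widehat g_\epsilon|^2 \in C_0(\bbR)$: from the explicit formula $|\widehat g_\epsilon(k)|^2 = 4\sin^2(\epsilon k/2)/(\epsilon k)^2$, this function is continuous and vanishes as $|k|\to\infty$, so it is a valid test function against vague limits.

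For the inequality $\le$, I would fix any $\sigma \in \cS(\varphi)$ and any $\epsilon>0$, and choose $x_j \to \infty$ with $\sigma_{x_j} \to \sigma$ vaguely. Since $|\widehat g_\epsilon|^2 \in C_0(\bbR)$, vague convergence gives
\[
\int |\widehat g_\epsilon|^2 \,d\sigma = \lim_{j\to\infty} \int |\widehat g_\epsilon|^2 \,d\sigma_{x_j} \le \limsup_{x\to\infty} \int |\widehat g_\epsilon|^2 \,d\sigma_x.
\]
Taking the supremum over $\epsilon>0$ and applying \eqref{30aug3} yields $\sigma(\bbR) \le \sup_{\epsilon>0} \limsup_{x\to\infty} \int |\widehat g_\epsilon|^2 \,d\sigma_x$, and then taking supremum over $\sigma \in \cS(\varphi)$ finishes this direction.

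For the inequality $\ge$, I would fix $\epsilon > 0$ and pick a sequence $x_j \to \infty$ realizing the $\limsup$, i.e.\ with $\int |\widehat g_\epsilon|^2 \,d\sigma_{x_j}$ converging to $\limsup_{x\to\infty} \int |\widehat g_\epsilon|^2 \,d\sigma_x$. By weak-$*$ precompactness in $\cM(\overline{\bbR})$ (available because $\vertiii{\varphi}_2 < \infty$ gives a uniform bound on $\sigma_x(\overline\bbR)$ for $x\ge 1$), I may pass to a subsequence so that $\sigma_{x_j}$ converges weak-$*$ to some $\sigma' \in \cM(\overline\bbR)$; by Lemma~\ref{lemmameasurerestriction}, the restriction $\sigma = \sigma'|_{\bbR}$ lies in $\cS(\varphi)$, and on $\bbR$ we also have $\sigma_{x_j} \to \sigma$ vaguely. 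Then, again because $|\widehat g_\epsilon|^2 \in C_0(\bbR)$,
\[
\limsup_{x\to\infty} \int |\widehat g_\epsilon|^2 \,d\sigma_x = \lim_{j\to\infty}\int |\widehat g_\epsilon|^2 \,d\sigma_{x_j} = \int |\widehat g_\epsilon|^2 \,d\sigma \le \sigma(\bbR) \le \sup_{\tau \in \cS(\varphi)} \tau(\bbR),
\]
where the penultimate inequality uses \eqref{30aug3}. Taking supremum over $\epsilon > 0$ completes the proof.

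No serious obstacles arise here because, unlike Lemma~\ref{lemma181}, both sides have an outer supremum and no min--max exchange is needed; the whole argument is a routine application of vague convergence together with the identity \eqref{30aug3}. The only point that warrants a line of justification is that $|\widehat g_\epsilon|^2$ vanishes at infinity, which makes it a legitimate test function against vague limits.
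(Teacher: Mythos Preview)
Your proof is correct and matches the paper's approach: the paper compresses both inequalities into the single observation that $\limsup_{x\to\infty} \int \lvert \widehat g_\epsilon \rvert^2\,d\sigma_x = \sup_{\sigma \in \cS(\varphi)} \int \lvert \widehat g_\epsilon \rvert^2\,d\sigma$ for each fixed $\epsilon$ (using that $\lvert \widehat g_\epsilon \rvert^2 \in C_0(\bbR)$ and precompactness), then exchanges the two supremums and applies \eqref{30aug3}. Your two-inequality version unpacks exactly this argument.
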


\begin{proof}
By definition of $\sigma_x$ and because $\lvert \widehat g_\epsilon \rvert^2 \in C_0(\bbR)$, for every $\epsilon > 0$,
\[
 \limsup_{x\to\infty} \int \lvert \widehat g_\epsilon \rvert^2\,d\sigma_{x} = \sup_{\sigma\in \cS(\varphi)}  \int \lvert \widehat g_\epsilon \rvert^2\,d\sigma.
\]
Thus,
\[
\sup_{\epsilon > 0} \limsup_{x \to \infty} \int \lvert \widehat g_\epsilon \rvert^2\,d\sigma_{x}  = \sup_{\epsilon > 0}  \sup_{\sigma\in \cS(\varphi)}  \int \lvert \widehat g_\epsilon \rvert^2\,d\sigma.
\]
Exchanging the supremums and using \eqref{30aug3} completes the proof.
\end{proof}

\section{Subsequential limits, revisited} \label{sectionLimits2}

We can now replace Theorem~\ref{thmharmonicsubseqlimit1} with a more precise statement. The new statement requires an additional convergence assumption on the subsequence, but it finds the precise two-term asymptotics for the limit. 

\begin{theorem} \label{thmharmonicsubseqlimit2}
Let $x_k \to \infty$ be a sequence such that $h_k = h(x_k, \cdot)$ converge in  $\cD'(\bbC)$ and $\sigma_k = \sigma_{x_k}$ converge vaguely. Then $h= \lim_{k\to\infty} h_k$ defines a positive harmonic function in $\Omega$ and $h$ has the normal asymptotic behavior
\begin{equation}\label{14jun2}
h(iy) = y + \frac {\sigma(\bbR)}{2y} +o(1/y), \quad y\to\infty
\end{equation}
where $\sigma$ is the vague limit of $\sigma_{x_k}$.
\end{theorem}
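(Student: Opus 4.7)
The plan is to build on Theorem~\ref{thmharmonicsubseqlimit1}, which already establishes that $h = \lim h_k$ is positive harmonic on $\Omega$ and admits asymptotics of the form $h(iy) = y + a/(2y) + o(1/y)$ for some constant $a \ge 0$. What remains is to identify $a = \sigma(\bbR)$ using the vague convergence hypothesis $\sigma_{x_k} \to \sigma$. The key new ingredient is to recognize the quadratic expression $\tfrac{1}{x}\int_0^x \overline{\varphi(t)}(w_z * \varphi)(t)\,dt$ appearing in Theorem~\ref{dirichlet.exp.growth} as an integral against $\sigma_x$ via Plancherel's theorem.

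Concretely, first I would extend $\varphi$ by zero to the negative half-line. Since $w_z$ is supported on $[-1,0]$, the boundary differences between $\chi_{[0,x]}(w_z * \varphi)$ and $w_z * (\chi_{[0,x]}\varphi)$ are supported near $0$ and $x$ and are uniformly bounded, contributing only $O(1/x)$. Parseval's identity then yields
\[
\frac{1}{x}\int_0^x \overline{\varphi(t)}(w_z * \varphi)(t)\,dt = \int F_z(k)\,d\sigma_x(k) + O(1/x),
\]
where $F_z(k) = \sqrt{2\pi}\,\widehat{w_z}(k)$ lies in $C_0(\bbR)$. A direct computation gives $F_{iy}(k) = -2y(1 - e^{-2y+ik})/(2y - ik)$, which satisfies $|F_{iy}(k)| \le 2$ uniformly and $F_{iy}(k) \to -1$ pointwise as $y \to \infty$.

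Combining this Plancherel identity with Theorem~\ref{dirichlet.exp.growth}, and using Lemma~\ref{pass.to.new.h} to pass from $u_1$ to $u_1 - u_2$, I obtain along the subsequence the expansion
\[
h_k(iy) = y - \frac{1}{2y}\Re\int F_{iy}\,d\sigma_{x_k} + O(y^{-2}) + o_k(1),
\]
valid for $y$ sufficiently large, where the $o_k(1)$ absorbs the convergence $h_k \to h$ on compacts of $\Omega$ together with the $O(1/x_k)$ boundary correction. Since $F_{iy} \in C_0(\bbR)$, vague convergence gives $\int F_{iy}\,d\sigma_{x_k} \to \int F_{iy}\,d\sigma$, and passing to the limit yields $h(iy) = y - \tfrac{1}{2y}\Re\int F_{iy}\,d\sigma + O(y^{-2})$ for all sufficiently large $y$. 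Finally, letting $y \to \infty$, dominated convergence (using $|F_{iy}| \le 2$ and the finiteness of $\sigma$) gives $\Re\int F_{iy}\,d\sigma \to -\sigma(\bbR)$, producing the desired $h(iy) = y + \sigma(\bbR)/(2y) + o(1/y)$.

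The main delicate point is the interplay of the two limits $k \to \infty$ and $y \to \infty$: the error $O(|z|^{-2})$ in Theorem~\ref{dirichlet.exp.growth} is only a $\limsup_x$ statement, so its constant must be tracked to depend only on $|z|$ and $\vertiii{\varphi}_2$ rather than on $x$. The correct order is to fix $y$ large first, then choose $k$ large (depending on $y$) to realize the $h_k \to h$ convergence and the vague convergence $\int F_{iy}\,d\sigma_{x_k} \to \int F_{iy}\,d\sigma$ simultaneously, and only then send $y \to \infty$ using dominated convergence.
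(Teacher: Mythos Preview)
Your proposal is correct and follows essentially the same route as the paper: rewrite $\tfrac{1}{x_k}\int_0^{x_k}\overline{\varphi}(w_z*\varphi)$ via Plancherel as $\sqrt{2\pi}\int\widehat{w_z}\,d\sigma_{x_k}$ (up to an $O(1/x_k)$ boundary correction), use $\widehat{w_z}\in C_0(\bbR)$ and vague convergence to pass to $\int\widehat{w_z}\,d\sigma$, and then apply dominated convergence as $y\to\infty$. Your explicit discussion of the order of limits is a nice touch; the paper handles this more implicitly by returning to the decomposition \eqref{hj.form} from the proof of Theorem~\ref{thmharmonicsubseqlimit1}.
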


\begin{proof}
The proof is similar to that of Theorem~\ref{thmharmonicsubseqlimit1}, where we only want to improve the estimate on the second term in the form for $h_k(iy)$. Beginning again with \eqref{hj.form}, instead of using the bound $\|w_{iy}*\varphi\|_2\leq \|\varphi\|_2$, we use $d\sigma_{x_k}$ to rewrite
\begin{align*}
\frac{1}{x_k} \int_0^{x_k} \bar{\varphi} (w	_z * ( \varphi \chi_{[0,x_k]})) & = \frac{1}{x_k}\langle \varphi\chi_{[0,x_k]}, w_z * (\varphi\chi_{[0,x_k]}) \rangle \\
& = \frac{{\sqrt{2\pi}}}{x_k} \langle \widehat{(\varphi\chi_{[0,x_k]})} , \widehat{w_z} \widehat{(\varphi\chi_{[0,x_k]})} \rangle = \sqrt{2\pi} \int \widehat{w_z} d\sigma_{x_k},
\end{align*}
and notice that for each $z \in \C_+$ the difference 
\[
	\frac 1{x_k} \int_0^{x_k} \overline{\varphi(t)} ((w_z * \varphi)(t) - (w_z * \varphi \chi_{[0,x_k]}))dt = \frac{2iz}{x_k} \int_{x_k-1}^{x_k} \overline{\varphi(t)} \int_{x_k-1}^{t} e^{-2iz(t-s)} \varphi(s) dsdt 
\]
goes to zero as $k \to \infty$. Consequently, we write $\frac{1}{x_k} \int_0^{x_k} \bar{\varphi} (w_z * \varphi) = \sqrt{2\pi}  \int \widehat w_z \,d\sigma_{x_k} + O(x_k\inv)$
as $k \to \infty$. Then, we note that $\widehat{w}_z$ has the form
\[
	\widehat w_z (s) = \frac {2iz}{\sqrt{2\pi}} \int_{-1}^0 e^{-2izt} e^{-its} dt =  \frac {2iz}{\sqrt{2\pi}}  \frac{ 1 - e^{i(2z+s)}}{-i(2z+s)} = - \frac 1{\sqrt{2\pi}}  \frac{ 1 - e^{i(2z+s)}}{1 + s/(2z)},
\]
so $\widehat w_z \in C_0(\bbR)$. By vague convergence of $d\sigma_{x_k}$, for any $z\in \bbC_+$,
\[
\frac 1{x_k} \int_0^{x_k} \overline{\varphi(t)} (w_z * \varphi)(t) dt = \sqrt{2\pi}  \int \widehat w_z \,d\sigma_{x_k} + O(x_k\inv) \to  \sqrt{2\pi}  \int \widehat w_z \,d\sigma
\]
as $k\to\infty$. Then, by dominated convergence we have
\[
\lim_{y\to\infty} 2y  \frac{\sqrt{2\pi} \int \widehat w_{iy} \,d\sigma}{2y} = \int 1 d\sigma = \sigma(\bbR),
\]
which concludes the proof.
\end{proof}

\begin{proof}[Proof of Theorem~\ref{thmunivineq2}]
This now follows from the nontangential behavior of $M_\E$, the fact that $h \geq M_\E$ on $\Omega$, and Theorem \ref{thmharmonicsubseqlimit2}.
\end{proof}

Now we are able to prove Theorem~\ref{RegularityTFAE} similarly to its Schr\"odinger analog \cite[Theorem 1.5]{EL}; one important difference is the appearance of the measures $\sigma_x$ and their vague limits, which enter through the expansion \eqref{14jun2}.

\begin{proof}[Proof of Theorem~\ref{RegularityTFAE}]
By inclusions, we have $(v) \implies (iv)$ and $(vi) \implies (iv)$.

$(iv) \implies (vi)$: Consider any sequence $x_k \to \infty$ such that $h=\lim_{k\to\infty} h(x_k,\cdot)$ converges. By Theorem~\ref{thmharmonicsubseqlimit1} and Lemma~\ref{AL.criterion}, the limit obeys $h \ge M_\E$ on $\Omega$. By assumption the opposite inequality holds at some point in $\bbC_+$. By the maximum principle, $h = M_\E$. Thus, $M_\E$ is the only possible subsequential limit. By precompactness with respect to the topology of uniform convergence on compact subsets of $\bbC_\pm$, $h(x,\cdot)$ converges to $M_\E$.

$(vi) \implies (v)$: Denote by $f^\vee$ the upper semicontinuous regularization of $f$. For any $z \in\bbR$ and any sequence $x_n \to \infty$, using $f \le f^\vee$ and the upper envelope theorem \cite[Theorem 2.7.4.1]{Azarin09},
\[
\limsup_{n \to \infty}h(x_n,z) \leq (\limsup_{n \to \infty}h(x_n,z))^\vee = M_\E(z).
\]
 
 $(v) \implies (ii)$: This follows from Theorem~\ref{lem:ConePositive}.

$(ii) \implies (iii)$: The set of Dirichlet irregular points is polar, so is of harmonic measure zero \cite{GarHarmonicMeasure}.

$(iii) \implies (vi)$: Consider a subsequential limit $h(x_n,\cdot) \to h$ for some $x_n \to \infty$. By the upper envelope theorem \cite[Theorem 2.7.4.1]{Azarin09}, there is a polar set $X_1$ such that $\limsup_{n\to\infty} h(x_n,z) = h(z)$ for all $z \in \bbC \setminus X_1$. Meanwhile, by (iii), there is a set $X_2$ of harmonic measure zero such that for all $t \in \E \setminus (X_1 \cup X_2)$ by upper semicontinuity
\[
0 \le \liminf_{\substack{z \to t \\ z \in \Omega}} h(z) \le  \limsup_{\substack{z \to t \\ z \in \Omega}} h(z) \le h(t) \le 0.
\]
Since $X_1 \cup X_2$ has zero harmonic measure, Theorem~\ref{lem:ConePositive} and the leading order asymptotics implies $h = M_\E$. Thus, $M_\E$ is the only subsequential limit and the claim follows by precompactness.

$(vi) \implies (i)$: For any $\sigma \in \cS(\varphi)$, take $x_k \to\infty$ such that $\sigma_{x_k} \to \sigma$. Since $h(x_k, \cdot) \to M_\E$, Theorem~\ref{thmharmonicsubseqlimit2} gives a two-term expansion for $M_\E$; comparing with \eqref{MartinFunc2termexp} implies $\sigma(\bbR) = b_\E$.
 
$(i) \implies (vi)$: Consider a subsequential limit $h(x_k, \cdot) \to h$.  By Theorem \ref{thm11}, the difference $v(z) = h(z) - M_\E(z)$ is  a positive harmonic function on $\Omega$.  By precompactness we can assume $\sigma_{x_k} \to \sigma$ for some $\sigma \in \cS(\varphi)$. Then Theorem \ref{thmharmonicsubseqlimit2} and $\sigma(\bbR) = b_\E$ imply that $v(iy) = o(y\inv)$ as $y \to \infty$. By the proof of Lemma \ref{expansion.for.linear.h}, this implies that $v\equiv 0$, so that $M_\E$ is the only subsequential limit of $h(x,\cdot)$ as $x\to \infty$. By precompactness of the family $\{h(x,\cdot)\}_{x \in [1,\infty)}$, $(vi)$ follows. 
\end{proof}

\begin{proof}[Proof of Theorem~\ref{thmlocalaverages}]
The first claim is a restatement of Theorem~\ref{thmunivineq2}, by Lemma~\ref{lemma181}. The second claim follows from Lemma~\ref{lemmasupsigma}, since regularity is equivalent to $b_\E \le \sup_{\sigma \in \cS(\varphi)} \sigma(\bbR)$. 
\end{proof}

\begin{proof}[Proof of Corollary~\ref{corollarysigmaR}]
By Theorem~\ref{thmlocalaverages}, \eqref{22nov5} implies $b_\E \le 0$, so $b_\E = 0$ and $\E = \bbR$.
\end{proof}

\begin{proof}[Proof of Corollary~\ref{corCesaroNevai}]
This is a special case of Theorem~\ref{thmlocalaverages}, since $b_\E \ge 0$ with equality if and only if $\E = \bbR$.
\end{proof}

\begin{proof}[Proof of Example~\ref{xmpl11}]
It is a calculus fact that the improper integral
\[
\int_0^\infty e^{ix^\alpha} dx = \lim_{L \to \infty} \int_0^L e^{ix^\alpha} dx
\]
is (conditionally) convergent. Subtracting values for $L= t+ \epsilon$ and $L = t$ shows that for any $\epsilon > 0$,
\[
\lim_{t\to\infty} \frac 1{\epsilon} \int_{t}^{t+\epsilon}  \varphi(s)\,ds = 0.
\]
Thus, \eqref{23nov1} holds so by  Corollary~\ref{corCesaroNevai}, $\Lambda_\varphi$ is regular and  $\E = \bbR$; however, $\frac 1x \int_0^x \lvert \varphi(t)\rvert^2 \,dt \to 1$ as $x \to\infty$.
\end{proof}
 
\begin{proof}[Proof of Example~\ref{xmpl12}]
This follows from  Corollary~\ref{corCesaroNevai} similarly to the previous example; however, now $\frac 1x \int_0^x \lvert \varphi(t)\rvert^2 \,dt$ has the set of accumulation points $[\frac 1{q+1}, \frac{q}{q+1}]$.
\end{proof}

\begin{proof}[Proof of Corollary~\ref{corL2precompact}]
The key condition is
\begin{equation}\label{23nov3}
\forall \epsilon > 0 \quad \exists r < \infty \quad \limsup_{x \to \infty} \sigma_x( \bbR \setminus [-r,r]) \le \epsilon.
\end{equation}
Let us first show how the claim follows from this condition. In the conventions of Section~\ref{sectionvague}, it follows that for any subsequential limit $\sigma$, $\sigma(\{\infty\}) \le \epsilon$ for all $\epsilon$, so $\sigma(\{\infty\})=0$. Thus, for any subsequential limit, $\sigma(\bbR) = \lim_{k\to\infty} \sigma_{x_k}(\bbR)$. Thus,
\[
\sup_{\sigma\in \cS(\varphi)} \sigma(\bbR) = \limsup_{x\to\infty} \frac 1x \int_0^x \lvert \varphi(t) \rvert^2 \,dt.
\]
Due to Theorem~\ref{thmunivineq2}, this implies that regularity is equivalent to \eqref{1dec1}.

It remains to prove \eqref{23nov3}. We first note that, if $f_n \to f$ in $L^2((0,1))$ and $t_n \to t$ in $[0,1]$, then $f_n \chi_{(0,t_n)} \to f \chi_{(0,t)}$ in $L^2((0,1))$ because
\[
\lVert f_n \chi_{(0,t_n)} - f \chi_{(0,t)} \rVert \le \lVert (f_n - f) \chi_{(0,t_n)} \rVert + \lVert  ( \chi_{(0,t_n)} - \chi_{(0,t)}) f \rVert.
\]
Thus, precompactness of the family $\{ \varphi(\cdot - x) \vert_{(0,1)} \mid x \ge 0 \}$ implies precompactness of the family
\[
\cP = \{ \varphi(\cdot - x) \chi_{(0,t)} \mid x \ge 0, t \in [0,1] \}
\]
Let us view $\cP$ as a subset of $L^2(\bbR)$ with the standard isometric inclusion of $L^2((0,1))$ into $L^2(\bbR)$. By the Kolmogorov--Riesz--Tamarkin theorem \cite{HHM}, this implies the $L^2$-equicontinuity condition
\[
\lim_{u \to 0} \sup_{x \ge 0} \int  \lvert (\varphi \chi_{(x,x+1)} ) (t) -  (\varphi \chi_{(x,x+1)} ) (t+u) \rvert^2 dt = 0.
\]
By decomposing the interval $(0,x)$ into $\lceil x \rceil$ intervals of length at most $1$, we obtain $\lim_{u \to 0} \omega(u) = 0$ where
\[
\omega(u) = \sup_{x \ge 1} \frac 1x \int   \lvert  (\varphi \chi_{(0,x)} ) (t) -  (\varphi \chi_{(0,x)} ) (t+u) \rvert^2 dt.
\]
By adapting an argument of \cite{Pego} to averages in $x$ instead of a single equicontinuous family, we will obtain a conclusion for Fourier transforms. Denote $\psi(t) = \frac 1{\sqrt{2\pi}} 
e^{-t^2/2}$ and $\psi_R(t) = R \psi(Rt)$. For $\lvert k \rvert \ge 2R$, $1 - \sqrt{2\pi} \hat \psi_R(k) \ge 1/2$ so
\[
\int_{\bbR \setminus [-2R,2R]}  \lvert \widehat{ (\varphi \chi_{(0,x)} ) }(k) \rvert^2 dk \le 4 \lVert (1 - \sqrt{2\pi} \hat \psi_R) \widehat{ (\varphi \chi_{(0,x)} ) } \rVert_2^2 = 4 \lVert \varphi \chi_{(0,x)} - \psi_R * (\varphi \chi_{(0,x)}) \rVert_2^2.
\]
This rewrites as
\[
\int_{\bbR \setminus [-2R,2R]}  \lvert \widehat{ (\varphi \chi_{(0,x)} ) }(k) \rvert^2 dk \le 4 \int \left\lvert \int ((\varphi \chi_{(0,x)})(t) -  (\varphi \chi_{(0,x)})(t+u) ) \psi_R(u) \,du \right\rvert^2 dt.
\]
Since $\psi_R(u) du$ is a probability measure, Jensen's inequality implies
\[
\int_{\bbR \setminus [-2R,2R]}  \lvert \widehat{ (\varphi \chi_{(0,x)} ) }(k) \rvert^2 dk \le 4 \iint \left\lvert (\varphi \chi_{(0,x)})(t) -  (\varphi \chi_{(0,x)})(t+u) \right\rvert^2  \psi_R(u)\, du\,dt.
\]
By Fubini's theorem and the substitution $u=v/R$, this implies
\begin{equation}\label{6dec1}
\frac 1x \int_{\bbR \setminus [-2R,2R]}  \lvert \widehat{ (\varphi \chi_{(0,x)} ) }(k) \rvert^2 dk \le 4 \int \omega(v/R) \psi(v)\, dv.
\end{equation}
Since $\omega(v/R) \to 0$ as $R \to \infty$ and $\lvert \omega(u) \rvert \le 4 \vertiii{\varphi}_2^2$ for all $u$, by dominated convergence, the right-hand side of  \eqref{6dec1} goes to $0$ as $R \to\infty$. Thus, so does the left-hand side, which implies \eqref{23nov3}.
\end{proof}

\begin{proof}[Proof of Theorem~\ref{thm114}]
(a): By Theorem~ \ref{RegularityTFAE} and \ref{RegularityTFAE}, $h(x,z)\to M_\E(z)$ in $\cD'(\bbC)$.
On the other hand, by  definition of $\rho_x$ we have 
\begin{align*}
\frac{1}{2\pi}\Delta h(x,\cdot)=\rho_x.
\end{align*}
Thus, for any $\phi\in C_c^\infty(\bbC)$,
\begin{align*}
\lim\limits_{x\to\infty}\int\phi(t)d\rho_x(t)=\frac{1}{2\pi}\lim\limits_{x\to\infty}\int h(x,t)\Delta \phi(t)d\lambda(t)=\frac{1}{2\pi}\int M_\E(t)\Delta \phi(t)d\lambda(t)=\int \phi(t)d\rho_\E(t).
\end{align*}

(b): Assume that $h(x_n,\cdot)$ do not converge to $M_\E$ and consider a subsequence $x_{n_j}$ such that $h(x_{n_j},\cdot)\to h$ in $\cD'(\C)$ with some limit $h$ not equal to $M_\E$.
By the upper envelope theorem \cite[Theorem 2.7.4.1]{Azarin09} there is a polar set $X_1$ such that for any $z\in \C\setminus X_1$,
\begin{align*}
\limsup\limits_{j\to\infty}h(x_{n_j},z)=h(z).
\end{align*}
The subharmonic function $h$ has some Riesz measure $\rho$ and by the same arguments as in (a), $\rho_{x_{n_j}}$ converges to $\rho$ in the topology dual to $C_c^\infty(\bbR)$.  Hence, by uniqueness of the limits our assumption implies that $\rho=\rho_\E$ and, by Lemma~\ref{lem:positveSymSubharmonic} applied to $h$, there are $d_1,d_2$ so that 
\begin{align*}
h(z)=\int_{-1}^{1}\log|t-x|d\rho_\E(t)+\int_{1\leq|t|}\left(\log\left|1-\frac{z}{t}\right|+\frac{\Re z}{t}\right)d\rho_\E(t)+d_1+d_2\Re z.
\end{align*}
Let $c_1,c_2$ denote the constants appearing the the corresponding representation \eqref{eq:HadamardMartin} for $M_\E$. Since for $\delta<\t<\pi-\delta$
\begin{align*}
0=\lim_{r\to\infty}\frac{M_\E(re^{i\t})-h(re^{i\t})}{r}=\cos(\t)(c_2-d_2),
\end{align*}
we obtain that $d_2=c_2$. Thus, since $M_\E\leq h$ and since we assumed that $h$ is not equal to $M_\E$, we have 
\begin{align*}
0<d=h(z)-M_\E(z).
\end{align*}
Since $M_\E$ has a unique subharmonic extension to $\C$ which vanishes q.e. on $\E$, there is a polar set $X_2$ such that $h(z)=d$ for $z\in\E\setminus X_2$.  In particular,
\begin{align*}
\limsup\limits_{j\to\infty}h(x_{n_j},z)=d>0, \qquad \forall z\in\E\setminus (X_1\cup X_2).
\end{align*} 
However, by Lemma \ref{lemmaSchnol}, for $\mu$-a.e. $z\in \E$, the Dirichlet solution grows at most polynomially and, in particular,
\[
\limsup\limits_{j\to\infty}h(x_{n_j},z)\leq 0.
\]
Thus $\mu(\E\setminus (X_1\cup X_2)) = 0$, which implies the claim with $X = X_1 \cup X_2$.
\end{proof}

With the ingredients developed above, the following proof is analogous to the proof of \cite[Theorem 1.16]{EL}:

\begin{proof}[Proof of Theorem~\ref{IshiiPastur}]
By ergodicity, for a.e. $\eta \in S$, $\lim_{x\to\infty} h(x,z) = \gamma(z)$ pointwise in $\bbC_+ \cup \bbC_-$. Since $\gamma$ is subharmonic in $\bbC$, by the weak identity principle, convergence $h(x,\cdot) \to \gamma$ is also in $\cD'(\bbC)$. By the upper envelope theorem, there is a polar set $X_\eta$ such that for any $z \in \bbC \setminus X_\eta$,
\[
\limsup_{n\to\infty} \frac 1n h(n,z) = \gamma(z).
\]
By Lemma \ref{lemmaSchnol}, $\limsup_{n\to\infty} \frac 1n h(n,z) \le 0$ for $\mu_\eta$-a.e. $z$, si $\mu_\eta(Q \setminus X_\eta) = 0$.
\end{proof}

\begin{proof}[Proof of Theorem~\ref{theoremWidomcriterion}]
By Lemma~\ref{lemmaSchnol}, for $\mu$-a.e.\ $z \in \E$, $\limsup_{x\to \infty} h(x,z) \le 0$. By absolute continuity with respect to $\mu$, this also holds for a.e. $z\in \E$ with respect to harmonic measure. Thus, $\Lambda_\varphi$ is regular by Theorem~\ref{RegularityTFAE}.
\end{proof}

We conclude with an observation: for Schr\"odinger operators, adding a constant to the potential shifts both the average of the potential and the renormalized Robin constant by the same amount. For Dirac operators, there is also a translation invariance of the theory. The substitution $\varphi_1(x) = e^{2icx} \varphi(x)$ shifts the potential by $c$, because $\cU \Lambda_{\varphi} \cU^{-1} = \Lambda_{\varphi_1} + c I$, with the unitary conjugation by
\[
\cU =  \begin{pmatrix} e^{icx} & 0 \\ 0 & e^{-icx}\end{pmatrix}.
\]
This conjugation acts by a translation on the measures $\sigma_x$ and doesn't change any of the averages appearing in the introduction. Likewise, it follows from  \eqref{MartinFunc2termexp} that $b_\E$ is translation-invariant, i.e., $b_{\E + c} = b_\E$ for $c \in \bbR$.

\bibliographystyle{amsplain}

\begin{thebibliography}{10}
	
	\bibitem{AkhieLevin}
	N.~I.~Akhiezer and B.~Ya.~Levin, \emph{Generalizations of the Bernstein inequalities for derivatives of entire functions}, Studies in the Modern Problems of the Theory of Functions of a Complex Variable (Moscow) (1960), 111--165. 
	
	
	\bibitem{Anc79}
	A.~Ancona, \emph{Une propri\'{e}t\'{e} de la compactification de {M}artin
		d'un domaine euclidien}, Ann. Inst. Fourier (Grenoble) \textbf{29} (1979),
	no.~4, ix, 71--90. \MR{558589}
	
	\bibitem{ArmGar01}
	D.~H. Armitage and S.~J. Gardiner, \emph{Classical potential theory}, Springer
	Monographs in Mathematics, Springer-Verlag London, Ltd., London, 2001.
	\MR{1801253}

	\bibitem{Azarin09}
	V.~Azarin, \emph{Growth theory of subharmonic functions}, Birkh\"{a}user
	Advanced Texts: Basler Lehrb\"{u}cher. [Birkh\"{a}user Advanced Texts: Basel
	Textbooks], Birkh\"{a}user Verlag, Basel, 2009. \MR{2463743}
	
	\bibitem{Be80}
	M.~Benedicks, \emph{Positive harmonic functions vanishing on the boundary of
		certain domains in {${\bf R}^{n}$}}, Ark. Mat. \textbf{18} (1980), no.~1,
	53--72. \MR{608327}

\bibitem{BLY1} R. Bessonov, M.~Luki\'c, P. Yuditskii, \emph{Reflectionless canonical systems, I. Arov gauge and right limits}, arXiv:2011.05261.
	
\bibitem{BLY2} R. Bessonov, M.~Luki\'c, P. Yuditskii, \emph{Reflectionless canonical systems, II. Almost periodicity and character-automorphic Fourier transforms}, arXiv:2011.05266.
	

\bibitem{BS01} A.~Borichev, M.~Sodin, \emph{Krein's entire functions and the Bernstein approximation problem}, Illinois J. Math. 45 (2001), no. 1, 167--185. 

%
	\bibitem{CarTot04}
	L.~Carleson and V.~Totik, \emph{H\"{o}lder continuity of {G}reen's functions},
	Acta Sci. Math. (Szeged) \textbf{70} (2004), no.~3-4, 557--608. \MR{2107529}
%
\bibitem{ClaGes02} S.~Clark and F.~Gesztesy,
     \emph{Weyl-{T}itchmarsh {$M$}-function asymptotics, local uniqueness
              results, trace formulas, and {B}org-type theorems for {D}irac
              operators}, Trans. Amer. Math. Soc. \textbf{354} (2002), no.~9, 3475--3534.

	\bibitem{DFL17}
	D.~Damanik, J.~Fillman, and M.~Lukic, \emph{Limit-periodic continuum
		{S}chr\"{o}dinger operators with zero measure {C}antor spectrum}, J. Spectr.
	Theory \textbf{7} (2017), no.~4, 1101--1118. \MR{3737889}
	
	\bibitem{DFG19}
	D.~Damanik, J.~Fillman, and A.~Gorodetski, \emph{Multidimensional	
			almost-periodic {S}chr\"{o}dinger operators with {C}antor spectrum}, Ann.
	Henri Poincar\'{e} \textbf{20} (2019), no.~4, 1393--1402. \MR{3928381}
	
\bibitem{DPV} E.~Di Nezza, G.~Palatucci, E.~Valdinoci, \emph{Hitchhiker's guide to the fractional Sobolev spaces}, Bull. Sci. Math. 136 (2012), no. 5, 521--573. 

\bibitem{EL}  B. Eichinger, M. Luki\'c, \emph{Stahl--Totik regularity for continuum Schr\"odinger operators}. arXiv:2001.00875.

	\bibitem{GardSjoed09}
	S.~J. Gardiner and T.~Sj\"{o}din, \emph{Potential theory in {D}enjoy domains},
	Analysis and mathematical physics, Trends Math., Birkh\"{a}user, Basel, 2009,
	pp.~143--166. \MR{2724611}
%
	\bibitem{GarHarmonicMeasure}
	J.~B. Garnett and D.~E. Marshall, \emph{{Harmonic measure}}, New Mathematical
	Monographs, vol.~2, Cambridge University Press, Cambridge, 2005. \MR{2150803}


\bibitem{GreKap} B.~Gr\'ebert, T.~Kappeler,  \emph{The defocusing NLS equation and its normal form}.
EMS Series of Lectures in Mathematics. European Mathematical Society (EMS), Z\"urich, 2014. x+166 pp. 

\bibitem{HHM} H.~Hanche-Olsen, H.~Holden, E.~Malinnikova, \emph{An improvement of the Kolmogorov--Riesz compactness theorem}, Expo. Math. 37 (2019), no. 1, 84--91. 

\bibitem{Hayman1}
W.~K. Hayman and P.~B. Kennedy, \emph{Subharmonic functions. {V}ol. {I}},
Academic Press [Harcourt Brace Jovanovich, Publishers], London-New York,
1976, London Mathematical Society Monographs, No. 9. \MR{0460672}


	\bibitem{Hoermander1}
	L.~H\"{o}rmander, \emph{The analysis of linear partial differential operators.
		{I}}, Classics in Mathematics, Springer-Verlag, Berlin, 2003, Distribution
	theory and Fourier analysis, Reprint of the second (1990) edition [Springer,
	Berlin; MR1065993 (91m:35001a)]. \MR{1996773}
	
\bibitem{KapPos}
T.~Kappeler, J.~P\"oschel, \emph{KdV \& KAM}. Ergebnisse der Mathematik und ihrer Grenzgebiete. 3. Folge. A Series of Modern Surveys in Mathematics [Results in Mathematics and Related Areas. 3rd Series. A Series of Modern Surveys in Mathematics], 45. Springer-Verlag, Berlin, 2003. xiv+279 pp. ISBN: 3-540-02234-1 

\bibitem{KapSchTop} T.~Kappeler, B.~Schaad, P.~Topalov, \emph{Asymptotics of spectral quantities of Zakharov-Shabat operators}, J. Differential Equations 265 (2018), no. 11, 5604--5653. 

	\bibitem{Lev89}
	B.~Ya. Levin, \emph{Majorants in classes of subharmonic functions}, Teor. Funktsii
	Funktsional. Anal. i Prilozhen. (1989), no.~51, 3--17. \MR{1009141}

	\bibitem{Lev89Par3}
	B.~Ya. Levin, \emph{Classification of closed sets on {${\bf R}$} and representation
		of a majorant. {III}}, Teor. Funktsii Funktsional. Anal. i Prilozhen.
		(1989), no.~52, 21--33. \MR{1018327}

\bibitem{LevSar} B. M. Levitan, I. S. Sargsjan, \emph{Sturm-Liouville and Dirac operators}.
Translated from the Russian. Mathematics and its Applications (Soviet Series), 59. Kluwer Academic Publishers Group, Dordrecht, 1991. xii+350 pp. ISBN: 0-7923-0992-8 

	\bibitem{Martin41}
	R.~S. Martin, \emph{Minimal positive harmonic functions}, Trans. Amer. Math.
	Soc. \textbf{49} (1941), 137--172. \MR{3919}

\bibitem{PasturFigotin} L.~Pastur, A.~Figotin, \emph{Spectra of random and almost-periodic operators}, 
Grundlehren der Mathematischen Wissenschaften [Fundamental Principles of Mathematical Sciences], 297. Springer-Verlag, Berlin, 1992. viii+587 pp. ISBN: 3-540-50622-5 

\bibitem{Pego} R.~L.~Pego, \emph{Compactness in $L^2$ and the Fourier transform},
Proc. Amer. Math. Soc. 95 (1985), no. 2, 252--254. 


	\bibitem{RansPotential}
	T.~Ransford, \emph{Potential theory in the complex plane}, London Mathematical
	Society Student Texts, vol.~28, Cambridge University Press, Cambridge, 1995.
	\MR{1334766}
%
\bibitem{RS2} M.~Reed, B.~Simon, \emph{Methods of modern mathematical physics. II. Fourier analysis, self-adjointness}, Academic Press [Harcourt Brace Jovanovich, Publishers], New York-London, 1975. xv+361 pp. 


\bibitem{Savin} A. V. Savin, \emph{The number of rotations for almost periodic Dirac operators. (Russian)}
Vestnik Moskov. Univ. Ser. I Mat. Mekh. 1986, no. 1, 62--65, 93. 

\bibitem{Shubin} M.~A.~\v Subin, \emph{The density of states of selfadjoint elliptic operators with almost periodic coefficients. (Russian)} Trudy Sem. Petrovsk. No. 3 (1978), 243--275. 


	\bibitem{Simon07}
	B.~Simon, \emph{Equilibrium measures and capacities in spectral theory}, Inverse
	Probl. Imaging \textbf{1} (2007), no.~4, 713--772. \MR{2350223}
%
	\bibitem{Simon09}
	B.~Simon, \emph{Regularity and the {C}es\`aro-{N}evai class}, J. Approx. Theory
	\textbf{156} (2009), no.~2, 142--153. \MR{2494549}
	

	\bibitem{StahlTotik92}
	H.~Stahl and V.~Totik, \emph{General orthogonal polynomials}, Encyclopedia of
	Mathematics and its Applications, vol.~43, Cambridge University Press,
	Cambridge, 1992. \MR{1163828}

\bibitem{PosTru}
J.~P\"oschel, E.~Trubowitz, \emph{Inverse spectral theory}, Pure and Applied Mathematics, 130. Academic Press, Inc., Boston, MA, 1987. x+192 pp. ISBN: 0-12-563040-9.

\bibitem{PSV} G.~Palatucci, O.~Savin, E.~Valdinoci, \emph{Local and global minimizers for a variational energy involving a fractional norm}, Ann. Mat. Pura Appl. (4) 192 (2013), no. 4, 673--718.

	\bibitem{Ullman72}
	J.~L. Ullman, \emph{On the regular behaviour of orthogonal polynomials}, Proc.
	London Math. Soc. (3) \textbf{24} (1972), 119--148. \MR{0291718}

\end{thebibliography}

\providecommand{\MR}[1]{}
\providecommand{\bysame}{\leavevmode\hbox to3em{\hrulefill}\thinspace}
\providecommand{\MR}{\relax\ifhmode\unskip\space\fi MR }
\providecommand{\MRhref}[2]{%
	\href{http://www.ams.org/mathscinet-getitem?mr=#1}{#2}
}
\providecommand{\href}[2]{#2}

\end{document}